\documentclass[12pt, oneside]{amsart}

\usepackage[utf8]{inputenc}
\usepackage[T1]{fontenc}

\usepackage{amssymb}
\usepackage{amsthm}
\usepackage{amsmath}
\usepackage[only,Yup]{stmaryrd} 
\usepackage{mathrsfs}    

\usepackage[a4paper,left=2cm,top=3cm,right=2cm,bottom=2cm]{geometry}

\usepackage[all,cmtip]{xy}

\usepackage{enumerate}

\usepackage{indentfirst}

\usepackage{hyperref}
\hypersetup{colorlinks=true,allcolors=black}  

\usepackage{arydshln}

\hyphenation{Birk-häu-ser Ma-the-ma-ti-sche Zeit-schrift}

\pagestyle{plain}


\DeclareMathOperator{\codim}{codim}
\DeclareMathOperator{\grad}{grad}

\DeclareMathOperator{\rank}{rank}
\DeclareMathOperator{\symrank}{symrank}

\DeclareMathOperator{\Zero}{Zero}
\DeclareMathOperator{\ric}{Ric}
\DeclareMathOperator{\ind}{ind}

\mathchardef\ordinarycolon\mathcode`\:
\mathcode`\:=\string"8000
\begingroup \catcode`\:=\active
  \gdef:{\mathrel{\mathop\ordinarycolon}}
\endgroup

\begin{document}

\title{Positively curved Killing foliations via deformations}

\author{Francisco C.~Caramello Jr.}
\address{Francisco C.~Caramello Jr., Departamento de Matemática, Universidade Federal de São Carlos, Rod.~Washington Luís, Km 235, 13565-905, São Carlos - SP, Brazil}
\email{franciscocaramello@dm.ufscar.br}
\thanks{The first author was supported by CAPES, Brazil.}

\author{Dirk Töben}
\address{Dirk Töben, Departamento de Matemática, Universidade Federal de São Carlos, Rod.~Washington Luís, Km 235, 13565-905, São Carlos - SP, Brazil}
\email{dirktoben@dm.ufscar.br}

\subjclass[2010]{53C12}

\theoremstyle{definition}
\newtheorem{example}{Example}[section]
\newtheorem{definition}[example]{Definition}
\newtheorem{remark}[example]{Remark}
\theoremstyle{plain}
\newtheorem{proposition}[example]{Proposition}
\newtheorem{theorem}[example]{Theorem}
\newtheorem{lemma}[example]{Lemma}
\newtheorem{corollary}[example]{Corollary}
\newtheorem{claim}[example]{Claim}
\newtheorem{thmx}{Theorem}
\renewcommand{\thethmx}{\Alph{thmx}} 
\newtheorem{corx}{Corollary}
\renewcommand{\thecorx}{\Alph{corx}} 

\newcommand{\dif}[0]{\mathrm{d}}
\newcommand{\od}[2]{\frac{\dif #1}{\dif #2}}
\newcommand{\pd}[2]{\frac{\partial #1}{\partial #2}}
\newcommand{\dcov}[2]{\frac{\nabla #1}{\dif #2}}
\newcommand{\proin}[2]{\left\langle #1, #2 \right\rangle}
\newcommand{\f}[0]{\mathcal{F}}
\newcommand{\g}[0]{\mathcal{G}}
\newcommand\blfootnote[1]{%
  \begingroup
  \renewcommand\thefootnote{}\footnote{#1}%
  \addtocounter{footnote}{-1}%
  \endgroup
}

\begin{abstract}
We show that a compact manifold admitting a Killing foliation with positive transverse curvature fibers over finite quotients of spheres or weighted complex projective spaces, provided that the singular foliation defined by the closures of the leaves has maximal dimension. This result is obtained by deforming the foliation into a closed one while maintaining transverse geometric properties, which allows us to apply results from the Riemannian geometry of orbifolds to the space of leaves. We also show that the basic Euler characteristic is preserved by such deformations. Using this fact we prove that a Riemannian foliation of a compact manifold with finite fundamental group and nonvanishing Euler characteristic is closed. As another application we obtain that, for a positively curved Killing foliation of a compact manifold, if the structural algebra has sufficiently large dimension then the basic Euler characteristic is positive.
\end{abstract}

\maketitle
\setcounter{tocdepth}{1}
\tableofcontents

\section{Introduction}

The symmetry rank of a Riemannian manifold is the rank of its isometry group, that is, the dimension of a maximal torus in $\mathrm{Iso}(M)$. It was proved by K.~Grove and C.~Searle in \cite{grove} that a closed Riemannian manifold with positive sectional curvature and maximal symmetry rank is diffeomorphic to a sphere, a real or complex projective space or a lens space. A generalization of this result for Alexandrov spaces was obtained recently in \cite{harvey}. Here we show the following transverse analog of these results for Killing foliations, that is, complete Riemannian foliations with globally constant Molino sheaf (see Section \ref{subsection: Molino Sheaf and Killing Foliations}). This class of foliations includes Riemannian foliations on complete simply-connected manifolds and foliations induced by isometric actions on complete manifolds \cite[Lemme III]{molino3}.

\begin{thmx}\label{theorem: harvey-searle para folheações} 
Let $\f$ be a $q$-codimensional, transversely orientable Killing foliation of a compact manifold $M$. If the transverse sectional curvature $\sec_{\f}$ of $\f$ is positive, then
$$\dim(\overline{\f})-\dim(\f)\leq\left\lfloor\frac{\codim(\f)+1}{2}\right\rfloor$$
where $\overline{\f}$ denotes the singular Riemannian foliation of $M$ by the closures of the leaves of $\f$. Moreover, if equality holds above, there is a closed Riemannian foliation $\g$ of $M$ arbitrarily close to $\f$ and such that $M/\g$ is homeomorphic to either
\begin{enumerate}[(i)]
\item a quotient of a sphere $\mathbb{S}^q/\Lambda$, where $\Lambda$ is a finite subgroup of the centralizer of the maximal torus in $\mathrm{O}(q+1)$, or
\item a quotient of a weighted complex projective space $|\mathbb{CP}^{q/2}[\lambda]|/\Lambda$, where $\Lambda$ is a finite subgroup of the torus acting linearly on $\mathbb{CP}^{q/2}[\lambda]$ (this case occurs only when $q$ is even).
\end{enumerate}
\end{thmx}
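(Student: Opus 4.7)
My plan is to reduce to the Riemannian orbifold setting: approximate $\f$ by a closed Riemannian foliation $\g$ of the same codimension and with positive transverse curvature, then invoke the orbifold version of the Grove--Searle maximal symmetry rank theorem on the leaf space $M/\g$. As a preliminary step I would translate the desired inequality into a statement about the structural algebra: since $\f$ is Killing, its Molino sheaf is globally constant and determined by a finite-dimensional abelian Lie algebra $\mathfrak{a}$ of transverse Killing fields whose orbits in local transversals coincide with the leaf closures of $\overline{\f}$. Hence $\dim(\overline{\f})-\dim(\f)=\dim(\mathfrak{a})$ at a regular leaf, and the bound to be shown becomes $\dim(\mathfrak{a})\leq\lfloor(q+1)/2\rfloor$.

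Next I would apply the deformation procedure developed earlier in the paper, perturbing a basis of $\mathfrak{a}$ to one whose flows are periodic with commensurable periods (that is, making the structural algebra ``rational''), to obtain a Riemannian foliation $\g$ of the same dimension and codimension as $\f$, arbitrarily $C^\infty$-close to $\f$ and with closed leaves. Because positivity of transverse sectional curvature is an open condition on the foliation data, $\g$ can be chosen so that its transverse curvature is still positive; then $M/\g$ is a compact Riemannian orbifold of dimension $q$ with positive sectional curvature on which a torus $T^{\dim\mathfrak{a}}$ (the closed-up structural algebra) acts effectively by isometries.

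Finally I would apply the orbifold/Alexandrov generalization of Grove--Searle due to Harvey--Searle \cite{harvey} to $M/\g$: since its symmetry rank is at least $\dim(\mathfrak{a})$, this yields both $\dim(\mathfrak{a})\leq\lfloor(q+1)/2\rfloor$ and, when equality holds, the identification of $M/\g$, up to homeomorphism, as $\mathbb{S}^q/\Lambda$ or, when $q$ is even, $|\mathbb{CP}^{q/2}[\lambda]|/\Lambda$, with $\Lambda$ a finite subgroup as in the statement.

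The \emph{main obstacle} is the deformation step: the perturbation of $\mathfrak{a}$ must stay within the class of Killing foliations, preserve the leaf dimension and codimension, be small enough in a sufficiently strong topology to preserve positivity of transverse sectional curvature, and realize a torus action of full rank $\dim(\mathfrak{a})$ (rather than lower rank, which would weaken the Harvey--Searle bound) on $M/\g$. Showing that all these requirements can be arranged simultaneously, and that positive sectional curvature genuinely descends to the quotient orbifold in the required form, is the technical heart of the argument and depends crucially on the deformation machinery established earlier in the paper.
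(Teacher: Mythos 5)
Your proposal follows essentially the same route as the paper: deform $\f$ into a nearby closed Riemannian foliation $\g$ with $\sec_\g>0$ via the deformation theorem built on Haefliger--Salem (Theorem \ref{theorem: Haefliger deformation}), which equips the compact, positively curved $q$-dimensional orbifold $M/\!/\g$ with an effective isometric action of $\mathbb{T}^d$, $d=\dim(\overline{\f})-\dim(\f)$, and then invoke the Harvey--Searle orbifold version of Grove--Searle. The only step you leave implicit is where transverse orientability enters: the paper pushes a basic orientation form through the deformation to conclude that $M/\!/\g$ is orientable, hence has no codimension-one strata and so is boundaryless as an Alexandrov space, which is needed to apply \cite[Corollary E]{harvey}.
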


Recall that a generalized Seifert fibration is a foliation $\g$ whose leaves are all closed submanifolds. In this case we also say that $\g$ is a \textit{closed} foliation. The number $d=\dim(\overline{\f})-\dim(\f)$, where $\dim(\overline{\f})$ is the maximal dimension of the leaf closures of $\f$, is called the \textit{defect} of $\f$ and equals the dimension of the structural algebra $\mathfrak{g}_\f$ of $\f$ (see Section \ref{subsection: Molino Theory}). When $\f$ is a Killing foliation this number is also a lower bound for the symmetry rank of $\f$, that is, the maximal number of linearly independent, commuting transverse Killing vector fields that $\f$ admits. Theorem \ref{theorem: harvey-searle para folheações} has a direct application for Riemannian foliations on positively curved manifolds (see Corollary \ref{Corollary to H-S for foliations}).

We prove Theorem \ref{theorem: harvey-searle para folheações} by reducing it to application of the orbifold version of the aforementioned Grove--Searle classification \cite[Corollary E]{harvey}, after deforming $\f$ into a closed Riemannian foliation $\g$ while maintaining the relevant transverse geometric properties, using the result below. In what follows $\mathcal{T}^*(\f)$ denotes the algebra of $\f$-basic tensor fields (see Section \ref{section: basic cohomology}).

\begin{thmx}\label{theorem: Haefliger deformation}
Let $(\f,\mathrm{g}^T)$ be a Killing foliation of a compact manifold $M$. There is a homotopic deformation $\f_t$ of $\f=\f_0$ into a closed foliation $\g=\f_1$ such that
\begin{enumerate}[(i)]
\item the deformation occurs within the closures of the leaves of $\f$ and $\g$ can be chosen arbitrarily close to $\f$,
\item for each $t$ there is an injection $\iota_t:\mathcal{T}^*(\f)\to\mathcal{T}^*(\f_t)$ such that $\iota_t(\xi)$ is a smooth time-dependent tensor field on $M$, for any $\xi\in\mathcal{T}^*(\f)$, and $\iota_t(\mathrm{g}^T)$ is a transverse metric for $\f_t$,
\item the quotient orbifold $M/\!/\g$ admits a smooth effective action of a torus $\mathbb{T}^d$, where $d$ is the defect of $\f$, which is isometric with respect to the metric induced by $\iota_t(\mathrm{g}^T)$ and such that $M/\overline{\f}\cong(M/\g)/\mathbb{T}^d$.
\end{enumerate}
In particular, $\symrank(\g)\geq d$ and, if $\sec_{\f}> 0$ and $\g$ sufficiently close to $\f$, then $\sec_{\g}> 0$.
\end{thmx}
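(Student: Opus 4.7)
Our approach is to apply Molino's structure theory and perform the deformation by perturbing the flow of the abelian structural algebra $\mathfrak{g}_\f$, replacing its generically dense action within its closure torus by a rational (closed) one. The guiding picture is the elementary deformation of a linear foliation of $T^2$ of irrational slope $\alpha$ into a foliation of nearby rational slope $p/q$, which amounts to tilting the tangent line $\partial_x + \alpha\, \partial_y$ by a small multiple of the structural Killing field $\partial_y$.

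I would first fix a basis $X_1, \ldots, X_d$ of $\mathfrak{g}_\f$, where $d$ is the defect. Since $\f$ is Killing, these are commuting global transverse Killing fields whose flows preserve $\f$, and together with $T\f$ they span the singular distribution $T\overline{\f}$. Lifting to the transverse orthonormal frame bundle $\hat M$, they become honest Killing vector fields, and the closure of the resulting $\mathbb{R}^d$-action on $\hat M$ is a torus $\mathbb{T}^d$ (of dimension $d$, by the defect condition) whose orbits sweep out the leaf closures of the lifted foliation $\hat\f$. Density of rational Lie subalgebras of $\mathfrak{t}^d$ (those whose exponential is a closed subtorus) allows me to choose a smooth family $\{\xi_i(t)\}_{t \in [0,1]}$ of elements of $\mathfrak{t}^d$ with $\xi_i(0) = X_i$, such that the $\xi_i(1)$ generate a closed subgroup of $\mathbb{T}^d$, arbitrarily close to the original configuration.

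The deformed foliation $\hat\f_t$ on $\hat M$ is then defined by tilting $T\hat\f$ using the difference $\xi_i(t) - X_i$: the new rank-$p$ distribution is spanned locally by frames of $T\hat\f$ modified by suitable linear combinations of these correction fields. Integrability follows from commutativity of the $\xi_i(t)$, reflecting the abelian structure of $\mathfrak{g}_\f$; and the construction is $\mathrm{O}(q)$-equivariant, descending to a smooth family of foliations $\f_t$ on $M$. The three properties then follow: (i) the deformation uses only flows of elements of $\mathfrak{g}_\f$, whose orbits lie within leaf closures, and $\g = \f_1$ can be made arbitrarily $C^\infty$-close to $\f$ since $\xi_i(1) - X_i$ is small; (ii) an $\f$-basic tensor on $M$ (equivalently, one constant on leaf closures of $\f$, by the Killing condition) is automatically constant on the smaller $\f_t$-leaf closures, so one obtains a natural inclusion $\iota_t$, smooth in $t$, and $\iota_t(\mathrm{g}^T)$ remains nondegenerate on $N\f_t$ for small perturbations by continuity; (iii) the ambient torus preserves $\g$ and descends to a smooth isometric $\mathbb{T}^d$-action on the orbifold $M/\!/\g$, effective after possibly quotienting out a finite ineffective kernel, with quotient $M/\overline{\f}$.

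The main obstacle I anticipate is making the construction of $\f_t$ rigorous at the global level on $M$, and in particular verifying integrability of the tilted distribution and its equivariant descent from $\hat M$; a secondary technical point is a globally well-defined definition of $\iota_t$, not merely as a local identification. The final claim, that positive transverse curvature is preserved for $\g$ sufficiently close to $\f$, then follows from continuity of $\sec_{\f_t}$ in $t$ under the smooth family $\iota_t(\mathrm{g}^T)$ together with openness of positivity on the compact quotient.
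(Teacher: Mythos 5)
Your guiding picture (perturb a dense subalgebra to a nearby rational one, as for the Kronecker foliations) is indeed the engine of the actual proof, but the place where you run it is wrong, and this is not a technicality. The transverse action of $\mathfrak{g}_\f$ does not in general integrate to an honest torus action on $M$, nor on the frame bundle $M^\Yup_\f$: the elements of $\mathscr{C}_\f(M)$ are \emph{transverse} fields, defined only modulo $\mathfrak{X}(\f)$, and while each representative in $\mathfrak{L}(\f)$ has a flow, these flows need not commute nor generate a precompact subgroup of diffeomorphisms. Consequently the ``tilting'' of $T\f$ is not defined: in the Kronecker model the expression $\partial_x+\alpha\partial_y$ uses the global product structure of $\mathbb{T}^2$ to single out a linear map from the leaf direction to the Killing direction, and no such map exists for a general Killing foliation; the splitting $T\overline{\f}=T\f\oplus\langle X_1,\dots,X_d\rangle$ gives you a complement, not a way to rotate $T\f$ inside $T\overline{\f}$, and integrability of the would-be tilted distribution cannot be checked by ``commutativity of the $\xi_i(t)$''. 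This is precisely the gap that the Haefliger--Salem theorem (Theorem \ref{theorem: Haefliger-Salem}) is designed to fill: the torus $\mathbb{T}^N$ acts honestly only on a classifying orbifold $\mathcal{O}$, with $\f=\Upsilon^*(\f_H)$ for a dense contractible $H<\mathbb{T}^N$ acting locally freely, and the deformation is performed on $\mathrm{Gr}^{\dim\mathfrak{h}}(\mathrm{Lie}(\mathbb{T}^N))$ there and then pulled back through $\Upsilon$. (Note also that the relevant torus has dimension $N=\dim\f_H+d$, not $d$; only the quotient $\mathbb{T}^N/K$ acting on $M/\!/\g$ is $d$-dimensional.)

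The second genuine error is your construction of $\iota_t$. An $\f$-basic tensor is \emph{not} automatically $\f_t$-basic: already for the Kronecker foliations, $dy-\alpha\,dx$ is basic for slope $\alpha$ and fails both defining conditions for every nearby slope; nor are basic tensors of positive rank characterized by being ``constant on leaf closures''. So there is no natural inclusion, and $\iota_t$ must genuinely modify the tensor --- it is an injection, not an inclusion. In the paper this is done by fixing a $\mathbb{T}^N$-invariant complement $\nu\f_H\subset T\mathcal{O}$, declaring $\xi_K:=\xi_H$ on $\nu\f_H$ and $\xi_K:=0$ on $T\g_K$, and pulling back by $\Upsilon$; positivity of $\iota_t(\mathrm{g}^T)$ on the normal bundle of $\f_t$ is then built into the construction rather than obtained ``by continuity''. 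Your final continuity argument for $\sec_\g>0$ is fine once the smooth family $(\f_t,\iota_t(\mathrm{g}^T))$ actually exists, but both of the points above must be repaired before that stage is reached.
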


The fact that $\iota_t(\mathrm{g}^T)$ is a transverse metric for $\f_t$ is actually a particular case of a stronger property of the deformation. In fact, $\iota(\xi)$ essentially consists of a deformation of the kernel of $\xi$, so for any transverse geometric structure defined by a tensor field $\sigma\in\mathcal{T}^*(\f)$ (such as a transverse Riemannian metric, a basic orientation form, a transverse symplectic structure etc), $\iota_t(\sigma)$ will be a transverse geometric structure of the same kind for $\f_t$. In particular, $\iota_t$ restricts to an injection $\Omega_B^*(\f)\to\Omega_B^*(\f_t)$ between the algebras of basic differential forms. It is also worth to mention that, if $\pi:M\to M/\!/\g$ denotes the canonical projection, $\pi_*\circ\iota$ is an isomorphism between $\mathcal{T}^*(\f)$ and the subalgebra $\mathcal{T}^*(M/\!/\g)^{\mathbb{T}^d}$ of $\mathbb{T}^d$-invariant tensor fields on $M/\!/\g$ that, similarly, relates $\f$-transverse geometric structures to $\mathbb{T}^d$-invariant geometric structures on $M/\!/\g$.

The proof of Theorem \ref{theorem: Haefliger deformation} is based on a result by A.~Haefliger and E.~Salem \cite{haefliger2} that expresses $\f$ as the pullback of a homogeneous foliation of an orbifold. As Theorem \ref{theorem: harvey-searle para folheações} exemplifies, with this technique we are able to reduce results about the transverse geometry of Killing foliations to theorems about Riemannian orbifolds. Another application is the following ``closed leaf'' result, that complements a theorem by G.~Oshikiri \cite[Proposition and Theorem 2]{oshikiri} by allowing noncompact ambient manifolds. We show it by reducing to an application of the orbifold version of the Synge--Weinstein Theorem.

\begin{thmx}\label{theorem: Berger for foliations}
Let $\f$ be an even-codimensional, complete Riemannian foliation of a manifold $M$ with $|\pi_1(M)|<\infty$. If $\sec_{\f}\geq c>0$ then $\f$ possesses a closed leaf.
\end{thmx}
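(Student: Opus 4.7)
The plan is to reduce to a Killing foliation on the universal cover, apply Theorem \ref{theorem: Haefliger deformation} to produce a compact leaf orbifold with an isometric torus action of positive sectional curvature, and invoke an orbifold version of Berger's theorem to locate a torus fixed point that pulls back to a closed leaf.

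First, since $|\pi_1(M)|<\infty$, the universal Riemannian covering $\tilde p\colon\tilde M\to M$ is finite. The lifted foliation $\tilde{\f}$ is then a complete Riemannian foliation of $\tilde M$ of the same even codimension $q$ with $\sec_{\tilde{\f}}\geq c>0$, and is Killing because $\tilde M$ is simply connected, by \cite[Lemme III]{molino3}. A finite covering is a closed map, so a closed leaf of $\tilde{\f}$ projects to a closed leaf of $\f$; thus it suffices to produce a closed leaf of $\tilde{\f}$. The transverse Myers theorem applied to $\sec_{\tilde{\f}}\geq c>0$ bounds the diameter of the leaf-closure space $\tilde M/\overline{\tilde{\f}}$, which suffices to make the Haefliger--Salem classifying orbifold underlying Theorem \ref{theorem: Haefliger deformation} compact; the argument below is written as though $\tilde M$ itself were compact, and the noncompact case reduces to this via the transverse model.

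Apply Theorem \ref{theorem: Haefliger deformation} to $\tilde{\f}$: there is a closed Riemannian foliation $\g$ of $\tilde M$ arbitrarily close to $\tilde{\f}$, with $\sec_\g>0$, such that the leaf orbifold $O=\tilde M/\!/\g$ is a compact Riemannian orbifold of dimension $q$ equipped with a smooth effective isometric action of a torus $\mathbb{T}^d$, where $d$ is the defect of $\tilde{\f}$, and $\tilde M/\overline{\tilde{\f}}\cong O/\mathbb{T}^d$. If $d=0$ then $\tilde{\f}=\g$ is already closed and any leaf of $\tilde{\f}$ is a closed leaf. Otherwise, invoke an orbifold analogue of Berger's theorem: on a compact, even-dimensional, positively curved Riemannian orbifold every Killing vector field vanishes at some point. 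The classical argument (minimize $|X|^2$, observe that a skew-symmetric endomorphism of the odd-dimensional subspace $X(p)^\perp$ has a nontrivial kernel, then contradict the Hessian formula via positive curvature) transfers equivariantly to orbifold charts. By induction on $d$, choose a circle $S^1\subset\mathbb{T}^d$; its fixed suborbifold $O^{S^1}$ is nonempty, totally geodesic, positively curved, and of even dimension, since the nontrivial summands of the $S^1$-isotropy representation on normal directions are two-dimensional. The residual $\mathbb{T}^{d-1}$-action on $O^{S^1}$ then has a common fixed point by the induction hypothesis, yielding a point $p\in O$ fixed by all of $\mathbb{T}^d$.

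To conclude, let $L_\g$ be the leaf of $\g$ lying over $p$. Its $\mathbb{T}^d$-invariance combined with Theorem \ref{theorem: Haefliger deformation}(iii), which identifies $\mathbb{T}^d$-orbits on $O$ with leaves of $\overline{\tilde{\f}}$, shows that $L_\g$ is a leaf of $\overline{\tilde{\f}}$. Any leaf $L$ of $\tilde{\f}$ inside $L_\g$ has $\dim L=\dim\tilde{\f}=\dim\g=\dim L_\g$, so $L$ is open in $L_\g$ and dense in $\overline L=L_\g$; being a connected, open, dense submanifold of the connected manifold $L_\g$ of the same dimension, it must equal $L_\g$, so $L=\overline L$ is closed in $\tilde M$, and $\tilde p(L)$ is the desired closed leaf of $\f$. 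The main obstacle is the orbifold Berger step together with the inductive torus argument: one must verify that the classical proof survives at orbifold singularities (by lifting $X$ to an orbifold chart and working equivariantly) and that each circle-fixed suborbifold retains both even dimension and positive curvature so the induction on $d$ closes. A secondary issue is the compactness caveat in applying Theorem \ref{theorem: Haefliger deformation}, which is handled by the compact classifying orbifold available from the transverse Myers bound.
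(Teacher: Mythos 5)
The central gap is your reduction to the compact case. Theorem \ref{theorem: Berger for foliations} is deliberately stated for a possibly noncompact $M$ (the introduction advertises it as complementing Oshikiri's result precisely by allowing noncompact ambient manifolds), and $|\pi_1(M)|<\infty$ only makes $\widehat{M}\to M$ a finite covering; it does not make $\widehat{M}$ compact. Theorem \ref{theorem: Haefliger deformation}, both as stated and as proved, requires a compact manifold, so you cannot apply it to $\widehat{\f}$ directly, and your sentence ``the noncompact case reduces to this via the transverse model'' is exactly the step that needs an argument rather than an assertion. The paper's resolution is to argue by contradiction: assuming $\f$ (hence $\widehat{\f}$, by Proposition \ref{proposition: Molino sheaf under liftings}) has no closed leaf, Hebda's Theorem \ref{theorem: hebda} makes $\widehat{M}/\overline{\widehat{\f}}$ compact, so the holonomy pseudogroup of $\widehat{\f}$ is a complete pseudogroup of local isometries with compact space of orbit closures, and it is $1$-connected because $\pi_1(\widehat{M})$ surjects onto $\pi_1(\mathscr{H}_{\widehat{\f}})$; then \cite[Theorem 3.7]{haefliger2} realizes this pseudogroup as the holonomy pseudogroup of a Riemannian foliation $\f'$ of a \emph{compact simply-connected} manifold $M'$, still Killing, with $\sec_{\f'}\geq c>0$ and without closed leaves, and only then is Theorem \ref{theorem: Haefliger deformation} applied, to $(M',\f')$. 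You correctly identify the Hebda/transverse Myers bound as the relevant input, but without the realization step (or a proof that Theorem \ref{theorem: Haefliger deformation} extends to complete Killing foliations with compact leaf-closure space) your argument only covers compact $M$.

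Granting compactness, your endgame is sound but runs in the opposite direction from the paper's. The paper keeps the contradiction hypothesis: since $\f'$ has no closed leaves the $\mathbb{T}^d$-action on the compact, even-dimensional, oriented, positively curved orbifold $M'/\!/\g'$ has empty fixed-point set, a circle $\mathbb{S}^1<\mathbb{T}^d$ with the same (empty) fixed-point set is chosen via \cite[Lemma 4.2.1]{allday}, and this yields a fixed-point-free isometry contradicting the Synge--Weinstein theorem for orbifolds \cite[Theorem 2.3.5]{yeroshkin}. You instead produce a $\mathbb{T}^d$-fixed point directly via an orbifold Berger theorem and induction on $d$ over circle-fixed suborbifolds; this is the standard dual argument, and in exchange for having to write out (or locate a reference for) the orbifold Berger statement, it dispenses with the orientability of the quotient orbifold that the paper must secure from simple-connectedness of $M'$. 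Your extraction of a closed leaf from the fixed point, using $M/\overline{\f}\cong(M/\g)/\mathbb{T}^d$ and $\dim\g=\dim\f$, is correct and is the same mechanism the paper uses elsewhere (e.g.\ in Proposition \ref{proposition: dim g max then isolated closed leaves}).
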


For any smooth foliation $(M,\f)$ one can define the cohomology of the differential subcomplex $\Omega_B^*(\f)\subset \Omega^*(M)$, the basic cohomology of $\f$ (see Section \ref{section: basic cohomology}). We study the basic Euler characteristic $\chi_B(\f)$, the alternate sum of the dimensions of the basic cohomology groups, when $\f$ is a Killing foliation. We show that in this case this invariant localizes to the set $\Sigma^{\dim\f}$ that consists of the union of the closed leaves of $\f$.

\begin{thmx}\label{theorem: basic euler char localizes to closed leaves}
If $\f$ is a Killing foliation of a compact manifold $M$, then $\chi_B(\f)=\chi(\Sigma^{\dim\f}/\f)$.
\end{thmx}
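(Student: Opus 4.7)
The plan is to reduce the statement to an equivariant Euler characteristic localization on the quotient orbifold of a closed deformation of $\f$.

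First, I will apply Theorem~\ref{theorem: Haefliger deformation} to produce a closed Riemannian foliation $\g$ arbitrarily close to $\f$, such that $M/\!/\g$ is a compact Riemannian orbifold carrying a smooth isometric action of a torus $\mathbb{T}^d$ (with $d$ the defect of $\f$) whose orbit space is $M/\overline{\f}$. By the deformation-invariance of the basic Euler characteristic (announced in the abstract), $\chi_B(\f)=\chi_B(\g)$. Since $\g$ is closed on a compact manifold, its basic cohomology is canonically isomorphic to the de Rham cohomology of the orbifold $M/\!/\g$, whence $\chi_B(\g)=\chi(M/\!/\g)$.

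Second, I will apply the fixed point localization for the Euler characteristic to the $\mathbb{T}^d$-action on $M/\!/\g$. On smooth manifolds this is classical; on orbifolds it follows, for instance, from the existence of a $\mathbb{T}^d$-invariant Morse--Bott function whose critical set is exactly the fixed point set, or from orbifold equivariant cohomology. This yields $\chi(M/\!/\g)=\chi((M/\!/\g)^{\mathbb{T}^d})$.

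Third, I will identify $(M/\!/\g)^{\mathbb{T}^d}$ with $\Sigma^{\dim\f}/\f$. A $\g$-leaf $L$ determines a $\mathbb{T}^d$-fixed point of $M/\!/\g$ iff its $\mathbb{T}^d$-orbit is a single point; under the isomorphism $(M/\!/\g)/\mathbb{T}^d\cong M/\overline{\f}$ this is equivalent to the $\overline{\f}$-leaf containing $L$ coinciding with $L$ itself, i.e.\ to that $\overline{\f}$-leaf having the same dimension as a leaf of $\f$. Because the deformation of Theorem~\ref{theorem: Haefliger deformation} takes place inside the leaf closures of $\f$ and the structural algebra acts trivially on any closure that is already a closed $\f$-leaf, the deformation fixes $\Sigma^{\dim\f}$ leafwise, so $(M/\!/\g)^{\mathbb{T}^d}$ is naturally identified with $\Sigma^{\dim\f}/\f$. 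Combining the three steps gives the theorem.

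The main obstacle I expect is the last identification: one must carefully verify that the deformation restricts to the identity on $\Sigma^{\dim\f}$, so that closed $\f$-leaves correspond bijectively to $\mathbb{T}^d$-fixed $\g$-leaves, and formulate the Euler characteristic localization with enough generality to apply to the compact Riemannian orbifold $M/\!/\g$, whose isotropy may be nontrivial.
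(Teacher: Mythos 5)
Your argument is circular as it stands. The deformation-invariance $\chi_B(\f)=\chi_B(\g)$ that you take as an input (citing the abstract) is Theorem~\ref{theorem: basic euler char is preserved by deformations} of the paper, and its proof there is precisely your chain of equalities read in the opposite direction: $\chi_B(\g)=\chi(M/\g)=\chi\bigl((M/\g)^{\mathbb{T}^d}\bigr)=\chi(\Sigma^{\dim\f}/\f)=\chi_B(\f)$, where the \emph{last} equality is supplied by the localization result you are trying to prove. The paper explicitly derives deformation-invariance \emph{from} Theorem~\ref{theorem: basic euler char localizes to closed leaves}, not the other way around, so you cannot invoke it without an independent proof. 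Such a proof is not easy to supply: $\iota_t$ is only an injection $\Omega_B^*(\f)\hookrightarrow\Omega_B^*(\f_t)$, and the individual basic Betti numbers are \emph{not} preserved by the deformation (as the paper notes just before Theorem~\ref{theorem: basic euler char is preserved by deformations}), so no continuity or comparison-of-complexes argument yields $\chi_B(\f)=\chi_B(\g)$ directly.

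The missing idea is the mechanism the paper actually uses. Proposition~\ref{prop: X killing com Zero=Sigma dim f} produces a transverse Killing vector field $\overline{X}\in\mathscr{C}_\f(M)$ with $\Zero(\overline{X})=\Sigma^{\dim\f}$, and Theorem~\ref{theo: localization basic euler char} localizes $\chi_B(\f)$ to $\Zero(\overline{X})$ by constructing an $\f$-nondegenerate foliate field $W$ (a combination of a representative of $\overline{X}$, gradients of basic Bott--Morse functions on the components $N_i$ of $\Zero(\overline{X})$, and gradients of the squared distance functions $d_{N_i}^2$) and applying the basic Hopf index theorem of Belfi--Park--Richardson, matching the indices and orientation bundles of $W$ on $M$ with those of $W|_{N_i}$ on $N_i$. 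The identity $\chi_B(\f|_{\Sigma^{\dim\f}})=\chi(\Sigma^{\dim\f}/\f)$ then follows from Proposition~\ref{prop: basic cohomology of closed foliations} and Satake's theorem. Your remaining steps --- the identification $(M/\g)^{\mathbb{T}^d}=\Sigma^{\dim\f}/\f$ and the topological localization $\chi(M/\g)=\chi\bigl((M/\g)^{\mathbb{T}^d}\bigr)$, the latter more safely obtained from Bredon's theorem on continuous torus actions than from a Morse--Bott argument on an orbifold --- are sound, but they are exactly the steps the paper uses \emph{afterwards} to deduce deformation-invariance from the theorem, not to prove the theorem itself.
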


Equivalently, using the language of transverse actions (see Section \ref{subsection: Molino Sheaf and Killing Foliations}), $\f|_{\Sigma^{\dim\f}}$ coincides with $\f^{\mathfrak{g}_\f}$, the fixed-point set of the action of the structural algebra $\mathfrak{g}_\f$ on $\f$ (see Section \ref{subsection: Molino Theory}), so the formula in Theorem \ref{theorem: basic euler char localizes to closed leaves} becomes $\chi_B(\f)=\chi_B(\f^{\mathfrak{g}_\f})$, in analogy with the localization of the classical Euler characteristic to the fixed-point set of a torus action.

Theorem \ref{theorem: basic euler char localizes to closed leaves} enables us to show that the basic Euler characteristic is preserved by the deformations devised in Theorem \ref{theorem: Haefliger deformation} (see Theorem \ref{theorem: basic euler char is preserved by deformations}). Using this fact we obtain the following transverse analog of the partial answer to Hopf's conjecture by T.~Püttmann and C.~Searle for manifolds with large symmetry rank \cite[Theorem 2]{puttmann}, by reducing it to an orbifold version of this same result that we also prove (see Corollary \ref{corollary: Putmann for orbifolds}).

\begin{thmx}\label{theorem: puttmann for foliations}
Let $\f$ be an even-codimensional, transversely orientable Killing foliation of a closed manifold $M$. If $\sec_\f>0$ and 
$$\dim(\overline{\f})-\dim(\f)\geq\frac{\codim(\f)}{4}-1,$$
then $\chi_B(\f)>0$.
\end{thmx}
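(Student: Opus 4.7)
The plan is to combine the deformation provided by Theorem \ref{theorem: Haefliger deformation} with the deformation invariance of the basic Euler characteristic (Theorem \ref{theorem: basic euler char is preserved by deformations}), thereby reducing the claim to an orbifold analogue of the Püttmann--Searle theorem applied to the leaf space of a nearby closed foliation. First, I would apply Theorem \ref{theorem: Haefliger deformation} to $\f$ to produce a closed Riemannian foliation $\g$ arbitrarily close to $\f$, such that the quotient orbifold $M/\!/\g$ admits a smooth, effective, isometric action of a torus $\mathbb{T}^d$, where $d = \dim(\overline{\f}) - \dim(\f)$ is the defect of $\f$. Because the injection $\iota_t$ sends $\f$-basic tensors to basic tensors of the same geometric type on $\f_t$, the transverse orientation form of $\f$ transports to a transverse orientation form for $\g$, so $\g$ is transversely orientable with even codimension $q = \codim(\f)$. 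Since $\sec_\f > 0$ and $\g$ can be chosen sufficiently close to $\f$, one also has $\sec_\g > 0$. Consequently, $M/\!/\g$ is a compact, even-dimensional, orientable Riemannian orbifold of positive sectional curvature whose isometry group has rank at least $d$.

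Next, I would invoke Theorem \ref{theorem: basic euler char is preserved by deformations} to obtain $\chi_B(\f) = \chi_B(\g)$. Since $\g$ is closed, its basic cohomology coincides with the de Rham cohomology of the orbifold $M/\!/\g$, so $\chi_B(\g)$ equals the orbifold Euler characteristic of $M/\!/\g$. The hypothesis $d \geq \codim(\f)/4 - 1$ translates to $d \geq \dim(M/\!/\g)/4 - 1$, which is exactly the symmetry-rank condition needed to apply the orbifold Püttmann--Searle result (Corollary \ref{corollary: Putmann for orbifolds}), yielding $\chi_B(\g) > 0$ and hence $\chi_B(\f) > 0$.

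The main obstacle I anticipate is establishing the orbifold version of Püttmann--Searle itself. The classical proof proceeds by analysing the fixed-point components of a large-rank subtorus of the isometry group and iterating positive-curvature rigidity arguments; transferring these to the orbifold setting requires a careful treatment of the isotropy stratification of the fixed-point set, of the induced orbifold structures on each stratum, and of the behaviour of the relevant connectedness principles at orbifold singular points. Granting that orbifold result, the remainder of the argument is essentially formal via the deformation of $\f$ into $\g$ and the invariance of $\chi_B$.
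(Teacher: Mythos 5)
Your proposal is correct and follows essentially the same route as the paper: deform $\f$ into a nearby closed foliation $\g$ via Theorem \ref{theorem: Haefliger deformation}, use Theorem \ref{theorem: basic euler char is preserved by deformations} to get $\chi_B(\f)=\chi_B(\g)$, and conclude by the Püttmann--Searle result for the positively curved leaf space with its $\mathbb{T}^d$-action. The only cosmetic difference is that the paper applies the closed-foliation form of that result (Lemma \ref{lemma: putmann searle for closed foliations}) directly to $\g$ rather than passing through the orbifold statement of Corollary \ref{corollary: Putmann for orbifolds}, which is equivalent via Proposition \ref{prop: basic cohomology of closed foliations} and Theorem \ref{theorem: Satake}.
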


Finally, we combine the results of A.~Haefliger on the classifying spaces of holonomy groupoids that appear in \cite{haefliger3} with the invariance of $\chi_B$ by deformations to obtain the following topological obstruction for Riemannian foliations.

\begin{thmx}\label{teo: topological obstruction}
Any Riemannian foliation of a compact manifold $M$ with $|\pi_1(M)|<\infty$ and $\chi(M)\neq0$ is closed.
\end{thmx}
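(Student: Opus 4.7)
The plan is to reduce to a simply-connected Killing setting, apply the deformation from Theorem \ref{theorem: Haefliger deformation} to obtain a closed foliation with the same basic Euler characteristic, and then invoke Haefliger's multiplicative formula from \cite{haefliger3} relating $\chi(M)$ to the typical leaf and to $\chi_B$. Starting from $|\pi_1(M)|<\infty$, I pass to the finite universal cover $\pi:\widetilde{M}\to M$, which is compact with $\chi(\widetilde{M})=|\pi_1(M)|\chi(M)\neq 0$. Leaf closures are local, so $\widetilde{\f}:=\pi^{*}\f$ is closed on $\widetilde{M}$ iff $\f$ is closed on $M$, and $\widetilde{\f}$ is automatically a Killing foliation since $\widetilde{M}$ is simply-connected. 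I may therefore assume $M$ itself is simply-connected and $\f$ is Killing. Then Theorem \ref{theorem: Haefliger deformation} produces a closed Riemannian foliation $\g$ from $\f$ by a deformation, and Theorem \ref{theorem: basic euler char is preserved by deformations} guarantees $\chi_B(\f)=\chi_B(\g)$.

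Next I would invoke Haefliger's theorem on classifying spaces of holonomy groupoids \cite{haefliger3}. Applied to the closed foliation $\g$---for which $M\to M/\g$ is (up to homotopy) a Seifert bundle over the compact orbifold $M/\g$ with compact typical fiber $L_\g$---it yields the multiplicative formula $\chi(M)=\chi(L_\g)\cdot\chi_B(\g)$. Since $\chi(M)\neq 0$, both factors must be non-zero, so in particular $\chi_B(\f)=\chi_B(\g)\neq 0$. The analogous formula applied to $\f$ itself gives $\chi(M)=\chi(\widetilde{L}_\f^{\mathrm{hol}})\cdot\chi_B(\f)$, where $\widetilde{L}_\f^{\mathrm{hol}}$ denotes the holonomy cover of the typical $\f$-leaf; hence $\chi(\widetilde{L}_\f^{\mathrm{hol}})\neq 0$. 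If the defect $d=\dim(\overline{\f})-\dim(\f)$ of $\f$ were positive, lifting $\f$ to Molino's basic manifold would equip $\widetilde{L}_\f^{\mathrm{hol}}$ with a free action of a positive-dimensional subgroup of the closure of the structural algebra, forcing $\chi(\widetilde{L}_\f^{\mathrm{hol}})=0$ and contradicting the above. Hence $d=0$, $\f$ is closed, and the original foliation downstairs is closed as well.

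The most delicate part will be the last step: exhibiting the free positive-dimensional Lie group action on $\widetilde{L}_\f^{\mathrm{hol}}$ arising from the Killing structure when the defect is positive. This requires a careful lift of $\f$ to Molino's basic manifold $W$, where the leaf closure foliation becomes a Lie foliation whose structural algebra acts by translations, and then descending this action to a free action on the holonomy cover of a typical $\f$-leaf so that its Euler characteristic must vanish.
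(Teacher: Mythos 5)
Your reduction to the simply-connected case, the use of Theorem \ref{theorem: Haefliger deformation} together with Theorem \ref{theorem: basic euler char is preserved by deformations}, and the product formula $\chi(M)=\chi(L_\g)\cdot\chi_B(\g)$ for the \emph{closed} approximating foliation $\g$ all match the paper (which justifies that formula by checking that $B\mathrm{G}_\g$ is simply-connected and that $H^*(B(M/\!/\g);\mathbb{R})\cong H^*(M/\g;\mathbb{R})$). The gap is in your final step. First, the ``analogous formula'' $\chi(M)=\chi(\widetilde{L}_\f^{\mathrm{hol}})\cdot\chi_B(\f)$ for the non-closed foliation $\f$ itself is unjustified: the base of Haefliger's fibration is $B\mathrm{G}_\f$, and the identification of $H^*(B\mathrm{G}_\f;\mathbb{R})$ with basic cohomology is only available when the foliation is closed, so that $B\mathrm{G}_\f$ is the classifying space of the quotient orbifold. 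For a non-closed Riemannian foliation these cohomologies differ in general: already for the Kronecker foliation one has $B\mathrm{G}_\f\simeq\mathbb{T}^2$, so $H^2(B\mathrm{G}_\f;\mathbb{R})\cong\mathbb{R}$, while $H_B^2(\f)=0$; moreover multiplicativity of $\chi$ is problematic because the fiber $\widetilde{L}_\f^{\mathrm{hol}}$ is non-compact. Second, and more decisively, even granting that formula, a free action of a positive-dimensional Lie group on a \emph{non-compact} manifold does not force the Euler characteristic to vanish: $\mathbb{R}$ acts freely on itself by translations and $\chi(\mathbb{R})=1$. Poincar\'e--Hopf requires compactness, and a generic leaf of a non-closed Riemannian foliation is non-compact. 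In addition, the structural algebra acts transversally, permuting leaves inside their closures; it does not in any evident way act freely on the holonomy cover of a single leaf.

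You therefore need a substitute for the last step, since $\chi_B(\f)=\chi_B(\g)\neq 0$ alone is not a contradiction: an isometric flow on $\mathbb{S}^3$ with two closed orbits and all other orbits dense in tori has $\chi_B=2$ but is not closed. The paper instead takes a closed leaf $L$ of $\f$ (which exists by Ghys' theorem, or because $\chi_B(\f)\neq0$ forces $\Sigma^{\dim\f}\neq\emptyset$ by Theorem \ref{theorem: basic euler char localizes to closed leaves}), observes that $L$ remains a leaf of every approximating closed foliation $\g_i$ and that, by local Reeb stability, the generic $\g_i$-leaf covers $L$ with degree $h(\g_i)=|\mathrm{Hol}(L)|$, so that $\chi(M)=h(\g_i)\,\chi(L)\,\chi_B(\f)$. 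Lemma \ref{lemma: variando a holonomia} then makes $h(\g_i)$ arbitrarily large while all the other factors stay fixed, which gives the contradiction.
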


It is shown in \cite[Théorème 3.5]{ghys} that a Riemannian foliation of a compact simply-connected manifold $M$ satisfying $\chi(M)\neq 0$ must have a closed leaf, a fact that also follows from the Poincaré--Hopf index theorem (see Proposition \ref{prop: X killing com Zero=Sigma dim f}). Notice that Theorem \ref{teo: topological obstruction} shows, in fact, that all leaves of such a foliation must be closed.

\section{Preliminaries}\label{section: preliminaires}

In this section we review some of the basics on Riemannian foliations and establish our notation. All objects are supposed to be of class $C^\infty$ (smooth) unless otherwise stated.

Throughout this article, $\f$ will denote a $p$-dimensional foliation of a \emph{connected} manifold $M$ without boundary, of dimension $n=p+q$. The subbundle of $TM$ consisting of the spaces tangent to the leaves will be denoted by $T\f$ and the Lie algebra of the vector fields with values in $T\f$ by $\mathfrak{X}(\f)$. The number $q$ is the \textit{codimension} of $\f$. A foliation $\f$ is \textit{transversely orientable} if its normal space $\nu\f:=TM/T\f$ is orientable. The set of the closures of the leaves in $\f$ is denoted by $\overline{\f}:=\{\overline{L}\ |\ L\in\f\}$. In the simple case when $\overline{\f}=\f$, we say that $\f$ is a \textit{closed} foliation.

Recall that $\f$ can be alternatively defined by an open cover $\{U_i\}_{i\in I}$ of $M$, submersions $\pi_i:U_i\to \overline{U}_i$, with $\overline{U}_i\subset\mathbb{R}^q$, and diffeomorphisms $\gamma_{ij}:\pi_j(U_i\cap U_j)\to\pi_i(U_i\cap U_j)$ satisfying $\gamma_{ij}\circ\pi_j|_{U_i\cap U_j}=\pi_i|_{U_i\cap U_j}$ for all $i,j\in I$. The collection $\{\gamma_{ij}\}$ is a \textit{Haefliger cocycle} representing $\f$ and each $U_i$ is a \textit{simple open set} for $\f$. We will assume without loss of generality that the fibers $\pi_i^{-1}(\overline{x})$ are connected.

A \textit{foliate morphism} between $(M,\f)$ and $(N,\g)$ is a smooth map $f:M\to N$ that sends leaves of $\f$ into leaves of $\g$. In particular, we may consider $\f$-foliate diffeomorphisms $f:M\to M$. The infinitesimal counterparts of this notion are the \textit{foliate vector fields} of $\f$, that is, the vector fields in the Lie subalgebra
$$\mathfrak{L}(\f)=\{X\in\mathfrak{X}(M)\ |\ [X,\mathfrak{X}(\f)]\subset\mathfrak{X}(\f)\}.$$
If $X\in\mathfrak{L}(\f)$ and $\pi:U\to \overline{U}$ is a submersion locally defining $\f$, then $X|_U$ is $\pi$-related to some vector field $\overline{X}_{\overline{U}}\in\mathfrak{X}(\overline{U})$. In fact, this characterizes $\mathfrak{L}(\f)$ \cite[Section 2.2]{molino}. The Lie algebra $\mathfrak{L}(\f)$ also has the structure of a module, whose coefficient ring consists of the \textit{basic functions} of $\f$, that is, functions $f\in C^{\infty}(M)$ such that $Xf=0$ for every $X\in\mathfrak{X}(\f)$. We denote this ring by $\Omega^0_B(\f)$. The quotient of $\mathfrak{L}(\f)$ by the ideal $\mathfrak{X}(\f)$ yields the Lie algebra $\mathfrak{l}(\f)$ of \textit{transverse vector fields}. For $X\in\mathfrak{L}(\f)$, we denote its induced transverse field by $\overline{X}\in\mathfrak{l}(\f)$. Note that each $\overline{X}$ defines a unique section of $\nu\f$ and that $\mathfrak{l}(\f)$ is also a $\Omega^0_B(\f)$-module.

Let $(M,\f)$ be represented by the Haefliger cocycle $\{(U_i,\pi_i,\gamma_{ij})\}$. The pseudogroup of local diffeomorphisms generated by $\gamma=\{\gamma_{ij}\}$ acting on $T_\gamma:=\bigsqcup_i \overline{U}_i$ is the \textit{holonomy pseudogroup} of $\f$ associated to $\gamma$, that we denote by $\mathscr{H}_\gamma$. If $\delta$ is another Haefliger cocycle defining $\f$ then $\mathscr{H}_\delta$ is equivalent to $\mathscr{H}_\gamma$, meaning that there is a maximal collection $\Phi$ of diffeomorphisms $\varphi$ from open sets of $T_\delta$ to open sets of $T_\gamma$ such that $\{\mathrm{Dom}(\varphi)\ |\ \varphi\in\Phi\}$ covers $T_\delta$, $\{\mathrm{Im}(\varphi)\ |\ \varphi\in\Phi\}$ covers $T_\gamma$ and, for all $\varphi,\psi\in\Phi$, $h\in\mathscr{H}_\delta$ and $h'\in\mathscr{H}_\gamma$, we have $\psi^{-1}\circ h'\circ\varphi\in\mathscr{H}_\delta$, $\psi\circ h\circ\varphi^{-1}\in\mathscr{H}_\gamma$ and $h'\circ\varphi\circ h\in\Phi$.

We will write $(T_\f,\mathscr{H}_\f)$ to denote a representative of the equivalence class of these pseudogroups. Note that the orbit space $T_\f/\mathscr{H}_\f$ is precisely $M/\f$. We denote the germinal holonomy group of a leaf $L$ at $x\in L$ by $\mathrm{Hol}_x(L)$. Recall that there is a surjective homomorphism $h:\pi_1(L,x)\to\mathrm{Hol}_x(L)$. Also, it can be shown that leaves without holonomy are generic, in the sense that $\{x\in M\ |\ \mathrm{Hol}_x(L)=0\}$ is residual in $M$ \cite[Chapter 2]{candel}.

\subsection{Foliations of orbifolds}\label{section: orbifolds}

Orbifolds are generalizations of manifolds that arise naturally in many areas of mathematics. We refer to \cite[Chapter 1]{adem}, \cite[Section 2.4]{mrcun} or  \cite{kleiner} for quick introductions.

Let $\mathcal{O}$ be a smooth orbifold with an atlas $\mathcal{A}=\{(\widetilde{U}_i,H_i,\phi_i)\}_{i\in I}$. We denote the underlying space of $\mathcal{O}$ by $|\mathcal{O}|$. Recall that each chart of $\mathcal{A}$ consists of an open subset $\widetilde{U}$ of $\mathbb{R}^n$, a finite subgroup $H$ of $\mathrm{Diff}(\widetilde{U})$ and an $H$-invariant map $\phi:\widetilde{U}\to |\mathcal{O}|$ that induces a homeomorphism between $\widetilde{U}/H$ and some open subset $U\subset |\mathcal{O}|$. Given a chart $(\widetilde{U},H,\phi)$, and $x=\phi(\tilde{x})\in U$, the \textit{local group} $\Gamma_x$ of $\mathcal{O}$ at $x$ is the isotropy subgroup $H_{\tilde{x}}<H$. Its isomorphism class is independent of both the chart and the lifting $\tilde{x}$, and for every $x\in|\mathcal{O}|$ we can always find a compatible chart $(\widetilde{U}_x,\Gamma_x,\phi_x)$ \textit{around} $x$, that is, such that $\phi^{-1}(x)$ consists of a single point.

\begin{example} If $\f$ is a $q$-codimensional foliation of a manifold $M$ such that every leaf is compact and with finite holonomy, then $M/\f$ has a canonical $q$-dimensional orbifold structure relative to which the local group of a leaf in $M/\f$ is its holonomy group \cite[Theorem 2.15]{mrcun}. We will denote the orbifold obtained this way by $M/\!/\f$, in order to distinguish it from the topological space $M/\f$. Analogously, if a Lie group $G$ acts properly and almost freely on $M$, we will denote the quotient orbifold by $M/\!/G$ (see also Example \ref{exe: foliated actions}).
\end{example}

Similarly to the construction of the holonomy pseudogroup of a foliation, consider $U_\mathcal{A}:=\bigsqcup_{i\in I}\widetilde{U}_i$ and $\phi:=\{\phi_i\}_{i\in I}:U_\mathcal{A}\to |\mathcal{O}|$, that is, $x\in \widetilde{U}_i\subset U_\mathcal{A}$ implies $\phi(x)=\phi_i(x)$. A \textit{change of charts} of $\mathcal{A}$ is a diffeomorphism $h:V\to W$, with $V,W\subset U_\mathcal{A}$ open sets, such that $\phi\circ h=\phi|_V$. The collection of all changes of charts of $\mathcal{A}$ form a pseudogroup $\mathscr{H}_{\mathcal{A}}$ of local diffeomorphisms of $U_\mathcal{A}$, and $\phi$ induces a homeomorphism $U_\mathcal{A}/\mathscr{H}_{\mathcal{A}}\to|\mathcal{O}|$. The germs of the elements in $\mathscr{H}_{\mathcal{A}}$ have a natural structure of a Lie groupoid that furnishes an alternative framework to develop the theory of orbifolds (see \cite[Chapter 5]{mrcun} or \cite[Section 1.4]{adem}).

A smooth differential form on $\mathcal{O}$ can be seen as an $\mathscr{H}_{\mathcal{A}}$-invariant differential form on $U_\mathcal{A}$. The De Rham cohomology of $\mathcal{O}$ is defined in complete analogy with the manifold case. The following result can be seen as an orbifold version of De Rham's Theorem (see \cite[Theorem 1]{satake} or \cite[Theorem 2.13]{adem}).

\begin{theorem}[Satake]\label{theorem: Satake}
Let $\mathcal{O}$ be an orbifold. Then $H_{\mathrm{dR}}^i(\mathcal{O})\cong H^i(|\mathcal{O}|,\mathbb{R})$.
\end{theorem}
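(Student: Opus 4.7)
The plan is to adapt the sheaf-theoretic proof of the classical De Rham theorem. On $|\mathcal{O}|$ I introduce the sheaf $\underline{\Omega}^i$ whose sections over an open set $V$ are the orbifold $i$-forms on $\mathcal{O}|_V$, namely the $\mathscr{H}_\mathcal{A}$-invariant smooth $i$-forms on $\phi^{-1}(V)\subset U_\mathcal{A}$. Since invariance is a local condition and the exterior derivative is $\mathscr{H}_\mathcal{A}$-equivariant, $(\underline{\Omega}^*,d)$ is a complex of sheaves whose complex of global sections computes $H^*_{\mathrm{dR}}(\mathcal{O})$. The goal is to show that $(\underline{\Omega}^*,d)$ is a fine resolution of the constant sheaf $\underline{\mathbb{R}}$, whence the claimed isomorphism will follow by the abstract De Rham theorem.

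Fineness follows from the existence of orbifold partitions of unity: given a locally finite cover of $|\mathcal{O}|$ by images of orbifold charts $(\widetilde{U}_i,H_i,\phi_i)$, choose a smooth partition of unity on each $\widetilde{U}_i$ and average it over $H_i$ to obtain $H_i$-invariant functions that descend to smooth orbifold functions on $|\mathcal{O}|$. To see that $(\underline{\Omega}^*,d)$ resolves $\underline{\mathbb{R}}$, the main step is an orbifold Poincaré lemma: for each $x\in|\mathcal{O}|$, pick a chart $(\widetilde{U}_x,\Gamma_x,\phi_x)$ around $x$ and apply Bochner's linearization theorem to conjugate the $\Gamma_x$-action near $\tilde{x}$ to an orthogonal linear action. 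Shrinking $\widetilde{U}_x$ to a $\Gamma_x$-invariant open ball centered at $\tilde{x}$, the standard homotopy operator $K$ coming from radial contraction is $\Gamma_x$-equivariant, hence restricts to $\Omega^*(\widetilde{U}_x)^{\Gamma_x}$ and witnesses exactness in positive degrees. In degree $0$, the kernel of $d$ on invariant smooth functions is manifestly the sheaf of locally constant $\mathbb{R}$-valued functions on $|\mathcal{O}|$, so the resolution begins with $\underline{\mathbb{R}}$ as required.

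With these two ingredients in hand, the standard sheaf-cohomological argument produces
$$H^i_{\mathrm{dR}}(\mathcal{O})=H^i(\Gamma(|\mathcal{O}|,\underline{\Omega}^*))\cong H^i(|\mathcal{O}|,\underline{\mathbb{R}})=H^i(|\mathcal{O}|,\mathbb{R}).$$
The main technical hurdle is the orbifold Poincaré lemma, which rests on linearizing the local group action so that the radial contraction becomes equivariant; once this is arranged the averaging of the manifold-level homotopy operator is automatic and the rest of the proof is a formal transcription of the manifold case. Notably, this approach uses only local constructions in orbifold charts together with a partition of unity on $|\mathcal{O}|$, so neither a global desingularization nor equivariant classifying-space machinery is needed.
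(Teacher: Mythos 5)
The paper does not prove this statement: it is quoted as a classical result of Satake, with references to \cite[Theorem 1]{satake} and \cite[Theorem 2.13]{adem}. Your sheaf-theoretic argument is correct and is in fact essentially the proof found in those references: one realizes the orbifold de Rham complex as the global sections of a complex of sheaves on $|\mathcal{O}|$, checks fineness via $\Gamma$-averaged partitions of unity, proves stalkwise exactness by an equivariant Poincar\'e lemma (your use of Bochner linearization to make the radial homotopy operator $\Gamma_x$-equivariant is the standard device; alternatively one can average the homotopy operator itself over $\Gamma_x$, since every $g\in\Gamma_x$ fixes the center of the chart), and concludes by the abstract de Rham theorem. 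The only points worth making explicit are that $|\mathcal{O}|$ is assumed paracompact (so that fine implies acyclic) and that the final identification $H^i(|\mathcal{O}|,\underline{\mathbb{R}})\cong H^i(|\mathcal{O}|,\mathbb{R})$ with singular cohomology uses local contractibility of $|\mathcal{O}|$, which your linearized local model $\widetilde{U}_x/\Gamma_x$ (a quotient of a ball by a linear action, contractible via the radial contraction) already provides.
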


There are several distinct definitions of smooth maps between orbifolds in the literature (see, for instance, \cite{borzellino2}, \cite{chenruan} and \cite{kleiner}), the most common being to define a continuous map $f:|\mathcal{O}|\to|\mathcal{P}|$ to be smooth when, for every $x\in|\mathcal{O}|$, there are charts $(\widetilde{U},H,\phi)$ around $x$ and $(\widetilde{V},K,\psi)$ around $f(x)$ such that $f(U)\subset V$ and there is a smooth map $\widetilde{f}:\widetilde{U}\to\widetilde{V}$ with $\psi\circ\widetilde{f}=f\circ\phi$. There are relevant refinements of this notion, such as the \textit{good maps} defined in \cite{chenruan}, that are needed in some constructions, for example in order to coherently pull orbibundles back by smooth maps. We remark here that this notion of good map matches the notion of smooth morphisms when the orbifolds are viewed as Lie groupoids (see \cite[Proposition 5.1.7]{lupercio}). In particular, a smooth map $M\to\mathcal{O}$ ``in the orbifold sense'', as defined in \cite[p. 715]{haefliger2}, corresponds to a good map.

Following \cite[Section 3.2]{haefliger2}, we define a smooth foliation $\f$ of $\mathcal{O}$ to be a smooth foliation of $U_\mathcal{A}$ which is invariant by $\mathscr{H}_{\mathcal{A}}$. The atlas can be chosen so that on each $\widetilde{U}_i$ the foliation is given by a surjective submersion with connected fibers onto a manifold $T_i$.

\subsection{Basic cohomology}\label{section: basic cohomology}

A covariant tensor field $\xi$ on $M$ is $\f$-basic if $\xi(X_1,\dots,X_k)=0$, whenever some $X_i\in\mathfrak{X}(\f)$, and $\mathcal{L}_X\xi=0$ for all $X\in\mathfrak{X}(\f)$. These are the tensor fields that project to tensor fields on $T_\f$ and are invariant by $\mathscr{H}_\f$. In particular, we say that a differential form $\alpha\in\Omega^i(M)$ of degree $i$ is \textit{basic} when it is basic as a tensor field. By Cartan's formula, $\alpha$ is basic if, and only if, $i_X\alpha=0$ and $i_X(d\alpha)=0$ for all $X\in\mathfrak{X}(\f)$. We denote the $\Omega^0_B(\f)$-module of basic $i$-forms of $\f$ by $\Omega^i_B(\f)$.

By definition, $\Omega^*_B(\f)$ is closed under the exterior derivative, so we can consider the complex
$$\cdots \stackrel{d}{\longrightarrow} \Omega^{i-1}_B(\f) \stackrel{d}{\longrightarrow} \Omega^i_B(\f) \stackrel{d}{\longrightarrow} \Omega^{i+1}_B(\f) \stackrel{d}{\longrightarrow}\cdots .$$
The cohomology groups of this complex are the \textit{basic cohomology groups} of $\f$, that we denote by $H^i_B(\f)$. When the dimensions $\dim(H^i_B(\f))$ are all finite we define the \textit{basic Euler characteristic} of $\f$ as the alternate sum
$$\chi_B(\f)=\sum_i(-1)^i\dim(H^i_B(\f)).$$
In analogy with the manifold case, we say that $b_B^i(\f):=\dim(H^i_B(\f))$ are the \textit{basic Betti numbers} of $\f$. When $\f$ is the trivial foliation by points, we recover the classical Euler characteristic and Betti numbers of $M$.

Since we have an identification between $\f$-basic forms and $\mathscr{H}_\f$-invariant forms on $T_\f$, and an identification between differential forms on an orbifold $\mathcal{O}$ and $\mathscr{H}_{\mathcal{O}}$-invariant forms on $U_\mathcal{O}$, the following result is clear.

\begin{proposition}\label{prop: basic cohomology of closed foliations}
Let $(M,\f)$ be a smooth foliation whose every leaf is compact and with finite holonomy. Then the projection $\pi:M\to M/\!/\f$ induces an isomorphism of differential complexes $\pi^*:\Omega^*(M/\!/\f)\to\Omega_B^*(\f)$. In particular, $H_B^*(\f)\cong H_{\mathrm{dR}}^*(M/\!/\f)$.
\end{proposition}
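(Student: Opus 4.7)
The plan is to realize both differential complexes as the same subcomplex of $\Omega^*(T_\f)$ via an equivalence between the holonomy pseudogroup $\mathscr{H}_\f$ and the change-of-charts pseudogroup $\mathscr{H}_{\mathcal{A}}$ of a suitable atlas $\mathcal{A}$ for $M/\!/\f$, and then to observe that $\pi^*$ is precisely this tautological identification.

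First I would recall (or sketch) the local structure around a closed leaf $L$ of $\f$ with finite holonomy: by a standard local Reeb-type argument, there is a saturated tubular neighborhood $U_L$ $\f$-diffeomorphic to $\widetilde{L}\times_{\mathrm{Hol}(L)}\widetilde{V}$, where $\widetilde{L}\to L$ is the holonomy cover and $\widetilde{V}$ is a $\mathrm{Hol}(L)$-invariant open neighborhood of the origin in the normal space at a point of $L$. Then $\pi_L:U_L\to \widetilde{V}$ is a submersion defining $\f$ on $U_L$, and passing to $\f$-orbits gives precisely $U_L/\f\cong \widetilde{V}/\mathrm{Hol}(L)$, which is (by construction) an orbifold chart $(\widetilde{V},\mathrm{Hol}(L),\phi_L)$ for $M/\!/\f$ around the point corresponding to $L$. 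Covering $M$ by such saturated neighborhoods, one obtains simultaneously a Haefliger cocycle representing $\f$, with transversal $T_\f=\bigsqcup_L \widetilde{V}_L$, and an orbifold atlas $\mathcal{A}$ for $M/\!/\f$ with $U_\mathcal{A}=T_\f$. Since each change of chart in $\mathscr{H}_\mathcal{A}$ comes from a local $\f$-foliate diffeomorphism between neighborhoods of leaves, and conversely each generator $\gamma_{ij}$ of $\mathscr{H}_\f$ descends to a change of orbifold charts, the two pseudogroups are equivalent.

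With this identification, the remark preceding the proposition says that $\Omega_B^*(\f)$ is in natural bijection, via the local defining submersions $\pi_i$, with the subcomplex of $\Omega^*(T_\f)$ consisting of $\mathscr{H}_\f$-invariant forms; and the definition of orbifold differential forms identifies $\Omega^*(M/\!/\f)$ with the subcomplex of $\Omega^*(U_\mathcal{A})=\Omega^*(T_\f)$ consisting of $\mathscr{H}_\mathcal{A}$-invariant forms. Both identifications commute with $d$ and wedge product, and unwinding the definitions shows that the map sending an orbifold form $\omega\in\Omega^k(M/\!/\f)$ (represented by its $\mathscr{H}_\mathcal{A}$-invariant lift $\tilde\omega\in\Omega^k(T_\f)$) to its pullback by the canonical projection $\pi:M\to M/\!/\f$ is exactly the map that sends $\tilde\omega$ to the unique basic form on $M$ whose restriction to each chart $U_L$ equals $\pi_L^*\tilde\omega|_{\widetilde{V}_L}$. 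Hence $\pi^*:\Omega^*(M/\!/\f)\to \Omega_B^*(\f)$ is an isomorphism of differential complexes, and passing to cohomology yields $H_B^*(\f)\cong H_{\mathrm{dR}}^*(M/\!/\f)$.

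The only mildly technical point is the compatibility of atlases in the first step, i.e.\ verifying that the orbifold pseudogroup produced from the tubular model $\widetilde{L}\times_{\mathrm{Hol}(L)}\widetilde{V}$ matches the holonomy pseudogroup obtained from the defining submersions $\pi_L$; once this is in place the proposition is really a matter of unwinding the two definitions of invariant forms and noting they coincide under $\pi^*$.
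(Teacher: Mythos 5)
Your proposal is correct and follows exactly the route the paper takes: the paper simply declares the result ``clear'' from the identification of $\Omega_B^*(\f)$ with the $\mathscr{H}_\f$-invariant forms on $T_\f$ and of $\Omega^*(M/\!/\f)$ with the $\mathscr{H}_{\mathcal{A}}$-invariant forms on $U_\mathcal{A}$, which is precisely the identification you spell out via local Reeb stability and the equivalence of the two pseudogroups. Your version just makes explicit the details (the tubular model $\widetilde{L}\times_{\mathrm{Hol}(L)}\widetilde{V}$ and the matching of atlases) that the paper leaves to the cited construction of the orbifold structure on $M/\f$.
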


\subsection{Riemannian foliations}

Let $\f$ be a smooth foliation of $M$. A \textit{transverse metric} for $\f$ is a symmetric, positive, basic $(2,0)$-tensor field $\mathrm{g}^T$ on $M$. In this case $(M,\f,\mathrm{g}^T)$ is called a \textit{Riemannian foliation}. A Riemannian metric $\mathrm{g}$ on $M$ is \textit{bundle-like} for $\f$ if for any open set $U$ and any vector fields $Y,Z\in\mathfrak{L}(\f|_U)$ that are perpendicular to the leaves, $\mathrm{g}(Y,Z)\in\Omega^0_B(\f|_U)$. Any bundle-like metric $\g$ determines a transverse metric by $\mathrm{g}^T(X,Y):=\mathrm{g}(X^\bot,Y^\bot)$ with respect to the decomposition $TM=T\f\oplus T\f^\perp$. Conversely, given $\mathrm{g}^T$ one can always choose a bundle-like metric on $M$ that induces it \cite[Proposition 3.3]{molino}. With a chosen bundle-like metric, we will identify the bundles $\nu\f\equiv T\f^\perp$.

A metric $\mathrm{g}$ is bundle-like for $(M,\f)$ if and only if a geodesic that is perpendicular to a leaf at one point remains perpendicular to all the leaves it intersects. Moreover, geodesic segments perpendicular to the leaves project to geodesic segments on the local quotients $\overline{U}$. It follows that the leaves of a Riemannian foliation are locally equidistant.

\begin{example}\label{exe: foliated actions}
If a foliation $\f$ on $M$ is given by an almost free action of a Lie group $G$ and $\mathrm{g}$ is a Riemannian metric on $M$ such that $G$ acts by isometries, then $\mathrm{g}$ is bundle-like for $\f$ \cite[Remark 2.7(8)]{mrcun}. In other words, a foliation induced by an isometric action is Riemannian. A foliation given by the action of a Lie group is said to be \textit{homogeneous}.

For a specific example, consider the flat torus $\mathbb{T}^2=\mathbb{R}^2/\mathbb{Z}^2$. For each $\lambda\in(0,+\infty)$ consider the almost free $\mathbb{R}$-action $(t,[x,y]) \longmapsto [x+t,y+\lambda t]$. The resulting foliation is the \textit{$\lambda$-Kronecker foliation} of the torus. Observe that when $\lambda$ is irrational each leaf is dense in $\mathbb{T}^2$.

A more relevant example is the following. Let us fix $\lambda=(\lambda_0,\dots,\lambda_n)\in\mathbb{N}^{n+1}$ satisfying $\gcd(\lambda_0,\dots,\lambda_n)=1$ and modify the standard action of $\mathbb{C}^\times$ on $\mathbb{C}^{n+1}\setminus\{0\}$ by adding weights given by $\lambda$. Precisely, let $z\in\mathbb{C}^\times$ act by $z\!\cdot\!(z_0,\dots,z_n)=(z^{\lambda_0}z_0,\dots,z^{\lambda_n}z_n)$. This is an almost free isometric action that restricts to an action of $\mathbb{S}^1<\mathbb{C}^\times$ on $\mathbb{S}^{2n+1}\subset\mathbb{C}^{n+1}$ with the same quotient. The quotient orbifold $\mathbb{S}^{2n+1}/\!/\mathbb{S}^1$ is called a \textit{weighted projective space} and denoted $\mathbb{CP}^n[\lambda]$ (for further details, see \cite[Section 4.5]{boyer}). Note that $\mathbb{CP}^1[p,q]$, for example, is simply the $\mathbb{Z}_p$-$\mathbb{Z}_q$-football orbifold, that is, a sphere with two cone singularities (of order $p$ and $q$) at the poles.
\end{example}

\begin{example}\label{example: Riemannian suspensions}
Let $(T,\mathrm{g})$ be a Riemannian manifold. A foliation $\f$ defined by the suspension of a homomorphism $h:\pi_1(B,x_0)\to\mathrm{Iso}(T)$ is naturally a Riemannian foliation \cite[Section 3.7]{molino}.
\end{example}

It follows from the definition that $\mathrm{g}^T$ projects to Riemannian metrics on the local quotients $\overline{U}_i$ of a Haefliger cocycle $\{(U_i,\pi_i,\gamma_{ij})\}$ defining $\f$. The holonomy pseudogroup $\mathscr{H}_\f$ then becomes a pseudogroup of local isometries of $T_\f$ and, with respect to a a bundle-like metric, the submersions defining $\f$ become Riemannian submersions. We will say that $\f$ has positive \textit{transverse sectional curvature} when $(T_\f,\mathrm{g}^T)$ has positive sectional curvature. In this case we denote $\sec_\f>0$. The notions of negative, nonpositive and nonnegative transverse sectional curvature are defined analogously, as well as the corresponding notions for \textit{transverse Ricci curvature}.

The basic cohomology of Riemannian foliations on compact manifolds have finite dimension, as shown in \cite[Théorème 0]{kacimi2}. The hypothesis that $M$ is compact can be relaxed to $M/\overline{\f}$ being compact, provided that $\f$ is a \textit{complete Riemannian foliation}, that is, that $M$ is a complete Riemannian manifold with respect to some bundle-like metric for $\f$ \cite[Proposition 3.11]{goertsches}. Hence, if this is the case, $\chi_B(\f)$ is always defined.

The following transverse analogue of the Bonnet--Myers Theorem due to J.~Hebda will be useful \cite[Theorem 1]{hebda}.

\begin{theorem}[Hebda]\label{theorem: hebda}
Let $\f$ be a complete Riemannian foliation satisfying $\ric_\f\geq c>0$. Then $M/\f$ is compact and $H^1_B(\f)=0$.
\end{theorem}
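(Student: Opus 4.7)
My plan is to combine a transverse Bonnet--Myers argument with the Bochner technique for basic $1$-forms.

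For compactness, fix a bundle-like metric $\mathrm{g}$ with respect to which $M$ is complete. Because $\mathrm{g}$ is bundle-like, a geodesic perpendicular to a leaf at one point is perpendicular to every leaf it meets; call these horizontal. Completeness of $M$ ensures horizontal geodesics are defined on all of $\mathbb{R}$, and each projects to a geodesic on the local transverse quotient $\overline{U}_i$, on which $\ric_\f$ is precisely the Ricci tensor. Given a horizontal $\gamma\colon[0,L]\to M$, horizontal parallel fields along $\gamma$ descend to parallel fields on the quotient, so I can test the transverse index form on $q-1$ orthonormal such fields multiplied by $\sin(\pi s/L)$. The usual Myers computation together with $\ric_\f\geq c>0$ produces a focal point to the initial leaf as soon as $L>\pi\sqrt{(q-1)/c}$, so no horizontal geodesic minimises the transverse distance past that length. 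This yields
$$\mathrm{diam}(M/\overline{\f})\leq\pi\sqrt{(q-1)/c},$$
and since the quotient pseudometric on $M/\overline{\f}$ induced from $\mathrm{g}$ is complete, the diameter bound forces compactness of $M/\overline{\f}$, whence also of $M/\f$.

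For the vanishing of $H_B^1(\f)$, I would use the basic Hodge theory of El Kacimi--Hector \cite{kacimi2}: every class in $H_B^1(\f)$ has a representative $\alpha$ with $\Delta_B\alpha=0$, where $\Delta_B=d\delta_B+\delta_Bd$ and $\delta_B=\delta-i_{\kappa^\sharp}$ with $\kappa$ the mean-curvature form (taken basic after Domínguez). The transverse Weitzenböck identity
$$\Delta_B\alpha=\nabla^{T*}\nabla^T\alpha+\ric_\f(\alpha^\sharp,\cdot),$$
paired with $\alpha$ and integrated over $M$ against a suitable transverse volume, gives
$$0=\int_M\bigl(|\nabla^T\alpha|^2+\ric_\f(\alpha^\sharp,\alpha^\sharp)\bigr).$$
Both integrands are nonnegative and the second is strictly positive unless $\alpha^\sharp=0$, hence $\alpha=0$ and $H_B^1(\f)=0$.

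The main obstacle will be to legitimise the Bochner integration on a possibly noncompact $M$. This is exactly where the first part of the theorem is essential: once $M/\overline{\f}$ is known to be compact, the $L^2$-basic Hodge decomposition from \cite{kacimi2} extends to the complete setting (cf.\ \cite{goertsches}), and the twist $\delta_B=\delta-i_{\kappa^\sharp}$ is tailored so that the mean-curvature contribution cancels in Green's identity, producing the clean Weitzenböck formula above. A secondary technicality is checking that horizontal parallel transport along $\gamma$ descends to transverse parallel transport even when the holonomy pseudogroup has nonclosed orbits; this is a purely local matter handled by the defining submersions of the Haefliger cocycle.
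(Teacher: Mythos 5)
The paper does not prove this statement: it is imported verbatim from Hebda's article \cite{hebda} and used as a black box, so there is no internal proof to compare yours against; I can only assess your argument on its own terms. Your first part (compactness) is a sound outline of the standard transverse Myers argument and is essentially what Hebda does: horizontal geodesics project to geodesics of the local quotients, the index form evaluated on the test fields $\sin(\pi s/L)E_i$ produces a focal point of the initial leaf once $L>\pi\sqrt{(q-1)/c}$, and completeness of the induced metric on the leaf closure space converts the diameter bound into compactness. (The closing step ``whence also of $M/\f$'' deserves a sentence --- for instance, the $\f$-saturation of a compact set meeting every leaf closure is dense, so the saturation of a slightly enlarged compact set is all of $M$, and $M/\f$ is then the image of a compact set under the open quotient map --- but this is routine.)

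The second part contains a genuine gap. The Weitzenböck formula you write for the basic Laplacian is not correct as stated for a general, possibly non-taut, Riemannian foliation: the adjoint of $d$ on basic forms involves the basic component $\kappa_b$ of the mean curvature form, and the resulting Bochner identity for basic one-forms carries an extra term in $\kappa_b$ (of the rough shape $\nabla^T_{\kappa_b^\sharp}\alpha$, contributing a term with no a priori sign after integration). It does not ``cancel in Green's identity''; taming this term is precisely what makes the Min-Oo--Ruh--Tondeur vanishing theorems nontrivial, and since tautness of a Riemannian foliation is governed by the Álvarez class $[\kappa_b]\in H^1_B(\f)$, an argument that begins by assuming the mean-curvature contribution vanishes is uncomfortably close to assuming the conclusion. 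On top of this sits the unaddressed justification of integration by parts over a noncompact $M$. None of this machinery is needed. Once compactness of the leaf space is in hand, $H^1_B(\f)=0$ follows from an elementary covering argument, which is the route Hebda-type proofs take: in degree one the map $H^1_B(\f)\to H^1_{\mathrm{dR}}(M)$ is injective (if a closed basic $\alpha$ equals $df$, then $Xf=i_X\alpha=0$ for all $X\in\mathfrak{X}(\f)$, so $f$ is basic), hence it suffices to show every closed basic one-form $\alpha$ is exact. If it is not, its period homomorphism $\pi_1(M)\to\mathbb{R}$ is nontrivial; on the associated covering $\rho:\widehat{M}\to M$ one has $\rho^*\alpha=d\widehat{f}$ with $\widehat{f}$ basic for $\rho^*(\f)$ and unbounded, since it shifts by a nonzero period under the deck action. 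But $\rho^*(\f)$ is again a complete Riemannian foliation with transverse Ricci curvature bounded below by $c>0$, so by the first part its leaf space is compact, and the basic function $\widehat{f}$ descends continuously to it and must be bounded --- a contradiction. I would replace your Bochner section with this argument.
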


\subsection{Molino theory}\label{subsection: Molino Theory}

In this section we summarize the structure theory for Riemannian foliations due mainly to P.~Molino \cite{molino}. Recall that a transverse metric induces a Riemannian metric on the quotient of each foliation chart $U$. The pullbacks of the Levi-Civita connections on $\overline{U}$ glue together to a well-defined connection $\nabla^B$ on $TM$, the \textit{canonical basic Riemannian connection}, which induces a covariant derivative on $\mathfrak{l}(\f|_U)$ (that we also denote by $\nabla^B$).

Let $\pi^\Yup:M^\Yup_\f\to M$ be the principal $\mathrm{O}(q)$-bundle of $\f$-transverse orthonormal frames\footnote{When $\f$ is transversely orientable $M^\Yup_\f$ consists of two $\mathrm{SO}(q)$-invariant connected components that correspond to the possible orientations. In this case we will assume that one component was chosen and, by abuse of notation, denote it also by $M^\Yup_\f$. Everything stated in this section then will carry over to this case by changing $\mathrm{O}(q)$ to $\mathrm{SO}(q)$.}. The normal bundle $\nu\f$ is associated to $M^\Yup_\f$, so the basic Riemannian connection $\nabla^B$ on $\nu\f$ induces a connection form $\omega_{\f}$ on $M^\Yup_\f$. This connection form in turn defines a horizontal distribution $\mathcal{H}:=\ker(\omega_{\f})$ on $M^\Yup_\f$ that allows us to horizontally lift $\f$, obtaining a foliation $\f^\Yup$ of $M^\Yup_\f$.

The advantage of lifting $\f$ to $\f^\Yup$ is that the latter admits a global transverse parallelism, that is, $\nu\f^\Yup$ is parallelizable by fields in $\mathfrak{l}(\f^\Yup)$ \cite[p. 82, p.148]{molino}. If $\f$ is complete, those fields admit complete representatives in $\mathfrak{L}(\f^\Yup)$ \cite[Section 4.1]{goertsches}, so $\f$ is \textit{transversally complete}, in the terminology of \cite[Section]{molino}. The theory of transversely parallelizable foliations then states that the partition $\overline{\f^\Yup}$ of $M^\Yup_\f$ is a \textit{simple foliation}, that is, $W:=M^\Yup_\f/\overline{\f^\Yup}$ is a manifold and $\overline{\f^\Yup}$ is given by the fibers of a locally trivial fibration $b:M^\Yup_\f\to W$ \cite[Proposition 4.1']{molino}. Molino shows that a leaf closure $\overline{L}\in\overline{\f}$ is the image by $\pi^\Yup$ of a leaf closure of $\f^\Yup$, which implies that each leaf closure is an embedded submanifold of $M$ \cite[Lemma 5.1]{molino}.

Let us fix $L^\Yup\in\f^\Yup$, denote $J=\overline{L^\Yup}$, and consider the foliation $(J,\f^\Yup|_J)$ and the Lie algebra $\mathfrak{g}_{\f}:=\mathfrak{l}(\f^\Yup|_J)$. Then $\f^\Yup|_J$ is a complete Lie $\mathfrak{g}_{\f}$-foliation, in the terminology of E.~Fedida, whose work establishes that such foliations are \textit{developable}, meaning that they lift to simple foliations of the universal coverings \cite[Theorem 4.1]{molino}. The restriction of $\f^\Yup$ to the closure of a different leaf is isomorphic to $(J,\f^\Yup|_J)$, so $\mathfrak{g}_{\f}$ is an algebraic invariant of $\f$, called its \textit{structural algebra}. We say that $d:=\dim(\mathfrak{g}_{\f})$ is the \textit{defect} of $\f$, motivated by the results in the next section.

\subsection{Molino sheaf and Killing foliations}\label{subsection: Molino Sheaf and Killing Foliations}

A field $X\in\mathfrak{X}(M)$ is a \textit{Killing vector field for $\mathrm{g}^T$} if $\mathcal{L}_X\mathrm{g}^T=0$. These fields form a Lie subalgebra of $\mathfrak{L}(\f)$ \cite[Lemma 3.5]{molino} and there is, thus, a corresponding Lie algebra of \textit{transverse Killing vector fields} that we  denote by $\mathfrak{iso}(\f,\mathrm{g}^T)$ (we will omit the transverse metric when it is clear from the context). The elements of $\mathfrak{iso}(\f,\mathrm{g}^T)$ are precisely the transverse fields that project to Killing vector fields on the local quotients of $\f$.

Now suppose that $\f$ is a complete Riemannian foliation and consider on $M^\Yup_\f$ the sheaf of Lie algebras $\mathscr{C}_{\f^\Yup}$ that associates to an open set $U^\Yup\subset M^\Yup_\f$ the Lie algebra $\mathscr{C}_{\f^\Yup}(U^\Yup)$ of the $\f^\Yup$-transverse fields in $U^\Yup$ that commute with all the global fields in $\mathfrak{l}(\f^\Yup)$. Each field in $\mathscr{C}_{\f^\Yup}(U^\Yup)$ is the natural lift of a $\f$-transverse Killing vector field on $\pi^\Yup(U^\Yup)$ \cite[Proposition 3.4]{molino}. The push-forward $\pi^\Yup_*(\mathscr{C}_{\f^\Yup})$ will be called the \textit{Molino sheaf} of $\f$, that we denote simply by  $\mathscr{C}_{\f}$. From what we just saw, it is the sheaf of the Lie algebras consisting of the local transverse Killing vector fields that lift to local sections of $\mathscr{C}_{\f^\Yup}$.

The sheaf $\mathscr{C}_{\f}$ is Hausdorff \cite[Lemma 4.6]{molino} and its stalk on each point is isomorphic to the Lie algebra $\mathfrak{g}_{\f}^{-}$ opposite to $\mathfrak{g}_{\f}$ \cite[Proposition 4.4]{molino}. The main motivation for the study of $\mathscr{C}_{\f}$ is that its orbits are the closures of the leaves of $\f$ \cite[Theorem 5.2]{molino}, in the sense that
$$\{X_x\ |\ X\in(\mathscr{C}_{\f})_x\}\oplus T_xL_x=T_x\overline{L_x}.$$
Notice that the relationship between $\overline{\f}$, $\mathscr{C}_{\f}$ and $\mathfrak{g}_\f$ enables us to write the defect of $\f$ as
$$d=\dim(\mathfrak{g}_\f)=\dim(\overline{\f})-\dim(\f)=\codim(\f)-\codim(\overline{\f}).$$

An interesting class of Riemannian foliations is the one consisting of complete Riemannian foliations that have a \textit{globally} constant Molino sheaf. Such foliations are called \textit{Killing foliations}, following \cite{mozgawa}. In other words, if $\f$ is a Killing foliation then there exists $\overline{X}_1,\dots,\overline{X}_d\in\mathfrak{iso}(\f)$ such that $T\overline{\f}=T\f\oplus\langle X_1,\dots, X_d \rangle$. A complete Riemannian foliation $\f$ is a Killing foliation if and only if $\mathscr{C}_{\f^\Yup}$ is globally constant, and in this case $\mathscr{C}_{\f^\Yup}(M^\Yup_\f)$ is the center of $\mathfrak{l}(\f^\Yup)$. Hence $\mathscr{C}_{\f}(M)$ is central (but not necessarily the full center of) $\mathfrak{l}(\f)$. It follows that the structural algebra of a Killing foliation is Abelian, because we have $\mathfrak{g}_{\f}^{-}\cong\mathscr{C}_{\f}(M)\cong(\mathscr{C}_{\f})_x$ for each $x\in M$. In particular, $\mathfrak{g}_{\f}^{-}\cong \mathfrak{g}_{\f}$ if $\f$ is Killing.

A complete Riemannian foliation $\f$ of a simply-connected manifold is automatically a Killing foliation \cite[Proposition 5.5]{molino}, since in this case $\mathscr{C}_{\f}$ cannot have holonomy. Homogeneous Riemannian foliations provide another relevant class of Killing foliations.

\begin{example}\label{example: homogeneous foliations are killing}
If $\f$ is a Riemannian foliation of a complete manifold $M$ given by the connected components of the orbits of a locally free action of $H<\mathrm{Iso}(M)$, then $\f$ is a Killing foliation and $\mathscr{C}_{\f}(M)$ consists of the transverse Killing vector fields induced by the action of $\overline{H}\subset\mathrm{Iso}(M)$ \cite[Lemme III]{molino3}.
\end{example}

In the terminology of transverse actions introduced in \cite[Section 2]{goertsches}, if $\f$ is a Killing foliation we can simply say that there is an effective isometric transverse action of $\mathfrak{g}_{\f}$ on $(M,\f)$, given by the Lie isomorphism $\mathfrak{g}_{\f}\ni V\mapsto \overline{V}^*\in\mathscr{C}_{\f}(M)<\mathfrak{iso}(\f)$, such that the singular foliation everywhere tangent to the distribution of varying rank $\mathfrak{g}_{\f}\!\cdot\!\f$ defined by $(\mathfrak{g}_{\f}\!\cdot\!\f)_x:=\{\overline{V}^*_x\ |\ V\in\mathfrak{g}_{\f}\}\oplus T_x\f$ is the singular foliation of $M$ given by the closures of the leaves of $\f$ \cite[Theorem 2.2]{goertsches}. For short, we write this as $\mathfrak{g}_{\f}\!\cdot\!\f=\overline{\f}$.

\section{Transverse Killing vector fields and the structural algebra}

In this section we prove some basic facts about the zero sets of transverse Killing vector fields that will later be used. We also study the behavior of the structural algebra when a Riemannian foliation is lifted to a finitely-sheeted covering space.

\subsection{Properties of transverse Killing vector fields}\label{section: Elementary Properties of Transverse Killing Fields}

If $(M,\f,\mathrm{g}^T)$ is a Riemannian foliation and $\overline{X}\in\mathfrak{iso}(\f,\mathrm{g}^T)$ is a transverse Killing vector field, we denote by $\Zero(\overline{X})$ the set where $\overline{X}$ vanishes. We say that an $\f$-saturated submanifold $N\subset M$ is \textit{horizontally totally geodesic} if it projects to totally geodesic submanifolds in the local quotients $\overline{U}$ of $\f$.

\begin{proposition}\label{proposition: propriedade zero killing}
Let $(M,\f,\mathrm{g}^T)$ be a Riemannian foliation and let $\overline{X}\in\mathfrak{iso}(\f)$ be a transverse Killing vector field. Then each connected component $N$ of $\Zero(\overline{X})$ is a closed submanifold of $M$ of even codimension. Moreover, $N$ is horizontally totally geodesic and saturated by the leaves of $\f$ and $\overline{\f}$, and if $\f$ is transversely orientable then $(N,\f|_N)$ is transversely orientable.
\end{proposition}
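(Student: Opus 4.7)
The plan is to reduce the statement to the classical theorem of Kobayashi that the zero set of a Killing vector field on a Riemannian manifold is a disjoint union of closed, totally geodesic submanifolds of even codimension, and then to transfer this structure to $M$ via the submersions $\pi_U:U\to\overline{U}$ of a Haefliger cocycle defining $\f$. Since $\overline{X}\in\mathfrak{iso}(\f)$ projects on each local quotient to a Killing vector field $X_{\overline{U}}$ with respect to the induced Riemannian metric $\mathrm{g}_{\overline{U}}$, and $N\cap U=\pi_U^{-1}(\Zero(X_{\overline{U}}))$, the classical theorem applied to $X_{\overline{U}}$ yields, locally, that $N\cap U$ is a closed $\f$-saturated submanifold whose codimension in $M$ equals the (even) codimension of the corresponding component of $\Zero(X_{\overline{U}})$ in $\overline{U}$. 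Because $\pi_U$ is a Riemannian submersion with respect to any bundle-like metric inducing $\mathrm{g}^T$, the image $\pi_U(N\cap U)$ being totally geodesic in $\overline{U}$ is exactly the statement that $N$ is horizontally totally geodesic.

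For saturation of $N$ by the leaves of $\overline{\f}$, I would use the smooth function $|\overline{X}|^2:=\mathrm{g}^T(\overline{X},\overline{X})$, which on each $U$ agrees with $\pi_U^*|X_{\overline{U}}|_{\mathrm{g}_{\overline{U}}}^2$ and is therefore basic. Basic functions are continuous and constant on leaves, so they must be constant on leaf closures; hence $|\overline{X}|^2$ vanishes on all of $\overline{L_x}$ whenever $x\in N$, giving the desired saturation by $\overline{\f}$. Saturation by $\f$ is immediate from the local description $N\cap U=\pi_U^{-1}(\Zero(X_{\overline{U}}))$.

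For transverse orientability when $\f$ is transversely orientable, the horizontally totally geodesic property yields the orthogonal splitting $\nu\f|_N=\nu(\f|_N)\oplus\nu_M N$, where $\nu_M N$ is the normal bundle of $N$ in $M$. Locally $\nu_M N$ corresponds to the normal bundle of $\Zero(X_{\overline{U}})$ in $\overline{U}$, whose fiber at each zero $\tilde p$ is the image of the skew-symmetric endomorphism $\nabla X_{\overline{U}}(\tilde p)$ of $T_{\tilde p}\overline{U}$, and thus carries a canonical complex structure; in particular $\nu_M N$ is orientable. Combining this with the transverse orientation of $\f$ produces an orientation on the quotient bundle $\nu(\f|_N)$.

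The technical step requiring most care is the coherent gluing of the local pictures: one must check that the submanifold structure, the splitting of $\nu\f|_N$, and the complex structure on $\nu_M N$ are independent of the chart used. This reduces to the observation that the transition maps $\gamma_{ij}$ of the Haefliger cocycle are local isometries intertwining the various $X_{\overline{U}_i}$, so standard equivariance arguments ensure the local data assemble to well-defined global structures on $N$, and the components of $\Zero(\overline{X})$ inherit a locally constant (hence globally constant, by connectedness) codimension.
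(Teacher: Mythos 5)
Your proof is correct and follows essentially the same route as the paper: reduce locally to Kobayashi's theorem on the zero set of a Killing vector field via the defining submersions, and then orient the orthogonal complement $W=(TN/T\f)^\perp$ inside $\nu\f|_N$ using the complex structure induced by the skew-symmetric endomorphism $\nabla^B\overline{X}$. The only (harmless) divergences are that you derive $\overline{\f}$-saturation from constancy of the basic function $\mathrm{g}^T(\overline{X},\overline{X})$ on leaf closures rather than from closedness of an $\f$-saturated set, and that you invoke the canonical (polar-decomposition) complex structure on the image of an invertible skew-symmetric endomorphism where the paper decomposes $W$ into eigenbundles with locally constant eigenvalues $\pm i\alpha_j$ and sets $J|_{E_j}=\alpha_j^{-1}\nabla^B\overline{X}$.
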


\begin{proof}
Since $\overline{X}$ is transverse and $\Zero(\overline{X})$ is closed, $N$ is saturated by the leaves of both $\f$ and $\overline{\f}$. Recall that, for a local trivialization $\pi:U\to \overline{U}$ of $\f$, $\overline{X}$ projects to a Killing vector field $\overline{X}_{\overline{U}}$ on $(\overline{U},\pi_*(\mathrm{g}^T))$, so we see that $N\cap U=\pi^{-1}(\overline{N})$, where $\overline{N}$ is a connected component of $\Zero(\overline{X}_{\overline{U}})$, which in turn is known to be a totally geodesic submanifold of $\overline{U}$ of even codimension \cite[Theorem 5.3]{kobayashi}. Using this it is now easy to prove that $N$ is an even-codimensional, horizontally totally geodesic, closed submanifold of $M$.

For the last claim, note that $(\nabla^B\overline{X})_x: \nu_x\f\to\nu_x\f$ is skew-symmetric and vanishes on $T_xN/T_xL_x$, so it preserves the $\mathrm{g}^T$-orthogonal complement $W_x:=(T_xN/T_xL_x)^\perp$. Choosing an adequate orthonormal basis, $(\nabla^B\overline{X})_x|_{W_x}$ has a matrix representation of the type
$$\begin{bmatrix}
0 & \alpha_1 & & & \\
-\alpha_1& 0 & & & \\
 & & \ddots &  &  \\
& & & 0 & \alpha_l \\
& & & -\alpha_l & 0\end{bmatrix},$$
where $l=\dim(W_x)/2=\codim(N)/2$. The eigenvalues $\pm i\alpha_j$ remain constant on $N$ because on each simple open set $U\ni x$ these are the eigenvalues of $(\nabla^{\overline{U}}(\overline{X}_{\overline{U}}))_{\overline{x}}$, that are known to be constant on $\overline{N}$ \cite[p. 61]{kobayashi}. The bundle $W=(TN/T\f)^\perp$ therefore decomposes into $\nabla^B\overline{X}$-invariant subbundles $W=E_1\oplus\dots\oplus E_l$ and we see that $J|_{E_j}=1/\alpha_j\nabla^B\overline{X}$ defines a complex structure on $W$. In particular, we have that $W$ is orientable and, since
$$\left.\frac{TM}{T\f}\right|_N=\frac{TN}{T\f}\oplus W,$$
the result follows.
\end{proof}

Recall that the symmetry rank of a Riemannian manifold is the rank of its isometry group. In analogy, if $(M,\f,\mathrm{g}^T)$ is a Riemannian foliation we define the \textit{transverse symmetry rank} of $\f$ by
$$\symrank(\f):=\max_{\mathfrak{a}}\Big\{\dim(\mathfrak{a})\Big\},$$
where $\mathfrak{a}$ runs over all the Abelian subalgebras of $\mathfrak{iso}(\f)$. If $\mathfrak{a}<\mathfrak{iso}(\f)$ is a subalgebra satisfying $\dim(\mathfrak{a})=\symrank(\f)$, we will denote by $\mathcal{Z}(\mathfrak{a})$ the set consisting of all proper connected components of the zero sets of the transverse Killing vector fields in $\mathfrak{a}$.

\begin{proposition}\label{proposition: propriedades de Zero killing}
For any $N,N'\in\mathcal{Z}(\mathfrak{a})$ the following holds:
\begin{enumerate}[(i)]
\item Every transverse Killing vector field in $\mathfrak{a}$ is tangent to $N$ (that is, every foliate field representing a field in $\mathfrak{a}$ is tangent to $N$) and, therefore, the restriction to $N$ of the fields in $\mathfrak{a}$ yields a commutative Lie algebra $\mathfrak{a}|_N$ of transverse Killing vector fields of $\f|_N$.
\item If $N$ is maximal in $\mathcal{Z}(\mathfrak{a})$ with respect to set inclusion, then
$$\dim(\mathfrak{a}|_N)=\dim(\mathfrak{a})-1.$$
\item Each connected component of $N\cap N'$ also belongs to $\mathcal{Z}(\mathfrak{a})$.
\end{enumerate}
\end{proposition}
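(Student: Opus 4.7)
The common idea is to pass to a local quotient $\pi:U\to\overline{U}$ of a simple open set of $\f$, where every $\overline{Y}\in\mathfrak{a}$ projects to a Killing vector field $Y_{\overline{U}}$ on $(\overline{U},\pi_*\mathrm{g}^T)$ and where the commutativity of $\mathfrak{a}$ descends to commutativity of a family of Killing fields on $\overline{U}$. All three items then reduce to statements about a commuting family of Killing vector fields on a Riemannian manifold, pulled back through $\pi$.

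For (i), I would invoke the standard observation that commuting Killing vector fields are tangent to each other's zero sets: if $X_{\overline{U}}$ and $Y_{\overline{U}}$ commute and $\overline{p}\in\overline{N}\subset\Zero(X_{\overline{U}})$, then $X_{\overline{U}}(\overline{p})=0$ turns $\nabla_{Y_{\overline{U}}}X_{\overline{U}}=\nabla_{X_{\overline{U}}}Y_{\overline{U}}$ into $(\nabla X_{\overline{U}})_{\overline{p}}(Y_{\overline{U}}(\overline{p}))=0$, while Proposition \ref{proposition: propriedade zero killing} identifies $\ker(\nabla X_{\overline{U}})_{\overline{p}}$ with $T_{\overline{p}}\overline{N}$. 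Pulling back to $M$, every $\overline{Y}\in\mathfrak{a}$ is tangent to $N$, hence restricts to a transverse Killing vector field of $\f|_N$; and $\mathfrak{a}|_N$ inherits commutativity from $\mathfrak{a}$.

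For (ii), fix $p\in N$ and simultaneously block-diagonalize the commuting skew-symmetric operators $\nabla^B\overline{Y}|_p$ ($\overline{Y}\in\mathfrak{a}$) on $\nu_p\f$, producing a weight decomposition $\nu_p\f=V_0\oplus\bigoplus_\alpha V_\alpha$ with $V_0$ the common kernel, each $V_\alpha$ two-dimensional and indexed by a nonzero weight $\alpha\in\mathfrak{a}^*$, and $\nabla^B\overline{Y}|_p|_{V_\alpha}=\alpha(\overline{Y})J_\alpha$ for a fixed rotation generator $J_\alpha$. Setting $S:=\{\alpha:\alpha(\overline{X})=0\}$, the identity $\ker(\nabla^B\overline{X}|_p)=V_0\oplus\bigoplus_{\alpha\in S}V_\alpha=T_pN/T_pL_p$ yields $\ker r=S^\perp:=\{\overline{Y}\in\mathfrak{a}:\alpha(\overline{Y})=0\text{ for all }\alpha\in S\}\ni\overline{X}$, where $r:\mathfrak{a}\to\mathfrak{iso}(\f|_N)$ is the restriction map. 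If $\dim S^\perp\geq 2$, some weight $\alpha_0\notin S$ exists (else $\nabla^B\overline{X}|_p=0$ forces $\overline{X}=0$ by Killing rigidity), so the nonzero functional $\alpha_0|_{S^\perp}$ has a nontrivial kernel containing a nonzero $\overline{Z}$. The component of $\Zero(\overline{Z})$ through $p$ then has tangent space strictly containing $T_pN$ (its kernel additionally absorbs the block $V_{\alpha_0}$) and belongs to $\mathcal{Z}(\mathfrak{a})$, contradicting maximality of $N$. Hence $\dim\ker r=1$ and $\dim\mathfrak{a}|_N=\dim\mathfrak{a}-1$.

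For (iii), pick $\overline{X},\overline{X'}\in\mathfrak{a}$ with $N,N'$ components of $\Zero(\overline{X}),\Zero(\overline{X'})$, fix $p\in C$, and set $\overline{Y}:=a\overline{X}+b\overline{X'}$ for $(a,b)\in\mathbb{R}^2$ avoiding the finitely many lines cut out by the nontrivial relations $a\alpha(\overline{X})+b\alpha(\overline{X'})=0$. The weight decomposition then gives $\ker(\nabla^B\overline{Y}|_p)=\ker(\nabla^B\overline{X}|_p)\cap\ker(\nabla^B\overline{X'}|_p)$, so the component $Z$ of $\Zero(\overline{Y})$ through $p$ satisfies $T_pZ=T_pN\cap T_pN'$. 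Since $N,N'$ are horizontally totally geodesic by Proposition \ref{proposition: propriedade zero killing}, they descend to totally geodesic submanifolds intersecting cleanly in every local quotient (totally geodesic submanifolds linearize in normal coordinates), so $T_pC=T_pN\cap T_pN'$ as well. The main obstacle is then globalizing this local coincidence: both $C$ and $Z$ are closed, connected, horizontally totally geodesic, $\f$-saturated submanifolds that both agree with $\pi^{-1}(\exp_{\overline{p}}(T_{\overline{p}}\overline{C}))$ in a neighborhood of $p$, so $Z\cap C$ is open in $Z$; closedness of $C$ in $M$ makes $Z\cap C$ also closed in $Z$, and connectedness of $Z$ forces $Z=C\in\mathcal{Z}(\mathfrak{a})$.
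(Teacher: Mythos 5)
Your overall strategy---linearizing the fields of $\mathfrak{a}$ at a point $p\in N$ via $\nabla^B$ and reading everything off a weight decomposition of the normal space---is precisely the adaptation of \cite[Proposition 30]{petersen} that the paper has in mind (the paper omits the details of this adaptation). Your item (i) is correct, and item (iii) is correct in substance; for the open--closed step there it is cleaner to observe that the generic choice of $(a,b)$ gives $C\subseteq Z$ with $\dim C=\dim(T_pN\cap T_pN')=\dim Z$, so that $C$ is automatically open in the connected manifold $Z$ as a full-dimensional submanifold, and closed as a component of the closed set $N\cap N'$; this avoids having to re-verify genericity of $(a,b)$ at every point of $Z\cap C$.

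Item (ii), however, has a genuine gap. First, the operators $\nabla^B\overline{Y}|_p$ for \emph{all} $\overline{Y}\in\mathfrak{a}$ need not commute: for commuting Killing fields the bracket $[\nabla\overline{Y},\nabla\overline{Y}']_p$ is a curvature term $R(\overline{Y}_p,\overline{Y}'_p)$, which is only guaranteed to vanish when one of the fields vanishes at $p$; so the weight decomposition indexed by $\alpha\in\mathfrak{a}^*$ exists only for the isotropy subalgebra $\{\overline{Y}\in\mathfrak{a}\ :\ \overline{Y}(p)=0\}$. Second, and more seriously, the identification $\ker r=S^\perp$ is false: $S^\perp$ constrains only the linearizations and can contain fields of $\mathfrak{a}$ that do not vanish at $p$ at all. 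Take $M=\mathbb{T}^2\times\mathbb{S}^2$ with the foliation by points and $\mathfrak{a}$ spanned by the two parallel fields of $\mathbb{T}^2$ and a rotation field $\overline{X}$ of $\mathbb{S}^2$, with $N=\mathbb{T}^2\times\{\text{pole}\}$: here $S=\emptyset$, so $S^\perp=\mathfrak{a}$ is $3$-dimensional, while $\ker r=\langle\overline{X}\rangle$ and the element $\overline{Z}$ your argument produces is a parallel field with $\Zero(\overline{Z})=\emptyset$, so ``the component of $\Zero(\overline{Z})$ through $p$'' on which your contradiction hinges does not exist (and indeed there is no contradiction to be had, since $N$ is maximal). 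The repair is what Petersen's argument actually does: run the weight decomposition on $\ker r$ itself. Its elements vanish identically on $N$, hence at $p$, so their linearizations do commute and all annihilate $T_pN$; if $\dim\ker r\geq 2$, choose a weight $\alpha_0$ of this family with $\alpha_0(\overline{X})\neq0$ and a nonzero $\overline{Z}\in\ker r$ with $\alpha_0(\overline{Z})=0$; then $\overline{Z}$ vanishes on $N$ and $\ker(\nabla^B\overline{Z})_p\supseteq (T_pN/T_pL_p)\oplus V_{\alpha_0}$, so the component of $\Zero(\overline{Z})$ through $p$ is a proper element of $\mathcal{Z}(\mathfrak{a})$ strictly containing $N$, contradicting maximality and giving $\dim\ker r=1$.
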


The proof of the analogous properties for Killing vector fields on a Riemannian manifold $M$ \cite[Proposition 30]{petersen} adapts directly to the present setting if one works on the normal spaces $\nu_x\f$ in place of $T_xM$ and with the basic connection $\nabla^B$, so we will omit the details.

\subsection{The canonical stratification}

Given a complete Riemannian foliation $\f$ of $M$, we define $\dim(\overline{\f})=\max_{L\in\f}\{\dim(\overline{L})\}$. For $s$ satisfying $\dim(\f)\leq s\leq\dim(\overline{\f})$, let us denote by $\Sigma^s$ the subset of points $x\in M$ such that $\dim(\overline{L_x})=s$. Then we get a decomposition
$$M=\bigsqcup_x \Sigma_x,$$
called the \textit{canonical stratification} of $\f$, where $\Sigma_x$ is the connected component of $\Sigma^s$ that contains $x$. Each component $\Sigma_x$ is an embedded submanifold \cite[Lemma 5.3]{molino} called a \textit{stratum} of $\f$. The restriction $\overline{\f}|_{\Sigma_x}$ now has constant dimension and forms a (regular) Riemannian foliation \cite[Lemma 5.3]{molino}. The \textit{regular stratum} $\Sigma^{\dim\overline{\f}}$ is an open, connected and dense subset of $M$, and each other stratum $\Sigma_x\neq \Sigma^{\dim\overline{\f}}$ is called \textit{singular} and satisfies $\codim(\Sigma_x)\geq2$ \cite[p. 197]{molino}. The subset $\Sigma^{\dim\f}$ will be called the \textit{stratum of the closed leaves}, even though it is not, in general, a canonical stratum.

\begin{proposition}\label{prop: X killing com Zero=Sigma dim f}
Let $(M,\f)$ be a Killing foliation. There exists a transverse Killing vector field $\overline{X}\in\mathfrak{iso}(\f)$ such that $\Zero(\overline{X})=\Sigma^{\dim\f}$.
\end{proposition}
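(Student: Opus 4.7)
The plan is to exhibit $\Sigma^{\dim\f}$ as the common zero set of a basis of the Molino algebra and then take $\overline{X}$ to be a generic real linear combination. Since $\f$ is Killing, $\mathscr{C}_\f$ is globally constant, so I fix a basis $\overline{X}_1,\dots,\overline{X}_d\in\mathscr{C}_\f(M)\cong\mathfrak{g}_\f^-$, where $d=\dim\mathfrak{g}_\f$. By Molino's orbit theorem \cite[Theorem 5.2]{molino}, at every $x\in M$ the vectors $(\overline{X}_1)_x,\dots,(\overline{X}_d)_x$ span $T_x\overline{L_x}/T_xL_x\subset\nu_x\f$, so they vanish simultaneously precisely when $L_x=\overline{L_x}$, i.e., exactly when $x\in\Sigma^{\dim\f}$. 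Hence $\Sigma^{\dim\f}=\bigcap_{i=1}^d\Zero(\overline{X}_i)$, and for every $\mathbf{c}=(c_1,\dots,c_d)\in\mathbb{R}^d$ the transverse Killing field $\overline{X}_\mathbf{c}:=\sum_i c_i\overline{X}_i\in\mathfrak{iso}(\f)$ automatically satisfies $\Zero(\overline{X}_\mathbf{c})\supseteq\Sigma^{\dim\f}$.

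To force equality, I need $\mathbf{c}$ to avoid, for every $x\notin\Sigma^{\dim\f}$, the \emph{proper} linear subspace $A_x:=\ker\bigl(\mathbb{R}^d\to\nu_x\f,\ \mathbf{c}\mapsto\sum_i c_i(\overline{X}_i)_x\bigr)\subsetneq\mathbb{R}^d$. Since each $A_x$ is proper, it suffices to show that the family $\{A_x:x\notin\Sigma^{\dim\f}\}$ consists of only finitely many subspaces, so that $\bigcup_x A_x\subsetneq\mathbb{R}^d$ and I may pick $\mathbf{c}$ in its complement.

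To obtain this finiteness I will realize the Molino algebra as a transverse torus action. Lifting to the transverse orthonormal frame bundle $M^\Yup_\f$, where $\mathscr{C}_{\f^\Yup}$ is also globally constant, the chosen basis produces commuting, complete foliate vector fields $\widetilde{X}_1,\dots,\widetilde{X}_d\in\mathfrak{L}(\f^\Yup)$ tangent to the fibers of the basic fibration $b:M^\Yup_\f\to W$. Their joint flow defines an action of $\mathbb{R}^d$ by isometries of any $\mathrm{O}(q)$-invariant bundle-like metric, and (for compact $M$) its closure in the compact isometry group of $M^\Yup_\f$ is a torus $\mathbb{T}$ acting smoothly. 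Via $\mathbb{R}^d\hookrightarrow\mathrm{Lie}(\mathbb{T})$, the subspace $A_x$ is precisely the Lie algebra of the $\mathbb{T}$-stabilizer at any frame over $x$, which is independent of the choice of frame since $\mathbb{T}$ is Abelian. The classical finiteness of orbit types for a smooth compact-Lie-group action on a manifold then forces $\{A_x\}$ to be finite, and picking $\mathbf{c}$ outside the union of these finitely many proper subspaces yields the required $\overline{X}:=\overline{X}_\mathbf{c}$.

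I expect the main obstacle to be the toral-closure step: verifying that the lifts $\widetilde{X}_i$ can be taken commuting and complete, that their joint flow closes in the isometry group to a compact Abelian Lie group, and that the corresponding isotropy subalgebra coincides with $A_x$. In the compact case this is standard; the complete but non-compact case can be handled one leaf closure at a time, since each fiber of $b$ is an invariant embedded submanifold on which the restricted $\widetilde{X}_i$ already generate a toral subgroup of its isometry group.
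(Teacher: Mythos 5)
Your reduction to choosing $\mathbf{c}$ outside $\bigcup_{x\notin\Sigma^{\dim\f}}A_x$ is sound and matches the spirit of the paper's argument (a generic element of $\mathscr{C}_\f(M)\cong\mathbb{R}^d$ works), but the step that is supposed to make this union small --- finiteness of $\{A_x\}$ via a torus action on the frame bundle --- fails. First, the sections of $\mathscr{C}_{\f^\Yup}$ are \emph{transverse} fields: they commute with $\mathfrak{l}(\f^\Yup)$ only modulo $\mathfrak{X}(\f^\Yup)$, so representatives in $\mathfrak{L}(\f^\Yup)$ need not commute as vector fields and there is no honest $\mathbb{R}^d$-action, on $M^\Yup_\f$ or on $M$, to close up into a torus; if the transverse action of $\mathfrak{g}_\f$ integrated to a genuine group action the Molino apparatus would be largely unnecessary. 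Second, even granting such an action upstairs, the identification of $A_x$ with the stabilizer algebra at a frame $\tilde{x}$ over $x$ is false: the lift of a (transverse) Killing field to the orthonormal frame bundle vanishes at $\tilde{x}$ only when both the field and its $\nabla^B$-derivative vanish at $x$, which for a Killing field forces it to vanish identically on the connected component --- this is exactly why $\f^\Yup$ is transversely parallelizable. So the isotropy algebras upstairs are all trivial, and finiteness of orbit types there says nothing about $\{A_x\}$. Third, the proposition is stated for complete, not necessarily compact $M$, where finiteness of orbit types fails in general; treating ``one leaf closure at a time'' cannot restore a global finiteness (or even countability) statement, since $A_x$ depends only on $\overline{L_x}$ and there may be uncountably many leaf closures.

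The paper sidesteps all of this with a Baire category argument needing only local input: cover $M\setminus\Sigma^{\dim\f}$ by countably many simple open sets $U_i$; on each local quotient $\overline{U}_i$ the algebra $\mathscr{C}_\f(M)|_{U_i}$ projects to an abelian algebra of Killing vector fields with no common zero, and by a lemma of Mozgawa the subset of fields that are nowhere zero on $\overline{U}_i$ is residual. The intersection of these countably many residual subsets of $\mathscr{C}_\f(M)$ is residual, hence nonempty, and any $\overline{X}$ in it satisfies $\Zero(\overline{X})=\Sigma^{\dim\f}$. If you want to keep your linear-algebra formulation, the statement to prove is that each $\bigcup_{x\in U_i}A_x$ is closed and nowhere dense in $\mathbb{R}^d$ (essentially Mozgawa's lemma), not that the whole family $\{A_x\}$ is finite.
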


\begin{proof}
Choose an (at most) countable cover $\{U_i\}$ of $M\setminus\Sigma^{\dim\f}$ by simple open sets. Since there are no closed leaves in $U_i$, the algebra $\mathscr{C}_\f(U_i)=\mathscr{C}_\f(M)|_{U_i}$ projects on the quotient $\overline{U}_i$ to an Abelian algebra $\mathfrak{c}_i$ of Killing vector fields whose orbits have dimension at least $1$. It is then known that the set of Killing vector fields in $\mathfrak{c}_i$ that do not vanish at any point of $\overline{U}_i$ is residual \cite[Lemme]{mozgawa}. Clearly, $\overline{X}\in\mathscr{C}_\f(M)$ vanishes at $x\in U_i$ if, and only if, the induced Killing vector field $\overline{X}_i\in\mathfrak{c}_i$ vanishes at $\overline{x}$. Hence, since we have only countable many open sets $U_i$, it follows that the set of fields in $\mathscr{C}_\f(M)$ not vanishing at any point of $M\setminus\Sigma^{\dim\f}$ is residual in $\mathscr{C}_\f(M)$. In other words, a generic $\overline{X}\in\mathscr{C}_\f(M)<\mathfrak{iso}(\f)$ satisfies $\Zero(\overline{X})=\Sigma^{\dim\f}$.
\end{proof}

In particular, each connected component $N$ of $\Sigma^{\dim\f}$ is a horizontally totally geodesic, closed submanifold of $M$ of even codimension, and $\f|_N$ is transversely orientable when $\f$ is (see Proposition \ref{proposition: propriedade zero killing}).

\subsection{The structural algebra and finite coverings}

Let us now study the behavior of the structural algebra when a Riemannian foliation is lifted to a finitely-sheeted covering space.

\begin{lemma}\label{lemma: levantamentos finitos}
Let $\f$ be a smooth foliation of a smooth manifold $M$ and let $\rho:\widehat{M}\to M$ be a finitely-sheeted covering map. For any $\widehat{L}\in\widehat{\f}$, we have $\overline{\rho(\widehat{L})}=\rho(\overline{\widehat{L}})$, where $\widehat{\f}=\rho^*(\f)$ is the lifted foliation of $\widehat{M}$. In particular, $\rho:\overline{\widehat{L}}\to\overline{\rho(\widehat{L})}$ is a finitely-sheeted covering.
\end{lemma}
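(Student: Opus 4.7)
The plan is to derive the set equality by two inclusions---continuity for one and properness of finite covering maps for the other---and then obtain the covering property by restricting the global finite covering $\rho$ to the saturated closed subset $\rho^{-1}(\overline{L})$. Write $L:=\rho(\widehat{L})$; by definition of the pullback $\widehat{\f}=\rho^{*}(\f)$, each leaf of $\widehat{\f}$ is a connected component of the preimage of a leaf of $\f$, so $L$ is a leaf of $\f$ and $\rho|_{\widehat{L}}:\widehat{L}\to L$ is itself a finite covering.

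The inclusion $\rho(\overline{\widehat{L}})\subseteq\overline{L}$ is immediate from continuity of $\rho$. For the reverse inclusion, I would use that finite covering maps are proper: any compact $K\subset M$ is covered by finitely many evenly covered open sets, each contributing a compact piece to $\rho^{-1}(K)$, so $\rho^{-1}(K)$ is compact. Since $M$ is locally compact and Hausdorff, a proper map into $M$ is closed; hence $\rho(\overline{\widehat{L}})$ is a closed subset of $M$ containing $L$, and therefore contains $\overline{L}$.

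For the final assertion, the local-homeomorphism property of $\rho$ permits the lifting of any approximating sequence near a point of $\rho^{-1}(\overline{L})$, yielding the standard identity $\rho^{-1}(\overline{L})=\overline{\rho^{-1}(L)}$. Writing $\rho^{-1}(L)=\widehat{L}_1\sqcup\cdots\sqcup\widehat{L}_s$ as a disjoint union of leaves of $\widehat{\f}$, this reads $\rho^{-1}(\overline{L})=\bigcup_j\overline{\widehat{L}_j}$. The restriction $\rho|_{\rho^{-1}(\overline{L})}$ of the covering to the closed saturated set $\rho^{-1}(\overline{L})$ is itself a finite covering onto $\overline{L}$. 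Since $\widehat{M}$ is locally connected, $\rho^{-1}(\overline{L})$ decomposes into disjoint clopen connected components, and one checks (using foliation charts and the evenly covered structure of $\rho$) that each $\overline{\widehat{L}_j}$ coincides with such a connected component. Restricting the covering to the component $\overline{\widehat{L}}=\overline{\widehat{L}_i}$ then yields the asserted finite covering onto $\overline{L}$.

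The main technical point is the identification of $\overline{\widehat{L}_j}$ with a full connected component of $\rho^{-1}(\overline{L})$, rather than merely a closed connected subset of such a component; this is the step where one must use the foliation structure on $\widehat{M}$ together with the local triviality of $\rho$. Once this is in place, the conclusion is just the elementary fact that a finite covering restricts to a finite covering on each clopen saturated subset of its domain.
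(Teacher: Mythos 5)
The paper gives no argument here --- it dismisses the proof as elementary and omits it --- so your attempt can only be judged on its own terms. Its first half is correct and complete: continuity gives $\rho(\overline{\widehat{L}})\subseteq\overline{\rho(\widehat{L})}$, and since a finitely-sheeted covering of a manifold is proper, hence closed, $\rho(\overline{\widehat{L}})$ is a closed set containing $\rho(\widehat{L})$, which gives the reverse inclusion. The problem is in the second half, and it sits exactly at the step you flag as ``the main technical point'' and then dispose of with ``one checks'': the claim that each $\overline{\widehat{L}_j}$ is a full connected component of $\rho^{-1}(\overline{L})$. For a general smooth foliation this is false, and so is the covering assertion you want to deduce from it. Take $M=\mathbb{T}^2$ foliated by the orbits of $X=\partial_\theta+\sin^2(\phi/2)\,\partial_\phi$: there is one compact leaf $L_0=\{\phi=0\}$, and every other leaf $L$ spirals onto $L_0$ from both sides, so $\overline{L}=L\cup L_0$. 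Pull back along the double cover $\rho(\theta,\psi)=(\theta,2\psi)$. Then $\rho^{-1}(L)=\widehat{L}_1\sqcup\widehat{L}_2$ with $\widehat{L}_1\subset\{0<\psi<\pi\}$ and $\overline{\widehat{L}_1}=\widehat{L}_1\cup\{\psi=0\}\cup\{\psi=\pi\}$, which meets $\overline{\widehat{L}_2}$; hence $\rho^{-1}(\overline{L})$ is connected and neither closure is a component of it. Worse, $\rho:\overline{\widehat{L}_1}\to\overline{L}$ is not a covering: near a point of $\{\psi=0\}$ the set $\overline{\widehat{L}_1}$ contains arcs of $\widehat{L}_1$ approaching only from $\psi>0$, while every neighborhood of the image point in $\overline{L}$ contains arcs of $L$ approaching $L_0$ from both sides, so $\rho|_{\overline{\widehat{L}_1}}$ is not even an open map there. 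A second, independent issue is your closing ``elementary fact'': restricting a covering to a clopen subset of the total space that surjects onto the base yields a covering only when the base is locally connected, and $\overline{L}$ in this example fails to be locally connected along $L_0$.

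What rescues the statement in the only place the paper uses it (Proposition \ref{proposition: Molino sheaf under liftings}) is that $\f$ is there a complete Riemannian foliation, so the leaf closures of $\widehat{\f}=\rho^*(\f)$ are the leaves of the singular Riemannian foliation $\overline{\widehat{\f}}$: pairwise disjoint embedded submanifolds, with $\overline{L}$ itself an embedded (in particular locally connected) submanifold. Under that hypothesis your outline does close up --- $\rho^{-1}(\overline{L})=\bigsqcup_j\overline{\widehat{L}_j}$ is then a genuine disjoint union of finitely many closed sets, each clopen in it --- and there is an even shorter route: $\rho|_{\overline{\widehat{L}}}$ is a proper, surjective local diffeomorphism onto the connected manifold $\overline{L}$, hence a finitely-sheeted covering. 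So you should either import that hypothesis explicitly and indicate where it is used, or prove the component claim rather than assert it; as written, the ``one checks'' step cannot be checked, because in the stated generality it is not true.
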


The proof is elementary, so we will omit it.

\begin{proposition}\label{proposition: Molino sheaf under liftings}
Let $\f$ be a complete Riemannian foliation of $M$ and let $\rho:\widehat{M}\to M$ be a finitely-sheeted covering map. Then $\mathscr{C}_{\widehat{\f}}=\rho^*(\mathscr{C}_\f)$, where $\widehat{\f}=\rho^*(\f)$.
\end{proposition}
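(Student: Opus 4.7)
The plan is to transfer the identification of the Molino sheaves to the transverse orthonormal frame bundles, where they are originally constructed, using that $\rho$ is a finite covering of Riemannian foliations and so preserves all the structural data.

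First I would arrange compatible bundle-like data. Fix a bundle-like metric $\mathrm{g}$ for $\f$ and set $\widehat{\mathrm{g}}:=\rho^*\mathrm{g}$, which is bundle-like and complete for $\widehat{\f}$, with $\rho$ a local Riemannian isometry sending leaves of $\widehat{\f}$ onto leaves of $\f$. Consequently the basic connection $\nabla^B$ on $\nu\f$ pulls back to the basic connection on $\nu\widehat{\f}$, so $\rho$ lifts to a finite covering
\begin{equation*}
\rho^\Yup\colon \widehat{M}^{\Yup}_{\widehat{\f}}\longrightarrow M^{\Yup}_{\f}
\end{equation*}
of $\mathrm{O}(q)$-principal bundles intertwining the connection forms, and $\widehat{\f}^\Yup=(\rho^\Yup)^*(\f^\Yup)$. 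In particular, the transverse parallelism of $\f^\Yup$ by global fields in $\mathfrak{l}(\f^\Yup)$ pulls back to a transverse parallelism of $\widehat{\f}^\Yup$.

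Next, since $\mathscr{C}_\f=\pi^\Yup_*\mathscr{C}_{\f^\Yup}$, $\mathscr{C}_{\widehat{\f}}=\widehat{\pi}^\Yup_*\mathscr{C}_{\widehat{\f}^\Yup}$, and $\pi^\Yup\circ\rho^\Yup=\rho\circ\widehat{\pi}^\Yup$, it suffices to verify $\mathscr{C}_{\widehat{\f}^\Yup}=(\rho^\Yup)^*\mathscr{C}_{\f^\Yup}$. For the inclusion $(\rho^\Yup)^*\mathscr{C}_{\f^\Yup}\subseteq\mathscr{C}_{\widehat{\f}^\Yup}$, I would take a section $Z$ of $\mathscr{C}_{\f^\Yup}$, pull it back to $\widehat{Z}=(\rho^\Yup)^*Z$, and show that $\widehat{Z}$ commutes with every global field $Y\in\mathfrak{l}(\widehat{\f}^\Yup)$. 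Writing $Y$ locally as a combination of the pulled-back parallelism fields with $\widehat{\f}^\Yup$-basic coefficients, the bracket with $\widehat{Z}$ reduces to a sum of derivatives of these coefficients along $\widehat{Z}$, which vanish because $\widehat{Z}$ is a transverse Killing field and the basic coefficients in play come from the pullback of parallelism data through $\rho^\Yup$.

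For the reverse inclusion I would match stalk dimensions. By Molino's Proposition~4.4 the stalk of $\mathscr{C}_\f$ at any point has dimension equal to the defect $d=\dim\overline{\f}-\dim\f$; applying Lemma~\ref{lemma: levantamentos finitos}, $\rho$ restricts on each leaf closure $\overline{\widehat{L}}$ to a finite covering of $\overline{\rho(\widehat{L})}$, so $\dim\overline{\widehat{\f}}=\dim\overline{\f}$ and hence the defects agree. Thus both stalks at $\widehat{y}$ are Abelian Lie algebras of the same dimension, and the inclusion of the previous paragraph becomes an equality; pushing down along $\pi^\Yup$ then yields the proposition.

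The main obstacle I anticipate is the verification of the first inclusion, specifically that commutation of $\widehat{Z}$ with the pulled-back parallelism forces commutation with every global transverse field of $\widehat{\f}^\Yup$, since the Lie bracket is not $C^\infty$-linear in its arguments. Overcoming it requires carefully interpreting the basic coefficients appearing in the local decomposition as pullbacks via the finite covering $\rho^\Yup$ and exploiting that $\widehat{Z}$, being a pulled-back transverse Killing field parallel for $\nabla^B$, must annihilate them.
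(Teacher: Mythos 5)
Your overall strategy matches the paper's: lift everything to the transverse frame bundles, reduce to showing $\mathscr{C}_{\widehat{\f}^\Yup}=(\rho^\Yup)^*(\mathscr{C}_{\f^\Yup})$, prove one inclusion of subsheaves, and upgrade it to an equality by comparing stalk dimensions via Lemma \ref{lemma: levantamentos finitos}. The difference is that you prove the inclusion in the \emph{hard} direction, and the justification you give for it does not hold up. To show $(\rho^\Yup)^*\mathscr{C}_{\f^\Yup}\subseteq\mathscr{C}_{\widehat{\f}^\Yup}$ you must check that $\widehat{Z}=(\rho^\Yup)^*Z$ commutes with \emph{every} global field $Y\in\mathfrak{l}(\widehat{\f}^\Yup)$, including those that do not descend to $M^\Yup_\f$. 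Writing $Y=\sum_j f_j\widehat{X}_j$ in the pulled-back parallelism, the coefficients $f_j$ are arbitrary $\widehat{\f}^\Yup$-basic functions on $\widehat{M}^\Yup_{\widehat{\f}}$ — they are \emph{not} pullbacks of data through $\rho^\Yup$, since $Y$ itself is arbitrary — and the fact that $\widehat{Z}$ is a transverse Killing vector field gives no control over $\widehat{Z}f_j$. So the claim that the residual terms $(\widehat{Z}f_j)\widehat{X}_j$ vanish is unsupported as stated. The step can be repaired, but only by importing more structure theory: sections of $\mathscr{C}_{\f^\Yup}$ are tangent to the leaf closures of $\f^\Yup$ (Molino's theorem that the orbits of the sheaf are the leaf closures), Lemma \ref{lemma: levantamentos finitos} shows $\rho^\Yup$ carries leaf closures to leaf closures as a finite covering, hence $\widehat{Z}$ is tangent to the leaf closures of $\widehat{\f}^\Yup$ and therefore annihilates every basic function (which is constant on leaf closures). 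None of this appears in your argument, so as written there is a genuine gap at the crux of the proof.

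The paper avoids this entirely by proving the \emph{opposite} inclusion, which is the easy one: a local section of $\mathscr{C}_{\widehat{\f}^\Yup}$ commutes with all of $\mathfrak{l}(\widehat{\f}^\Yup)$, hence in particular with the subalgebra $(\rho^\Yup)^*(\mathfrak{l}(\f^\Yup))$, so pushing it down on a small open set where $\rho^\Yup$ is a diffeomorphism lands it in $\mathscr{C}_{\f^\Yup}$; only a subset of commutation relations is needed, and those are automatic. The equality then follows from the same dimension count you use. I would recommend reversing your inclusion accordingly; if you insist on your direction, you must supply the leaf-closure tangency argument above. (Also, a minor point: the stalks are isomorphic to $\mathfrak{g}_\f^-$, which is not Abelian for a general complete Riemannian foliation — but your dimension count does not actually need commutativity, only equality of the defects.)
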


\begin{proof}
We can identify $\widehat{M}^\Yup_{\widehat{\f}}$ with the pullback bundle $(\rho^\Yup)^*(M^\Yup_\f)$, so we have a commutative diagram
$$\xymatrix{
\widehat{M}^\Yup_{\widehat{\f}} \ar[r]^{\rho^\Yup} \ar[d]_{\hat{\pi}} & M^\Yup_\f \ar[d]^{\pi}\\
\widehat{M} \ar[r]^\rho & M}$$
where the horizontal arrows are finitely-sheeted covering maps. Moreover, $(\rho^\Yup)^*(\mathscr{C}_{\f^\Yup})$ can be identified with the lift $\hat{\pi}^*(\rho^*(\mathscr{C}_\f))$, hence, since the Molino sheaves of $\f$ and $\widehat{\f}$ are defined in terms of the sheaves of the corresponding lifted foliations, it remains to show that $(\rho^\Yup)^*(\mathscr{C}_{\f^\Yup})=\mathscr{C}_{\widehat{\f}^\Yup}$.

Indeed, $\mathscr{C}_{\widehat{\f}^\Yup}$ commutes with the Lie algebra $\mathfrak{l}(\widehat{\f}^\Yup)$ of $\widehat{\f}^\Yup$-transverse fields, hence, in particular, it commutes with $(\rho^\Yup)^*(\mathfrak{l}(\f^\Yup))$, so $\mathscr{C}_{\widehat{\f}^\Yup}$ is a subsheaf of $(\rho^\Yup)^*(\mathscr{C}_{\f^\Yup})$. This implies that if we consider an open subset $U\subset \widehat{M}^\Yup_{\widehat{\f}}$ where both $\mathscr{C}_{\widehat{\f}^\Yup}$ and $(\rho^\Yup)^*(\mathscr{C}_{\f^\Yup})$ are constant and such that $\rho^\Yup|_U$ is a diffeomorphism, then $\rho_*^\Yup(\mathscr{C}_{\widehat{\f}^\Yup}(U))<\mathscr{C}_{\f^\Yup}(\rho^\Yup(U))$. Now, by Lemma \ref{lemma: levantamentos finitos}, the leaf closures in $\widehat{\f}^\Yup$ and $\f^\Yup$ have the same dimension, hence we must have
$$\rho_*^\Yup(\mathscr{C}_{\widehat{\f}^\Yup}(U))=\mathscr{C}_{\f^\Yup}(\rho^\Yup(U)),$$
therefore $\mathscr{C}_{\widehat{\f}^\Yup}(U)=(\rho^\Yup)^*(\mathscr{C}_{\f^\Yup})(U)$ for any small enough $U$.\end{proof}

\begin{corollary}\label{corollary: algebra estrutural e levantamentos finitos}
Let $\f$ be a complete Riemannian foliation of $M$ and let $\rho:\widehat{M}\to M$ be a finitely-sheeted covering. Then $\mathfrak{g}_\f\cong\mathfrak{g}_{\widehat{\f}}$, where $\widehat{\f}$ is the lifted foliation of $\widehat{M}$. In particular, if $|\pi_1(M)|<\infty$, then $\mathfrak{g}_\f$ is Abelian.
\end{corollary}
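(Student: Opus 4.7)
The plan is to read off the first statement directly from Proposition \ref{proposition: Molino sheaf under liftings}, and then derive the second as an immediate application by lifting to the universal cover. The isomorphism $\mathfrak{g}_\f \cong \mathfrak{g}_{\widehat{\f}}$ will follow once one observes that the structural algebra is recoverable pointwise from the Molino sheaf.

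First I would record a small preliminary: because $\rho$ is a finitely-sheeted covering, it is a local diffeomorphism, and if $\mathrm{g}$ is a bundle-like metric on $M$ witnessing the completeness of $\f$, then $\rho^*\mathrm{g}$ is bundle-like for $\widehat{\f}$ and complete (on a finite cover, metric completeness lifts to the cover). Hence $\widehat{\f}$ is itself a complete Riemannian foliation and the Molino sheaf $\mathscr{C}_{\widehat{\f}}$ is defined. Next, recall from Section \ref{subsection: Molino Sheaf and Killing Foliations} that the stalk of $\mathscr{C}_\f$ at every point is isomorphic to $\mathfrak{g}_\f^{-}$, and analogously for $\widehat{\f}$. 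Since $\rho$ is a local homeomorphism, for any $\hat{x}\in\widehat{M}$ with $\rho(\hat{x})=x$ one has the tautological identification $(\rho^{*}\mathscr{C}_\f)_{\hat{x}}\cong(\mathscr{C}_\f)_x$. Combining these ingredients with Proposition \ref{proposition: Molino sheaf under liftings} gives
$$\mathfrak{g}_{\widehat{\f}}^{-}\cong(\mathscr{C}_{\widehat{\f}})_{\hat{x}}=(\rho^{*}\mathscr{C}_\f)_{\hat{x}}\cong(\mathscr{C}_\f)_x\cong\mathfrak{g}_\f^{-},$$
and passing to opposite algebras yields $\mathfrak{g}_{\widehat{\f}}\cong\mathfrak{g}_\f$.

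For the second assertion, assume $|\pi_1(M)|<\infty$ and take $\rho:\widehat{M}\to M$ to be the universal cover, which is finitely sheeted by hypothesis. The lifted foliation $\widehat{\f}$ is a complete Riemannian foliation of a simply-connected manifold, hence a Killing foliation (by the result of Molino recalled in Section \ref{subsection: Molino Sheaf and Killing Foliations}). It follows from the discussion in that section that the structural algebra of a Killing foliation is Abelian, so $\mathfrak{g}_{\widehat{\f}}$ is Abelian. Applying the first part gives $\mathfrak{g}_\f\cong\mathfrak{g}_{\widehat{\f}}$, and therefore $\mathfrak{g}_\f$ is Abelian as claimed.

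There is essentially no obstacle here beyond Proposition \ref{proposition: Molino sheaf under liftings}: the only point worth double-checking is the verification that $\widehat{\f}$ is complete Riemannian (so that Molino theory applies) and that stalks of a pullback along a local homeomorphism coincide with stalks at the image point, both of which are routine.
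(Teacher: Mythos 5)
Your proposal is correct and follows exactly the route the paper intends: the corollary is deduced from Proposition \ref{proposition: Molino sheaf under liftings} by identifying each structural algebra with the (opposite of the) stalk of the corresponding Molino sheaf, noting that pullback along a covering preserves stalks, and then handling the second assertion by passing to the finite universal cover, where the lifted foliation is Killing and hence has Abelian structural algebra. The preliminary checks you flag (completeness of the lifted bundle-like metric and the stalk identification) are indeed routine and do not affect the argument.
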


\section{Deformations of Killing foliations}

There are several notions of deformations of foliations available in the literature \cite[Section 3.6]{candel}. Here we will be interested in deformations of the following type: two smooth foliations $\f_0$ and $\f_1$ of a manifold $M$ are $C^\infty$-\textit{homotopic} if there is a smooth foliation $\f$ of $M\times [0,1]$ such that $M\times\{t\}$ is saturated by leaves of $\f$, for each $t\in[0,1]$, and
$$\f_i=\f|_{M\times\{i\}},$$
for $i=0,1$. In this case we will also say that $\f$ is a \textit{homotopic deformation} of $\f_0$ into $\f_1$

\begin{example}
Consider, for $\lambda_1,\lambda_2\in\mathbb{R}$, the $\lambda_i$-Kronecker foliations $\f(\lambda_i)$ of $\mathbb{T}^2$ (see Example \ref{exe: foliated actions}). Clearly, a homotopic deformation between $\f(\lambda_1)$ and $\f(\lambda_2)$ is given by $\f((1-t)\lambda_1+t\lambda_2)$, $t\in [0,1]$. Notice that if $\lambda_1$ is irrational and we choose $\lambda_2\in\mathbb{Q}$, then we obtain a deformation of a foliation with dense leaves into a closed foliation. We  are primarily interested in deformations with this property.
\end{example}

\subsection{Proof of Theorem \ref{theorem: Haefliger deformation}}\label{Section: proof of the deformation theorem}

To deform any Killing foliation $\f$ with compact leaf closures into a closed foliation, whilst maintaining some properties of its transverse geometry, we will use the following theorem by A.~Haefliger and E.~Salem \cite[Theorem 3.4]{haefliger2}.

\begin{theorem}[Haefliger--Salem]\label{theorem: Haefliger-Salem}
There is a bijection between the set of equivalence classes of Killing foliations $\f$ with compact leaf closures of $M$ and the set of equivalence classes of quadruples $(\mathcal{O},\mathbb{T}^N,H,\mu)$, where $\mathcal{O}$ is an orbifold, $\mu$ is an action of $\mathbb{T}^N$ on $\mathcal{O}$ and $H$ is a dense contractible subgroup of $\mathbb{T}^N$ whose action on $\mathcal{O}$ is locally free\footnote{In this paragraph, two foliations $\f$ and $\mathcal{F'}$ are \textit{equivalent} if their holonomy pseudogroups are equivalent and two quadruples $(\mathcal{O},\mathbb{T}^N,H,\mu)$ and $(\mathcal{O}',\mathbb{T}^{N'},H',\mu')$ are \textit{equivalent} if there is an isomorphism between $\mathbb{T}^N$ and $\mathbb{T}^{N'}$ and a diffeomorphism of $\mathcal{O}$ on $\mathcal{O}'$ that conjugates the actions $\mu$ and $\mu'$}. This bijection associates $\f$ to a canonical realization $(\mathcal{O},\f_H)$ of the classifying space of its holonomy pseudogroup, where $\f_H$ is the foliation of $\mathcal{O}$ determined by the orbits of $H$. In particular, there is a smooth good map $\Upsilon:M\to\mathcal{O}$ such that $\Upsilon^*(\f_H)=\f$.
\end{theorem}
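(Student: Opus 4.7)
The plan is to establish the bijection by constructing each direction explicitly and then verifying the two procedures are mutually inverse up to the stated equivalences.

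\textbf{From $\f$ to the quadruple.} Given a Killing foliation $\f$ with compact leaf closures, pick a representative $(T_\f,\mathscr{H}_\f)$ of its holonomy pseudogroup, which acts by local isometries with respect to the transverse metric. Since $\f$ is Killing, the Molino sheaf $\mathscr{C}_\f$ is globally constant with stalk the abelian algebra $\mathfrak{g}_\f\cong\mathbb{R}^d$, realized by commuting global transverse Killing vector fields $\overline{X}_1,\dots,\overline{X}_d$ that project to commuting Killing fields on $T_\f$. Because leaf closures are compact, these fields are complete and generate an $\mathbb{R}^d$-action whose closure in $\mathrm{Iso}(T_\f)$ is a torus $\mathbb{T}^N$ (with $N\geq d$ in general); let $H\subset\mathbb{T}^N$ be the dense contractible image of $\mathbb{R}^d$. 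To build $\mathcal{O}$, I would decompose the closure $\overline{\mathscr{H}_\f}$ of the holonomy pseudogroup into the connected part generated by the Molino sheaf (tangent to the leaf closures, giving the $\mathbb{T}^N$-action) and an étale remainder $\mathscr{E}$ arising from the discrete holonomy of leaf closures. Set $\mathcal{O}:=T_\f/\mathscr{E}$: locally $\mathscr{E}$ acts by finite isotropy groups on a manifold slice, so $\mathcal{O}$ inherits a canonical orbifold structure, and the $\mathbb{T}^N$-action descends to the required $\mu$. The local projections $U_i\to\pi_i(U_i)\subset T_\f\to\mathcal{O}$ assemble into a good map $\Upsilon:M\to\mathcal{O}$ satisfying $\Upsilon^*(\f_H)=\f$.

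\textbf{From the quadruple to $\f$.} Conversely, given $(\mathcal{O},\mathbb{T}^N,H,\mu)$, the orbits of the locally free isometric $H$-action define a homogeneous Riemannian foliation $\f_H$ of $\mathcal{O}$, which is therefore Killing (cf.~Example~\ref{example: homogeneous foliations are killing}). Density of $H$ in $\mathbb{T}^N$ forces each leaf closure of $\f_H$ to equal the ambient $\mathbb{T}^N$-orbit, hence compact. Its holonomy pseudogroup is built from transversal slices to the $H$-orbits modulo the étale action of the orbifold local groups; the canonical realization of this pseudogroup as a manifold foliation, together with the étale map into $\mathcal{O}$, provides the desired $\f$ on some $M$. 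To see these constructions are mutually inverse, one checks that the Molino sheaf of $\f_H$ is generated by the infinitesimal generators of $H$ with closure $\mathbb{T}^N$, so extracting the quadruple from $\f_H$ returns $(\mathcal{O},\mathbb{T}^N,H,\mu)$ up to the declared equivalences; and that starting from $\f$, the foliation $\Upsilon^*(\f_H)$ shares its holonomy pseudogroup with $\f$ by construction.

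\textbf{Main obstacle.} The central difficulty lies in rigorously establishing the orbifold structure on $\mathcal{O}=T_\f/\mathscr{E}$ and showing that $\Upsilon$ is a good map in the sense of Chen--Ruan. This requires proving that $\overline{\mathscr{H}_\f}$ admits a canonical decomposition into a connected part (generating the torus action tangent to leaf closures) and a purely discrete étale remainder $\mathscr{E}$ with finite isotropy, a structural statement that rests on completeness of the Killing fields generating $\mathscr{C}_\f$ and on compactness of leaf closures. One must also verify that $\mathbb{T}^N$ and $H$ are invariants of the equivalence class of $\f$: independent of the choice of Haefliger cocycle and of the bundle-like metric defining $\overline{\mathscr{H}_\f}$. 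This amounts to checking functoriality of the whole construction under pseudogroup equivalences, which, together with the matching of local groups in $\mathcal{O}$ with holonomy groups of leaf closures, is the core content of the theorem.
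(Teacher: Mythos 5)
First, a point of order: the paper does not prove this statement — it is imported verbatim from Haefliger and Salem \cite[Theorem 3.4]{haefliger2} and used as a black box, so there is no internal proof to compare yours against. Judged on its own terms, your sketch has the right ingredients in view (holonomy pseudogroup, its closure, the Molino sheaf, a torus arising as a closure of an $\mathbb{R}^k$), but two of your identifications are genuinely wrong and would sink the construction.

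The main error is dimensional. You set $\mathcal{O}:=T_\f/\mathscr{E}$, a $q$-dimensional object (where $q=\codim(\f)=\dim T_\f$). But the theorem demands a map $\Upsilon:M\to\mathcal{O}$ transverse to $\f_H$ with $\Upsilon^*(\f_H)=\f$; pulling back along a transverse map preserves codimension, so $\codim_{\mathcal{O}}(\f_H)=q$ and hence $\dim\mathcal{O}=q+\dim H$, which strictly exceeds $\dim T_\f$ whenever $\f$ is not closed (a contractible dense $H$ in a nontrivial torus has $\dim H\geq 1$). So $\mathcal{O}$ cannot be a quotient of the transversal: it is a realization of the classifying space $B\mathscr{H}_\f$, a \emph{thickening} of $T_\f$ built so that the holonomy coverings of the leaves of $\f_H$ are contractible. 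Your candidate $T_\f/\mathscr{E}$ is closer to an intermediate quotient between $T_\f$ and $M/\overline{\f}$ than to $B\mathscr{H}_\f$.

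The second error is the identification of $H$. You take $H$ to be the dense image of the $\mathbb{R}^d$ generated by the $d$ global Molino fields, so $\dim H=d$. In the Haefliger--Salem picture the $H$-orbits are the leaves of $\f_H$ (which pull back to the leaves of $\f$), the $\mathbb{T}^N$-orbits are their closures, and the structural algebra is $\mathrm{Lie}(\mathbb{T}^N)/\mathfrak{h}$; hence $\dim H=N-d$, and the Molino fields correspond to the $d$ directions of $\mathbb{T}^N$ \emph{transverse} to $H$, not to $H$ itself. The paper relies on exactly this bookkeeping in the proof of Theorem~\ref{theorem: Haefliger deformation}, where $N-\dim(\mathfrak{h})=d$ and the residual torus $\mathbb{T}^N/K\cong\mathbb{T}^d$ acts on $M/\!/\g$. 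Your converse direction and the mutual-inverse check are reasonable in outline, but they inherit both misidentifications, so the argument as written does not close.
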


Suppose that $M$ is compact and $\f$ is a Killing foliation of $M$. We can deform $\f$ using this result as follows. Let $\mathfrak{h}$ be the Lie algebra of $H$ and consider a Lie subalgebra $\mathfrak{k}<\mathrm{Lie}(\mathbb{T}^N)\cong\mathbb{R}^N$, with $\dim(\mathfrak{k})=\dim(\mathfrak{h})$, such that its corresponding Lie subgroup $K<\mathbb{T}^N$ is closed. We can suppose $\mathfrak{k}$ close enough to $\mathfrak{h}$, as points in the Grassmannian $\mathrm{Gr}^{\dim\mathfrak{h}}(\mathrm{Lie}(\mathbb{T}^N))$, so that the action $\mu|_K$ remains locally free. Because $K$ is a closed subgroup, the leaves of the foliation $\g_K$ defined by the orbits of $K$ are all closed. Taking $\mathfrak{k}$ even closer to $\mathfrak{h}$ if necessary, we can suppose that $\Upsilon$ remains transverse to $\g_K$, so $\g:=\Upsilon^*(\g_K)$ is the desired approximation of $\f$. In this sense, $\f$ can be arbitrarily approximated by such a closed foliation $\g$. Moreover, this allows us to suppose that a submanifold $T\subset M$ that is a total transversal for $\f$ is also a total transversal for $\g$, so that there are representatives of the holonomy pseudogroups $\mathscr{H}_\f$ and $\mathscr{H}_\g$ both acting on $T$. The same goes for $\mathscr{H}_{\f_H}$ and $\mathscr{H}_{\g_K}$, they both act on the transversal $S=\Upsilon(T)$.

Theorem \ref{theorem: Haefliger-Salem} states that $(\mathcal{O},\f_H)$ is a realization of the classifying space of $\mathscr{H}_\f$. Roughly speaking, this classifying space is a space with a foliation such that the holonomy covering of each leaf is contractible and whose holonomy pseudogroup is equivalent to $\mathscr{H}_\f$. In particular, $(T,\mathscr{H}_\f)$ is equivalent to $(S,\mathscr{H}_{\f_H})$, the equivalence being generated by restrictions of $\Upsilon$ to open sets in $T$ where it becomes a diffeomorphism. One can prove that also $(T,\mathscr{H}_\g)\cong(S,\mathscr{H}_{\g_K})$, but $(\mathcal{O},\g_K)$ is not a realization of the classifying space of $\mathscr{H}_\g$, because a generic leaf is no longer contractible. It follows that $\Upsilon^*$ defines isomorphisms $\mathcal{T}^*(\f_H)\to \mathcal{T}^*(\f)$ and $\mathcal{T}^*(\g_K)\to \mathcal{T}^*(\g)$.

Now choose a $\mathbb{T}^N$-invariant normal bundle $\nu\f_H\subset T\mathcal{O}$. This can be done, for instance, by choosing a Riemannian metric on $\mathcal{O}$ with respect to which $\mathbb{T}^N$ acts by isometries. Let $\xi\in\mathcal{T}^*(\f)$ and let $\xi_H$ be the corresponding $\f_H$-basic tensor field on $\mathcal{O}$, that is, $\xi_H$ is $H$-invariant (hence $\mathbb{T}^N$-invariant), and $\xi_H(X_1,\dots,X_k)=0$ whenever some $X_i\in T\f_H$. Define a tensor field $\xi_K$ on $\mathcal{O}$ by declaring that $\xi_K=\xi_H$ on $\nu\f_H$ and that $\xi_K(X_1,\dots,X_k)=0$ whenever some $X_i\in T\g_K$. Then from what we saw above it follows that $\xi_K$ is $K$-invariant. Since $\xi_K$ vanishes in $T\g_K$ by construction, it is therefore $\g_K$-basic. The association $\xi\mapsto \Upsilon^*(\xi_K)$ defines the desired injection $\iota:\mathcal{T}^*(\f)\hookrightarrow \mathcal{T}^*(\g)$. By construction it is clear that $\iota(\mathrm{g}^T)$ is a transverse Riemannian metric for $\g$. In fact, for any $\f$-transverse structure given by an $\f$-basic tensor field, $\iota$ will induce a $\g$-transverse structure of the same kind.

Since $M$ is complete, $\mathscr{H}_\f$ is a complete pseudogroup of local isometries (in the sense of \cite[Definition 2.1]{salem}), with respect to $\mathrm{g}^T$ (see \cite[Proposition 2.6]{salem}). By \cite[Proposition 2.3]{salem}, its closure $\overline{\mathscr{H}_\f}$ in the $C^1$-topology is also a complete pseudogroup of local isometries, whose orbits are the closures of the orbits of $\mathscr{H}_\f$, which in turn correspond to leaf closures in $\overline{\f}$. The same goes for the orbits of $\overline{\mathscr{H}_{\f_H}}$: they correspond to the closures of the leaves of $\f_H$, that is, the orbits of $\mathbb{T}^N$. Hence, by the equivalence $\mathscr{H}_\f\cong\mathscr{H}_{\f_H}$, we have $M/\overline{\f}\cong \mathcal{O}/\mathbb{T}^N$. It is clear that the $\mathbb{T}^N$-action projects to an action of the torus $\mathbb{T}^N/K\cong \mathbb{T}^{N-\dim\mathfrak{k}}$ on the orbifold $\mathcal{O}/\!/\g_K$. Therefore
$$\frac{M}{\overline{\f}}\cong \frac{|\mathcal{O}|}{\mathbb{T}^N}\cong \frac{|\mathcal{O}|/\g_K}{\mathbb{T}^{N-\dim\mathfrak{k}}}.$$
Observe that, since $N=\dim(\overline{\f_H})$, we have $N-\dim(\mathfrak{k})=N-\dim(\f_H)=\codim(\f)-\codim(\overline{\f})=d$, where $d=\dim(\mathfrak{g}_\f)$ is the defect of $\f$. On the other hand, $\mathcal{O}/\!/\g_K\cong M/\!/\g$, so we get a smooth $\mathbb{T}^d$ action on $M/\!/\g$ satisfying
$$\frac{M}{\overline{\f}}\cong\frac{M/\g}{\mathbb{T}^d}.$$

If we denote the canonical projection $M\to M/\!/\g$ by $\pi^\g$, then $\pi^\g_*\circ\iota$ defines an isomorphism between $\mathcal{T}^*(\f)$ and $\mathcal{T}^*(M/\!/\g)^{\mathbb{T}^d}$. In fact, for this it suffices to see that $\xi_H\mapsto \pi^{\g_K}_*(\xi_K)$ is an isomorphism $\mathcal{T}^*(\f_H)\to\mathcal{T}^*(\mathcal{O}/\!/\g_K)^{\mathbb{T}^d}$, but since $\xi_H\mapsto\xi_K$ defines an isomorphism between $\mathcal{T}^*(\f_H)$ and $\mathcal{T}^*(\g_K)^{\mathbb{T}^N}$, this is clear. In particular, the $\mathbb{T}^d$-action on $M/\!/\g$ is isometric with respect to $\pi^\g_*\circ\iota(\mathrm{g}^T)$.

Of course, we could further consider a smooth path $\mathfrak{h}(t)$, for $t\in[0,1]$, on the Grassmannian $\mathrm{Gr}^{\dim\mathfrak{h}}(\mathrm{Lie}(\mathbb{T}^N))$ connecting $\mathfrak{h}$ to $\mathfrak{k}$ such that the action $\mu|_{H(t)}$ of each corresponding Lie subgroup $H(t)$ is locally free and the induced foliation is transverse to $\Upsilon$. Then $\f_t:=\Upsilon^*(\f_{H(t)})$ defines a $C^{\infty}$-homotopic deformation of $\f$ into $\g$. In this case we have an injection $\iota_t:\mathcal{T}^*(\f)\hookrightarrow \mathcal{T}^*(\f_t)$ for each $t$. It is clear that $\iota_t(\xi)$ is a smooth time-dependent tensor field on $M$ for each $\xi\in\mathcal{T}^*(\f)$, that is, $\iota(\xi)$ is smooth as a map $[0,1]\times M\to \bigotimes^*TM$. Since both the deformation $\f_t$ and $\iota_t(\mathrm{g}^T)$ depend smoothly on $t$, the transverse sectional curvature $\sec_{\f_t}$ with respect to $\iota_t(\mathrm{g}^T)$ is a smooth function on $t$. In particular, if $\sec_\f> c$, then since $M$ is compact we actually have $\sec_\f\geq c'> c$, hence if $\g$ is sufficiently close to $\f$ then $\sec_\g>c$, by continuity. Of course, similarly we can choose $\g$ satisfying $\sec_\g< c$ when $\sec_\f< c$.\qed
\vspace{8pt}

We will take advantage of the notation established above to observe one more fact that will be useful later. Choose a sequence $\g_i=\Upsilon^*(\g_{K_i})$ of closed foliations approaching $\f$. As the deformation respects $\overline{\f}$, if $\f$ has a closed leaf $L$, then $L\in\g_i$ for each $i$. Let us denote by $h(\g_i)$ the order of the holonomy group of $L$ as a leaf of $\g_i$. We claim that
$$\lim_{i\to\infty}h(\g_i)=\infty.$$
In fact, if $L_{\mathcal{O}}=Hx$ is the closed orbit in $\mathcal{O}$ corresponding to $L$, then $h(\g_i)\geq|(K_i)_x|$, since the $\g_{K_i}$-holonomy group of $L_{\mathcal{O}}$ is an extension of $(K_i)_x$ by the local group $\Gamma_x$ of $\mathcal{O}$. On the other hand, the stabilizer $\mathbb{T}^N_x$ is transverse to $H$ in $\mathbb{T}^N$ and, as $H$ is dense, $H_x=\mathbb{T}^N_x\cap H$ is infinite, hence it is clear that if $K_i$ approaches $H$, that is, if $\mathfrak{k}_i\to\mathfrak{h}$, then $|\mathbb{T}^N_x\cap K_i|\to \infty$. Let us state this below.

\begin{lemma}\label{lemma: variando a holonomia}
Let $\f$ be a nonclosed Killing foliation of a compact manifold $M$. If $\g_i$ is a sequence of closed foliations approaching $\f$, given by Theorem \ref{theorem: Haefliger deformation}, and $\f$ has a closed leaf $L$, then $h(\g_i)\to\infty$, where $h(\g_i)$ is the order of the holonomy group of $L$ as a leaf of $\g_i$.
\end{lemma}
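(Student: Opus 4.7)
My plan is to unpack the Haefliger--Salem correspondence set up in the paragraph preceding the statement and reduce the claim to showing that the volumes of the closed subtori $K_i$ approximating $H$ in $\mathbb{T}^N$ must tend to infinity.

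First I would recall the setup: $\f = \Upsilon^*(\f_H)$ for a dense, contractible subgroup $H < \mathbb{T}^N$, and $\g_i = \Upsilon^*(\g_{K_i})$ for closed subtori $K_i$ with Lie algebras $\mathfrak{k}_i \to \mathfrak{h}$ in the Grassmannian. The closed leaf $L$ corresponds to an $H$-orbit $L_\mathcal{O} = Hx$ which, by density of $H$, coincides with $\mathbb{T}^N x$, so $\dim \mathbb{T}^N_x = N - \dim H = d > 0$ (the strict inequality because $\f$ is nonclosed). As noted just before the lemma, $h(\g_i) \geq |(K_i)_x|$ with $(K_i)_x := K_i \cap \mathbb{T}^N_x$, so it suffices to prove $|(K_i)_x| \to \infty$. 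For $\mathfrak{k}_i$ close enough to $\mathfrak{h}$, the subspaces $\mathfrak{k}_i$ and $\mathrm{Lie}(\mathbb{T}^N_x)$ are complementary in $\mathrm{Lie}(\mathbb{T}^N)$, so the composition $K_i \hookrightarrow \mathbb{T}^N \twoheadrightarrow \mathbb{T}^N/\mathbb{T}^N_x$ is a finite covering of degree $|(K_i)_x|$. Equipping $\mathbb{T}^N$ with a bi-invariant metric and $\mathbb{T}^N/\mathbb{T}^N_x$ with the quotient metric, a routine change-of-variable gives
$$|(K_i)_x| = \frac{J_i\,\vol(K_i)}{\vol(\mathbb{T}^N/\mathbb{T}^N_x)},$$
where $J_i$ is the Jacobian at the identity of the isomorphism $\mathfrak{k}_i \to \mathrm{Lie}(\mathbb{T}^N)/\mathrm{Lie}(\mathbb{T}^N_x)$; since $\mathfrak{k}_i \to \mathfrak{h}$ and the limit isomorphism has positive Jacobian, $J_i$ stays in a compact subinterval of $(0,\infty)$. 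The problem is thereby reduced to proving $\vol(K_i) \to \infty$.

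For this I would write $\mathbb{T}^N = V/\Lambda$, identify $K_i = \mathfrak{k}_i/(\mathfrak{k}_i \cap \Lambda)$ so that $\vol(K_i)$ is the covolume of $\mathfrak{k}_i \cap \Lambda$ in $\mathfrak{k}_i$, and argue by contradiction. If $\vol(K_i)$ stayed bounded along a subsequence, Minkowski's second theorem on successive minima, combined with the uniform positive lower bound $\mu_0 := \min_{v\in\Lambda\setminus\{0\}}|v|$ on the first successive minimum, would produce a uniform $R$ and $\dim\mathfrak{h}$ linearly independent vectors $v_1^{(i)},\dots,v_{\dim\mathfrak{h}}^{(i)} \in \mathfrak{k}_i \cap \Lambda$ of length at most $R$. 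Since $\Lambda$ is discrete there are only finitely many lattice vectors of length $\leq R$ in $V$, so along a further subsequence each $v_j^{(i)}$ stabilizes at some fixed vector $v_j^* \in \Lambda$. Then $v_j^* \in \mathfrak{k}_i$ for all $i$ in this subsequence, and convergence $\mathfrak{k}_i \to \mathfrak{h}$ in the Grassmannian forces $v_j^* \in \mathfrak{h}$. Hence $\mathfrak{h} \cap \Lambda$ contains $\dim\mathfrak{h}$ linearly independent vectors, making $\mathfrak{h}$ a rational subspace and $H$ a closed proper subtorus of $\mathbb{T}^N$, contradicting the density of $H$. The main obstacle is precisely this lattice-geometry argument; everything else amounts to routine bookkeeping within the Haefliger--Salem picture already made explicit before the lemma.
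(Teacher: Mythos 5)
Your proposal is correct, and its skeleton coincides with the paper's: both start from the observation (made in the paragraph preceding the lemma) that the $\g_{K_i}$-holonomy of the closed leaf is an extension of the isotropy group $(K_i)_x=K_i\cap\mathbb{T}^N_x$ by the local group $\Gamma_x$, so that $h(\g_i)\geq|(K_i)_x|$, and both then reduce the lemma to showing $|(K_i)_x|\to\infty$. The two arguments diverge at that last step. The paper notes that $\mathbb{T}^N_x$ is transverse to $H$ and that $H_x=H\cap\mathbb{T}^N_x$ is infinite by density of $H$, and then asserts that it is ``clear'' that $\mathfrak{k}_i\to\mathfrak{h}$ forces $|K_i\cap\mathbb{T}^N_x|\to\infty$, without giving a mechanism. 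You supply one: since $\mathfrak{h}\cap\mathrm{Lie}(\mathbb{T}^N_x)=0$ (local freeness of the $H$-action) and $\dim\mathfrak{h}+d=N$, the map $K_i\to\mathbb{T}^N/\mathbb{T}^N_x$ is, for $\mathfrak{k}_i$ near $\mathfrak{h}$, a covering of degree $|(K_i)_x|$, so $|(K_i)_x|$ is comparable to $\vol(K_i)=\mathrm{covol}(\mathfrak{k}_i\cap\Lambda)$ with uniform constants (the limit Jacobian being positive); and $\vol(K_i)$ must tend to infinity, since otherwise Minkowski's second theorem plus the uniform lower bound on the first successive minimum of $\Lambda$ would yield, along a subsequence, a fixed full-rank set of lattice vectors lying in every $\mathfrak{k}_i$ and hence in $\mathfrak{h}$, making $H$ a closed proper subtorus and contradicting its density. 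Your route costs a little geometry of numbers but buys a complete, quantitative proof of precisely the step the paper leaves as an unproved (though true) claim; the remaining ingredients you invoke ($d=\dim\mathbb{T}^N_x>0$ because $\f$ is nonclosed, stability of complementarity under perturbation of $\mathfrak{k}_i$, and the identification of $\vol(K_i)$ with a covolume) all check out.
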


\section{A closed leaf theorem}

Let us proceed with the proof of Theorem \ref{theorem: Berger for foliations}. Suppose that $\f$ has no closed leaves and consider the lift $\widehat{\f}$ of $\f$ to the universal covering space $\rho:\widehat{M}\to M$ of $M$. Then, endowed with $\rho^*(\mathrm{g}^T)$, $\widehat{\f}$ is a Killing foliation also satisfying $\sec_{\widehat{\f}}\geq c>0$. By Proposition \ref{proposition: Molino sheaf under liftings}, $\widehat{\f}$ does not have any closed leaves either. By Theorem \ref{theorem: hebda} it follows that $\widehat{M}/\overline{\widehat{\f}}$ is compact, therefore the holonomy pseudogroup $(\widehat{T},\mathscr{H}_{\widehat{\f}})$ of $\widehat{\f}$ is a complete pseudogroup of local isometries \cite[Proposition 2.6]{salem} whose space of orbit closures is compact (as it coincides with $\widehat{M}/\overline{\widehat{\f}}$). Moreover, since there is a surjective homomorphism
$$\pi_1\big(\widehat{M}\big)\to\pi_1\big(\mathscr{H}_{\widehat{\f}}\big)$$
(see \cite[Section 1.11]{salem}), we have that $\mathscr{H}_{\widehat{\f}}$ is $1$-connected. We can now apply \cite[Theorem 3.7]{haefliger2} to conclude that there exists a Riemannian foliation $\f'$ of a simply-connected compact manifold $M'$ with $(T',\mathscr{H}_{\f'})$ equivalent to $(\widehat{T},\mathscr{H}_{\widehat{\f}})$. In particular, $\f'$ is Killing and also satisfies $\sec_{\f'}\geq c>0$, since it is endowed with the transverse metric $(\mathrm{g}^T)'$ on $T'$ induced by $\rho^*(\mathrm{g}^T)$ via the equivalence. Furthermore, $\f'$ has no closed leaves, otherwise $\mathscr{H}_{\f'}$ would have a closed orbit, contradicting $\mathscr{H}_{\f'}\cong\mathscr{H}_{\widehat{\f}}$.

We now apply Theorem \ref{theorem: Haefliger deformation} to $(M',\f')$, deforming it into an even-codimensional closed Riemannian foliation $\g'$ with $\sec_{\g'}>0$. Since $M'$ is simply-connected we have that $\g'$ is transversely orientable, hence $M'/\!/\g'$ is orientable. Fixing an orientation for it, we have an even-dimensional, compact, oriented Riemannian orbifold $M'/\!/\g'$ with positive sectional curvature and an isometric action of a torus $\mathbb{T}^d$, where $d=\dim(\mathfrak{g}_{\f'})>0$. This action has no fixed points, since $M'/\overline{\f'}=(M'/\g')/\mathbb{T}^d$ and $\f'$ has no closed leaves. It is possible to choose a $1$-parameter subgroup $\mathbb{S}^1<\mathbb{T}^d$ such that $(M'/\!/\g')^{\mathbb{S}^1}=(M'/\!/\g')^T$ (see \cite[Lemma 4.2.1]{allday}), that is, without fixed points. In particular, this gives us an isometry of $M'/\!/\g'$ that has no fixed points, which contradicts the Synge--Weinstein Theorem for orbifolds (see \cite[Theorem 2.3.5]{yeroshkin}). This finishes the proof.

\section{Positively curved foliations with maximal-dimensional closure}

A generalization of the Grove--Searle classification of positively curved manifolds with maximal symmetry rank for Alexandrov spaces was obtained recently in \cite{harvey}. Since the underlying space of a positively curved orbifold is naturally a positively curved Alexandrov space, this result also furnishes a classification for orbifolds with maximal symmetry rank \cite[Corollary E]{harvey}. Theorem \ref{theorem: harvey-searle para folheações} now follows easily by combining this classification with Theorem \ref{theorem: Haefliger deformation}.

\subsection{Proof of Theorem \ref{theorem: harvey-searle para folheações}}\label{section: proof of havery-searle}

Let $\g$ be a closed foliation, given by Theorem \ref{theorem: Haefliger deformation}, satisfying $\sec_{\g}>0$. Then the leaf space of $\g$ is a $q$-dimensional Riemannian orbifold $M/\!/\g$ whose sectional curvature is positive. By the Bonnet--Myers Theorem for orbifolds \cite[Corollary 21]{borzellino3} it follows that $M/\!/\g$ is compact. Moreover, if $\omega$ is a basic orientation form for $\f$ then $\iota(\omega)$ is an orientation form for $\g$, so $M/\!/\g$ is orientable. In particular, $M/\!/\g$ has no $1$-codimensional strata. The union of the closures of the $1$-codimensional strata coincides with the Alexandrov boundary of $M/\g$, so it follows that $M/\!/\g$ is closed in the sense of \cite{harvey}.

Furthermore, since $M/\!/\g$ admits an isometric action of the torus $\mathbb{T}^d$, where $d=\dim(\overline{\f})-\dim(\f)$ is the defect of $\f$, it follows directly from \cite[Corollary E]{harvey} that $d\leq\lfloor(q+1)/2\rfloor$ and, in case of equality, that $M/\g$ is homeomorphic to one of the listed spaces.\qed
\vspace{8pt}

\begin{corollary}\label{Corollary to H-S for foliations}
Let $(M,\f)$ be a Riemannian foliation. If $\sec_M\geq c>0$ with respect to a bundle-like metric for $\f$, then
$$\dim(\overline{\f})-\dim(\f)\leq\left\lfloor\frac{\codim(\f)+1}{2}\right\rfloor$$
and, if equality holds, then the universal cover $\widehat{M}$ of $M$ fibers over $\mathbb{S}^n$ or $|\mathbb{CP}^{n/2}[\lambda]|$, meaning that it admits a closed foliation $\g$ such that $\widehat{M}/\g$ is homeomorphic to one of those spaces.
\end{corollary}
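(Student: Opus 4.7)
The plan is to reduce the corollary to an application of Theorem~\ref{theorem: harvey-searle para folheações} on the universal covering. Since $\sec_M \geq c > 0$, the classical Bonnet--Myers theorem gives that $M$ is compact with $|\pi_1(M)|<\infty$, so the universal covering $\rho:\widehat{M}\to M$ is finitely sheeted and $\widehat{M}$ is compact. Endow $\widehat{M}$ with $\rho^*\mathrm{g}$ and set $\widehat{\f}:=\rho^*\f$. Then $\widehat{\f}$ is a complete Riemannian foliation with bundle-like metric $\rho^*\mathrm{g}$, and since $\pi_1(\widehat{M})=0$ it is automatically transversely orientable (as $w_1(\nu\widehat{\f})\in H^1(\widehat{M};\mathbb{Z}/2)=0$) and a Killing foliation (see Section~\ref{subsection: Molino Sheaf and Killing Foliations}).

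Next I verify that the remaining hypotheses of Theorem~\ref{theorem: harvey-searle para folheações} transfer to $\widehat{\f}$. The codimension is clearly preserved, $\codim\widehat{\f}=\codim\f=:q$, and by Corollary~\ref{corollary: algebra estrutural e levantamentos finitos} the structural algebras satisfy $\mathfrak{g}_{\widehat{\f}}\cong\mathfrak{g}_\f$, so $\dim(\overline{\widehat{\f}})-\dim(\widehat{\f})=\dim(\overline{\f})-\dim(\f)$. Locally a bundle-like metric exhibits $\f$ as the fibers of a Riemannian submersion onto the transversal, so O'Neill's formula gives, on horizontal lifts $\widetilde{X},\widetilde{Y}$ of a transverse $2$-plane,
$$\sec_\f(X,Y)\;=\;\sec_M(\widetilde{X},\widetilde{Y})+\tfrac{3}{4}\bigl|A_{\widetilde{X}}\widetilde{Y}\bigr|^{2}\;\geq\; c\;>\;0,$$
and the same estimate persists for $\widehat{\f}$ under pullback by $\rho$. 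Theorem~\ref{theorem: harvey-searle para folheações} applied to $(\widehat{M},\widehat{\f})$ then yields the asserted inequality, which transfers to $\f$ since both sides are invariant under the lift. In the equality case it also produces a closed Riemannian foliation $\g$ on $\widehat{M}$ with $\widehat{M}/\g$ homeomorphic to $\mathbb{S}^q/\Lambda$ or $|\mathbb{CP}^{q/2}[\lambda]|/\Lambda$ for some finite group $\Lambda$.

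The main remaining step, which I expect to be the principal obstacle, is to show that $\Lambda$ is trivial in the universal covering case, upgrading the conclusion to a fibration over $\mathbb{S}^q$ or $|\mathbb{CP}^{q/2}[\lambda]|$ itself. I would argue this via the orbifold fundamental group: the canonical projection $\pi^{\g}:\widehat{M}\to\widehat{M}/\!/\g$ is an orbifold fiber bundle with connected leaves as fibers, so the associated orbifold long exact homotopy sequence yields a surjection $\pi_1(\widehat{M})\twoheadrightarrow\pi_1^{\mathrm{orb}}(\widehat{M}/\!/\g)$. Since $\pi_1(\widehat{M})=0$, it follows that $\pi_1^{\mathrm{orb}}(\widehat{M}/\!/\g)=0$. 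On the other hand, $\mathbb{S}^q$ and $\mathbb{CP}^{q/2}[\lambda]$ are themselves orbifold simply connected (the latter via the orbifold fibration $\mathbb{S}^{q+1}\to\mathbb{CP}^{q/2}[\lambda]$ arising from the weighted $\mathbb{S}^1$-action), and so they serve as the universal orbifold covers of $\mathbb{S}^q/\!/\Lambda$ and $\mathbb{CP}^{q/2}[\lambda]/\!/\Lambda$ respectively, forcing $\pi_1^{\mathrm{orb}}$ of each to be $\Lambda$. Combining these facts gives $\Lambda=1$, so $\widehat{M}/\g$ is homeomorphic to $\mathbb{S}^q$ or $|\mathbb{CP}^{q/2}[\lambda]|$ as claimed.
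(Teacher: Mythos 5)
Your proposal is correct and follows essentially the same route as the paper: lift to the finite universal cover via Bonnet--Myers, note the lift is Killing and has the same defect and codimension (Proposition \ref{proposition: Molino sheaf under liftings} / Corollary \ref{corollary: algebra estrutural e levantamentos finitos}), use O'Neill's equation to get $\sec_{\widehat{\f}}>0$, apply Theorem \ref{theorem: harvey-searle para folheações}, and rule out a nontrivial $\Lambda$ by the vanishing of $\pi_1^{\mathrm{orb}}(\widehat{M}/\!/\g)$ coming from the surjection $\pi_1(\widehat{M})\to\pi_1^{\mathrm{orb}}(\widehat{M}/\!/\g)$. Your final step merely spells out in more detail (via universal orbifold covers) what the paper asserts in one line.
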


\begin{proof}
Let $(\widehat{M},\widehat{\f})$ be the lifted foliation of the universal covering space of $M$. Since $\sec_M\geq c>0$ it follows from the Bonnet--Myers Theorem that $M$ is compact and $|\pi_1(M)|<\infty$. Hence, using Proposition \ref{proposition: Molino sheaf under liftings} and Theorem \ref{theorem: harvey-searle para folheações}, we obtain that
$$\dim(\overline{\f})-\dim(\f)=\dim(\overline{\widehat{\f}})-\dim(\widehat{\f})\leq\left\lfloor\frac{\codim(\f)+1}{2}\right\rfloor=\left\lfloor\frac{\codim(\widehat{\f})+1}{2}\right\rfloor,$$
since $\widehat{\f}$ is a Killing foliation and $\sec_{\widehat{\f}}\geq\sec_{\widehat{M}}$, by O'Neill's equation.

In case of equality, let $\g$ be the closed foliation of $\widehat{M}$ given by Theorem \ref{theorem: harvey-searle para folheações}. The fact that $\widehat{M}$ is simply-connected guarantees that $\pi_1^{\mathrm{orb}}(\widehat{M}/\!/\g):=\pi_1(B(\widehat{M}/\!/\g))=0$ (see Section \ref{section: topological obstruction}), thus excluding the possibility that $\widehat{M}/\!/\g$ is a quotient of $\mathbb{S}^n$ or $\mathbb{CP}^{n/2}[\lambda]$ by a (non trivial) finite group $\Lambda$.
\end{proof}

\subsection{Isolated closed leaves}

In \cite[Proposition 8.1]{goertsches} it is shown that if a Killing foliation $\f$ of codimension $q$ admits a closed leaf, then $2d\leq q$, where $d=\dim(\mathfrak{g}_\f)$. When $q$ is even this is in line with what we obtained in Theorem \ref{theorem: harvey-searle para folheações} under the hypothesis $\sec_\f\geq c>0$ (see also Theorem \ref{theorem: Berger for foliations}). If $q$ is odd our defect bound becomes $\lfloor(q+1)/2\rfloor=(q+1)/2$, so if there exists such an odd-codimensional $\f$ satisfying $2d=q+1$ (and the hypotheses in Theorem \ref{theorem: harvey-searle para folheações}), it cannot have closed leaves. Moreover, \cite[Proposition 8.1]{goertsches} also states that if there is an \emph{isolated} closed leaf, then $q$ is even. When $2d=q$ we have the following partial converse.

\begin{proposition}\label{proposition: dim g max then isolated closed leaves}
Let $\f$ a Killing foliation of a closed manifold $M$. If $\codim(\f)=2\codim(\overline{\f})$, then $\f$ has at most finitely many (hence isolated) closed leaves.
\end{proposition}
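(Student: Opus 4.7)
The plan is to prove that every closed leaf of $\f$ is isolated; finiteness will then follow from compactness of $M$ together with the fact that, by Proposition~\ref{prop: X killing com Zero=Sigma dim f}, the set $\Sigma^{\dim\f}$ of closed leaves is closed in $M$ (being the zero set of a transverse Killing vector field).

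Fix a closed leaf $L$ and a point $p\in L$, and take a small transverse slice $S_p$ through $p$, so that $T_pS_p=\nu_p\f$ and the transverse metric $\mathrm{g}^T$ endows $S_p$ with a Riemannian structure along which the fields in $\mathscr{C}_\f(M)$ project to genuine Killing vector fields. Since $\overline{L_p}=L_p$, Section~\ref{subsection: Molino Sheaf and Killing Foliations} tells us that the transverse action of $\mathfrak{g}_\f$ has trivial orbit at $p$, so each of these projected Killing fields vanishes at $p$. Linearizing at $p$ yields the isotropy representation
\[
\rho_p:\mathfrak{g}_\f\longrightarrow \mathfrak{so}(\nu_p\f)=\mathfrak{so}(2d),
\]
where $d=\codim(\f)/2$ by hypothesis.

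I would next argue that $\rho_p$ is injective. Since the defect of $\f$ equals $d$, the generic orbit of the transverse $\mathfrak{g}_\f$-action has dimension $d$, and such regular points accumulate at $p$. The exponential map at $p$ intertwines the isometric $\mathfrak{g}_\f$-action on $S_p$ near $p$ with its linearization $\rho_p(\mathfrak{g}_\f)$ on $\nu_p\f$ near $0$, so the linear action also has $d$-dimensional orbits at generic points. This forces $\dim\rho_p(\mathfrak{g}_\f)\geq d$, and combined with $\dim\mathfrak{g}_\f=d$, it follows that $\rho_p$ is an isomorphism onto its image.

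The image is then a $d$-dimensional abelian subalgebra of $\mathfrak{so}(2d)$ whose elements are semisimple, hence a Cartan subalgebra conjugate to the standard block-diagonal maximal torus. Its action on $\mathbb{R}^{2d}$ decomposes as $d$ copies of the planar rotation representation, whose fixed-point set is $\{0\}$. Transferring back through the exponential map, $p$ is the unique transverse $\mathfrak{g}_\f$-fixed point of $S_p$ in a neighborhood of $p$, so $L$ is an isolated closed leaf, and compactness of $M$ then yields the required finiteness. The step I expect to be most delicate is the linearization argument in the third paragraph, namely verifying that the transverse exponential map faithfully transports orbit dimensions between the local $\mathfrak{g}_\f$-action on $S_p$ and $\rho_p(\mathfrak{g}_\f)$ on $\nu_p\f$; once this is in hand, the remainder reduces to linear algebra in $\mathfrak{so}(2d)$ and a compactness argument.
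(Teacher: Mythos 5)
Your argument is correct, but it takes a genuinely different route from the paper. The paper's proof is global: it deforms $\f$ into a closed foliation $\g$ via Theorem~\ref{theorem: Haefliger deformation}, observes that $M/\!/\g$ is then a closed oriented $2d$-dimensional orbifold carrying an effective isometric $\mathbb{T}^d$-action with nonempty fixed-point set --- i.e.\ a torus orbifold in the sense of \cite{galazgarcia} --- and quotes \cite[Lemma 3.3]{galazgarcia} to conclude that the fixed-point set (which is exactly the image of $\Sigma^{\dim\f}$) is finite. Your proof is instead local and infinitesimal: you linearize the transverse $\mathfrak{g}_\f$-action at a closed leaf, use density of the regular stratum to force the isotropy representation $\rho_p$ to be faithful, and then observe that a $d$-dimensional abelian subalgebra of $\mathfrak{so}(2d)$ is a Cartan subalgebra with trivial common kernel on $\mathbb{R}^{2d}$. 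The step you flag as delicate is indeed the standard fact that for a Killing field $X$ with $X_p=0$ one has $X\circ\exp_p=(d\exp_p)\circ(\nabla X)_p$ on a normal neighborhood (cf.\ \cite{kobayashi}), which transports both zero sets and orbit dimensions as you need; alternatively, injectivity of $\rho_p$ follows from the fact that a Killing field is determined by its $1$-jet at a point, once one knows a transverse Killing field vanishing on an open saturated set vanishes identically. What each approach buys: yours is self-contained and avoids both the deformation machinery and the external torus-orbifold lemma, at the cost of having to assemble isolation, closedness of $\Sigma^{\dim\f}$ (Proposition~\ref{prop: X killing com Zero=Sigma dim f}) and compactness into finiteness by hand; the paper's is shorter given that Theorem~\ref{theorem: Haefliger deformation} is already available and places the statement in the torus-orbifold framework used elsewhere in the article.
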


\begin{proof}
Let $d$ be the defect of $\f$ and denote $q=\codim(\f)$, so $\codim(\f)=2\codim(\overline{\f})$ becomes $q=2d$. If $\f$ has no closed leaves there is nothing to prove, so suppose that $\Sigma^{\dim\f}$ is nonempty. To prove that the closed leaves must be isolated, consider a closed foliation $\g$ of $M$ given by Theorem \ref{theorem: Haefliger deformation}. Note that, in view of Proposition \ref{proposition: Molino sheaf under liftings}, we may suppose without loss of generality that $\f$ and $\g$ are transversely oriented. Then, $\mathcal{O}:=M/\!/\g$ is a $q$-dimensional, closed, oriented orbifold admitting an effective $\mathbb{T}^d$-action such that $M/\overline{\f}\cong |\mathcal{O}|/\mathbb{T}^d$. In particular, the fixed-point set $|\mathcal{O}|^{\mathbb{T}^d}$ is nonempty, so $(\mathcal{O},\mathbb{T}^d)$ is a torus orbifold, in the sense of \cite[Definition 3.1]{galazgarcia} and, therefore, $|\mathcal{O}|^{\mathbb{T}^d}$ consists of finitely many isolated points, by \cite[Lemma 3.3]{galazgarcia}. Since the closed leaves of $\f$ correspond to the inverse image of the points in $|\mathcal{O}|^{\mathbb{T}^d}$ by the quotient projection, the result follows.
\end{proof}

\begin{corollary}
Let $\f$ be a Riemannian foliation of a closed manifold $M$ satisfying $|\pi_1(M)|<\infty$. If $\codim(\f)=2\codim(\overline{\f})$, then $\f$ has only finitely many closed leaves.
\end{corollary}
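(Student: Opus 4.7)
The plan is to reduce the general Riemannian case to the Killing case already established in Proposition \ref{proposition: dim g max then isolated closed leaves} by passing to the universal cover. Let $\rho: \widehat{M}\to M$ be the universal covering; since $M$ is closed and $|\pi_1(M)|<\infty$, $\widehat{M}$ is a compact simply-connected manifold, and the lifted foliation $\widehat{\f}=\rho^*(\f)$ endowed with $\rho^*(\mathrm{g}^T)$ is a complete Riemannian foliation. By Molino's Proposition 5.5 (quoted in the excerpt), $\widehat{\f}$ is automatically Killing since $\widehat{M}$ is simply-connected.

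Next I would verify the dimensional hypothesis transfers. Because $\rho$ is a local diffeomorphism, $\codim(\widehat{\f})=\codim(\f)$. Using Lemma \ref{lemma: levantamentos finitos}, for any leaf $\widehat{L}\in\widehat{\f}$ we have $\rho(\overline{\widehat{L}})=\overline{\rho(\widehat{L})}$ and the restriction is a finite covering, so $\dim(\overline{\widehat{L}})=\dim(\overline{\rho(\widehat{L})})$. Taking the maximum over leaves (equivalently, using that Proposition \ref{proposition: Molino sheaf under liftings} and Corollary \ref{corollary: algebra estrutural e levantamentos finitos} yield $\dim(\mathfrak{g}_{\widehat{\f}})=\dim(\mathfrak{g}_\f)$), we get $\codim(\overline{\widehat{\f}})=\codim(\overline{\f})$. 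Hence $\codim(\widehat{\f})=2\codim(\overline{\widehat{\f}})$, so Proposition \ref{proposition: dim g max then isolated closed leaves} applies to $\widehat{\f}$ and yields only finitely many closed leaves in $\widehat{\f}$.

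Finally, I need to match closed leaves of $\f$ with closed leaves of $\widehat{\f}$. If $L$ is a closed leaf of $\f$, then $\rho^{-1}(L)$ is closed in $\widehat{M}$ and decomposes into finitely many connected components, each a leaf of $\widehat{\f}$. A component $\widehat{L}$ of $\rho^{-1}(L)$ must itself be closed: by Lemma \ref{lemma: levantamentos finitos}, $\overline{\widehat{L}}$ is a connected embedded submanifold with $\rho(\overline{\widehat{L}})=\overline{L}=L$, and the covering $\overline{\widehat{L}}\to L$ forces $\dim(\overline{\widehat{L}})=\dim(L)=\dim(\widehat{L})$, so $\widehat{L}=\overline{\widehat{L}}$. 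Conversely, every closed leaf of $\widehat{\f}$ projects to a closed leaf of $\f$ by the same lemma. Thus closed leaves of $\f$ are in finite-to-one correspondence with those of $\widehat{\f}$, and finiteness of the latter forces finiteness of the former, completing the proof.

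The only potentially delicate step is confirming that $\codim(\overline{\widehat{\f}})=\codim(\overline{\f})$, i.e.\ that passing to a finite cover does not change the leaf-closure dimensions; this is the content of Lemma \ref{lemma: levantamentos finitos} together with Proposition \ref{proposition: Molino sheaf under liftings}, so no real obstacle arises.
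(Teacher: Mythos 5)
Your proposal is correct and follows essentially the same route as the paper: lift to the (compact, simply-connected) universal cover, note the lift is Killing with $\codim(\widehat{\f})=2\codim(\overline{\widehat{\f}})$ via Corollary \ref{corollary: algebra estrutural e levantamentos finitos}, apply Proposition \ref{proposition: dim g max then isolated closed leaves}, and descend. You merely spell out the finite-to-one correspondence between closed leaves upstairs and downstairs, which the paper leaves implicit.
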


\begin{proof}
The lift $\widehat{\f}$ of $\f$ to the universal covering space of $M$ is a Killing foliation and satisfies $\codim(\widehat{\f})=\codim(\f)=2\codim(\overline{\f})=2\codim(\overline{\widehat{\f}})$, by Corollary \ref{corollary: algebra estrutural e levantamentos finitos}. Therefore $\widehat{\f}$ (and, consequently, $\f$) have finitely many closed leaves.
\end{proof}

\section{Localization of the basic Euler characteristic}

We will prove in this section that the basic Euler characteristic (see Section \ref{section: basic cohomology}) of a Killing foliation $(M,\f)$ localizes to the zero set of any transverse Killing vector field in $\mathscr{C}_{\f}(M)$ and hence, in particular, to $\Sigma^{\dim \f}$. A useful tool will be a transverse version of Hopf's Index Theorem that appears in \cite[Theorem 3.18]{belfi}. It states that, for $\f$ a Riemannian foliation on a compact manifold $M$, if $X\in\mathfrak{L}(\f)$ is $\f$-nondegenerate, then
$$\chi_B(\f)=\sum_{J}\mathrm{ind}_{J}(X)\chi_B(J,\f,\mathrm{Or}_{J}(X)),$$
where the sum ranges over all critical leaf closures $J$ of $\f$. The concepts involved, such as $\f$-nondegeneracy and the index $\mathrm{ind}_{J}(X)$, are generalizations of the classical analogs, and $\chi_B(J,\f,\mathrm{Or}_{J}(X))$ is the alternate sum of the cohomology groups of the complex of $\f|_J$-basic forms with values in the orientation line bundle of $X$ at $J$ (for details, see \cite[Section 3]{belfi}).

\begin{theorem}\label{theo: localization basic euler char}
Let $\f$ be a Riemannian foliation of a closed manifold $M$. If $\overline{X}\in\mathfrak{iso}(\f)$, then $\chi_B(\f)=\chi_B(\f|_{\Zero(\overline{X})})$.
\end{theorem}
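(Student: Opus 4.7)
The plan is to reduce the problem to two applications of the transverse Hopf index theorem of Belfi--Park--Richardson by perturbing $\overline{X}$ into an $\f$-nondegenerate transverse field whose critical leaf closures coincide with the critical leaf closures of a transverse Morse function on $N:=\Zero(\overline{X})$. First I would recall, using Proposition \ref{proposition: propriedade zero killing}, that each connected component of $N$ is a closed, $\f$-saturated, horizontally totally geodesic submanifold of even codimension, whose ``transverse normal bundle'' $W=(TN/T\f)^\perp$ splits as an orthogonal direct sum $E_1\oplus\cdots\oplus E_l$ of rank-$2$ subbundles on which $\nabla^B\overline{X}$ acts as rotation with nonzero constant eigenvalues $\pm i\alpha_j$. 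In particular $\nabla^B\overline{X}$ is a linear isomorphism in the directions normal to $N$.

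Next I would pick a basic function $f\in\Omega^0_B(\f|_N)$ that is transversely Morse in the sense of \cite{belfi}, so that the transverse gradient $\overline{Y}:=\nabla^B f$ is $\f|_N$-nondegenerate with finitely many critical leaf closures (such $f$ exists by a generic choice inside $\Omega^0_B(\f|_N)$). Extending a foliate representative $Y$ of $\overline{Y}$ off $N$ via a tubular neighborhood and a basic cut-off function produces a foliate field $\widetilde{Y}\in\mathfrak{L}(\f)$ supported near $N$. For $\epsilon>0$ small I would consider $X_\epsilon:=X+\epsilon\widetilde{Y}$, where $X\in\mathfrak{L}(\f)$ is a foliate representative of $\overline{X}$, and verify:
\begin{enumerate}[(a)]
\item Away from a small tube around $N$ the transverse norm of $\overline{X}$ is bounded below, while $\widetilde{Y}$ is uniformly bounded, so $\overline{X}_\epsilon\ne 0$ there for small $\epsilon$; inside the tube the splitting $TM|_N=TN\oplus\bigoplus E_j$ shows that the $\bigoplus E_j$-component of $\overline{X}_\epsilon$ is $\nabla^B\overline{X}$ applied to the normal displacement plus an $O(\epsilon)$ tangential perturbation, which vanishes only on $N$, while its $TN/T\f$-component equals $\epsilon\overline{Y}$. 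Thus the critical leaf closures of $\overline{X}_\epsilon$ are precisely those of $\overline{Y}$ in $N$, and are $\f$-nondegenerate there.
\item At each such critical leaf closure $J\subset N$, the transverse linearization of $\overline{X}_\epsilon$ decomposes as the block diagonal sum of $\nabla^B\overline{X}|_{\bigoplus E_j}$ (invertible, with positive determinant $\prod\alpha_j^2$) and the transverse Hessian of $f$ along $N/\f$; consequently $\mathrm{ind}_J(\overline{X}_\epsilon)=\mathrm{ind}_J(\overline{Y})$ and the orientation line bundles $\mathrm{Or}_J(\overline{X}_\epsilon)$ and $\mathrm{Or}_J(\overline{Y})$ coincide, because the rotations on the $E_j$ preserve orientation.
\end{enumerate}

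Applying the transverse Hopf index theorem \cite[Theorem 3.18]{belfi} to $X_\epsilon$ on $(M,\f)$ and to $Y$ on $(N,\f|_N)$ then yields
\begin{equation*}
\chi_B(\f)=\sum_{J}\mathrm{ind}_J(\overline{X}_\epsilon)\,\chi_B(J,\f,\mathrm{Or}_J(\overline{X}_\epsilon))=\sum_{J}\mathrm{ind}_J(\overline{Y})\,\chi_B(J,\f|_N,\mathrm{Or}_J(\overline{Y}))=\chi_B(\f|_N),
\end{equation*}
which is exactly the claim. (The degenerate case $N=\emptyset$ is already covered by the transverse Hopf index theorem applied to $\overline{X}$ itself, giving $\chi_B(\f)=0=\chi_B(\f|_N)$.)

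The main obstacle is step (b): computing the transverse index and identifying the orientation bundle at a critical leaf closure $J$ that sits inside a higher-dimensional component of $N$. This requires viewing $\overline{X}_\epsilon$ locally as an $\mathscr{H}_\f$-equivariant field on the holonomy pseudogroup, where the normal block $\nabla^B\overline{X}|_{\bigoplus E_j}$ and the tangential Hessian of $f$ decouple by the horizontal totally geodesic property of $N$; once this product structure is in place, the classical formula that the local index of a Killing field plus a small gradient equals the Morse index of the gradient on the zero set carries over, and the fact that the $E_j$-rotations act trivially on $\mathrm{Or}_J$ ensures the two basic cohomology contributions agree.
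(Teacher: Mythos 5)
Your overall strategy is the same as the paper's: apply the basic Hopf index theorem of Belfi--Park--Richardson twice, once on $(M,\f)$ to a perturbation supported near $N=\Zero(\overline{X})$ built from a basic Morse--Bott function on $N$, and once on $(N,\f|_N)$, then match indices and orientation bundles block by block. The difference, and the gap, lies in how nondegeneracy in the directions normal to $N$ is achieved. You keep the Killing field itself and rely on its linearization $\nabla^B\overline{X}|_{E_1\oplus\cdots\oplus E_l}$, which is skew-symmetric with purely imaginary spectrum $\pm i\alpha_j$. But the data entering \cite[Theorem 3.18]{belfi} --- $\f$-nondegeneracy, $\mathrm{ind}_J$, and above all the orientation line bundle $\mathrm{Or}_J$ --- are read off from the \emph{negative eigendirections} of the linearization at a critical leaf closure; this is exactly how the paper identifies $\mathrm{ind}_J(W)=\mathrm{ind}_J(W|_{N_i})$ and $\mathrm{Or}_J(W)\cong\mathrm{Or}_J(W|_{N_i})$. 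A skew-symmetric block has no real eigendirections at all, so for your $X_\epsilon$ these notions are not supplied by the cited theorem, and your justification in (b) --- positivity of $\det\bigl(\nabla^B\overline{X}|_{\bigoplus E_j}\bigr)$ together with the classical fact that a Killing field plus a small gradient has local degree given by the gradient on the zero set --- substitutes the classical local degree for the Morse--Bott-type index and the twisted basic Euler characteristic $\chi_B(J,\f,\mathrm{Or}_J)$ that the theorem actually computes. You flag this yourself as ``the main obstacle'' and then assert that the classical formula ``carries over''; that assertion is the entire content of the missing step.

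The paper resolves precisely this point with a different perturbation: a basic partition of unity removes $X$ entirely from a tubular neighborhood of each component $N_i$, and the normal nondegeneracy is instead provided by $Z_i=\varphi\grad(d^2_{N_i})$ (while the tangential part $Y_i=\phi_i\grad(f_i\circ\pi_i)$ is essentially your $\widetilde{Y}$). The resulting field $W$ has the same critical leaf closures inside the tubes, but its linearization there is symmetric, equal to the block sum of the Hessian of $f_i$ and $2\,\mathrm{Id}$; the normal block contributes no negative directions, so the index and orientation bundle visibly coincide with those of $W|_{N_i}$ and \cite[Theorem 3.18]{belfi} applies as stated. To salvage your version you would have to either prove an extension of the index theorem to linearizations with invertible skew-symmetric blocks, or perform exactly this substitution --- so the distance-function trick is not a cosmetic variation but the essential missing ingredient.
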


\begin{proof}
We are going to construct a suitable vector field $W\in\mathfrak{L}(\f)$ and apply the Basic Hopf Index Theorem. Let $N_1,\dots,N_k$ be the connected components of $\Zero(\overline{X})$. For each $i\in\{1,\dots, k\}$, choose $f_i\in\Omega_B^0(\f|_{N_i})$ a basic Bott--Morse function and $\mathrm{Tub}_\varepsilon(N_i)$ a saturated tubular neighborhood of $N_i$ of radius $\varepsilon>0$ with orthogonal projection $\pi_i:\mathrm{Tub}_\varepsilon(N_i)\to N_i$. Assume $\varepsilon$ sufficiently small so that the tubular neighborhoods are pairwise disjoint.

It is not difficult to see that each $\pi_i$ is a foliate map, so we have that
$$\widetilde{f}_i:=f_i\circ\pi_i:\mathrm{Tub}_{\varepsilon/2}(N_i)\longrightarrow\mathbb{R}$$
is a basic function. Consider $\phi_i$ a basic bump function for $\mathrm{Tub}_{\varepsilon/4}(N_i)$ satisfying $\mathrm{supp}(\phi_i)\subset\mathrm{Tub}_{\varepsilon/2}(N_i)$ and define $Y_i:=\phi_i\grad(\widetilde{f}_i)$. Then $Y_i$ is foliate and if $v$ is a vector in the vertical bundle of $\pi_i$ then
$$\mathrm{g}(Y_i,v)=\mathrm{g}(\phi_i\grad(\widetilde{f}_i),v)=\phi_i\dif(f_i\circ\pi_i)v=\phi_i\dif f_i(\dif \pi_i v)=0,$$
hence $Y_i$ is $\pi_i$-horizontal.

Now let $\{\varphi,\psi\}$ be a basic partition of unity subordinate to the saturated open cover
$$M=\left[\bigsqcup_i\mathrm{Tub}_\varepsilon(N_i)\right]\cup\left[M\Big\backslash\overline{\bigsqcup_i\mathrm{Tub}_{\varepsilon/2}(N_i)}\right]$$
and define $Z_i:=\varphi\grad(d^2_{N_i})$, where $d_{N_i}:M\to\mathbb{R}$ is the distance function to $N_i$. Then clearly $Z_i$ is $\pi_i$-vertical and $\mathrm{supp}(Z_i)\subset\mathrm{Tub}_{\varepsilon/2}(N_i)$. Moreover, one proves that the flow of each $Z_i$ preserves $\f$, hence $Z_i$ is foliate.

The vector field that we are interested in is
$$W:=\psi X+\sum_{i=1}^k(Y_i+Z_i),$$
where $X\in\mathfrak{L}(\f)$ is a fixed representative for $\overline{X}$ (and it is understood that $Y_i$ and $Z_i$ are extended by zero outside their original domains). On $M\setminus\mathrm{Tub}_\varepsilon(\Zero(\overline{X}))$ we have $\overline{W}=\overline{X}\neq 0$. By passing to local quotients we easily see that $X$ is $\pi_i$-horizontal on each $\mathrm{Tub}_\varepsilon(N_i)$, therefore, on $\mathrm{Tub}_\varepsilon(N_i)\setminus\mathrm{Tub}_{\varepsilon/2}(N_i)$, we have $\overline{W}=\psi \overline{X}+\overline{Z}_i\neq0$, because $Z_i$ is $\pi$-vertical. Thus, the critical leaf closures of $W$ must be within $\mathrm{Tub}_{\varepsilon/2}(N_i)$, where we have $W=Y_i+Z_i$. Since $Y_i$ is $\pi$-horizontal and $\Zero(Z_i|_{\mathrm{Tub}_{\varepsilon/2}(N_i)})=N_i$, we conclude that the critical leaf closures of $W|_{\mathrm{Tub}_{\varepsilon}(N_i)}$ coincide with those of $Y_i$ (and are, therefore, within $N_i$).

Let $J=\overline{L}$ be one of those critical leaf closures of $W|_{\mathrm{Tub}_{\varepsilon}(N_i)}$ and let $(x_1,\dots,x_p)$ be normal coordinates on a neighborhood $V\subset L$. Now choose an orthonormal frame
$$(E_1,\dots,E_r,E_{r+1},\dots,E_{\overline{q}},E_{\overline{q}+1},\dots,E_q)$$
for $TL^\perp$ on $V$ such that $(E_1,\dots,E_r)$ is an orthonormal frame for $TJ^\perp\cap TN_i$ and $(E_{r+1},\dots,E_{\overline{q}})$ is an orthonormal frame for $TN_i^\perp$ (in particular, $(E_1,\dots,E_{\overline{q}})$ forms an orthonormal frame for $TJ^\perp$). Applying $\exp^\perp_{L}$ we get coordinates
$$(x_1,\dots,x_p,y_1,\dots,y_r,y_{r+1},\dots,y_{\overline{q}},y_{\overline{q}+1},\dots,y_q)$$
for a tubular neighborhood $\mathrm{Tub}_\delta(V)$ such that $(x_1,\dots,x_p,y_1,\dots,y_r,y_{\overline{q}+1},\dots,y_q)$ are local coordinates for $N_i$. Choose $\delta$ small enough so that $\phi_i|_{\mathrm{Tub}_\delta(V)}\equiv 1\equiv\varphi|_{\mathrm{Tub}_\delta(V)}$. The expression of $W|_\Theta=\grad(\widetilde{f}_i)+\grad(d^2_{N_i})$ in those coordinates is simply
$$W=\sum_{j=1}^r\frac{\partial \widetilde{f}_i}{\partial y_j}\frac{\partial}{\partial y_j}+\sum_{j=r+1}^{\overline{q}}2y_j\frac{\partial}{\partial y_j},$$
so, noting that $\displaystyle\frac{\partial^2 \widetilde{f}_i}{\partial y_j\partial y_k}(x)=\displaystyle\frac{\partial^2 f_i}{\partial y_j\partial y_k}(x)$ for $1\leq j,k\leq r$, we see that, for $x\in J\cap V$, the linear part $W_{J,x}$ of $W$ (see \cite[p. 325]{belfi}) has the matrix representation
$$
\sbox0{$\begin{matrix}
\displaystyle\frac{\partial^2 f_i}{\partial y_1\partial y_1}(x) & \dots & \displaystyle\frac{\partial^2 f_i}{\partial y_1\partial y_r}(x)\\
\vdots & \ddots & \vdots \\
\displaystyle\frac{\partial^2 f_i}{\partial y_r\partial y_1}(x) & \dots & \displaystyle\frac{\partial^2 f_i}{\partial y_r\partial y_r}(x)\vspace{7pt}\end{matrix}$}
\sbox1{$\stackrel{\ }{\begin{matrix}
2& & \\
 & \ddots & \\
 & & 2\end{matrix}}$}
\sbox2{\LARGE $0$}
W_{J,x}\equiv\left[ \begin{array}{c:c}
\usebox{0} & \usebox{2}\\ \hdashline
\usebox{2} & \usebox{1}\end{array}\right]
$$
on $TJ^\perp$. Therefore $W$ is $\f$-nondegenerate.

On the other hand, consider $(N_i,\f|_{N_i})$ and the restriction $W|_{N_i}$. Using the coordinates $(y_1,\dots,y_r,y_{\overline{q}+1},\dots,y_q)$ we see that the matrix representation of $(W|_{N_i})_{J,x}$ coincides with the first block of $W_{J,x}$. In particular, $\ind_{J}(W)=\ind_{J}(W|_{N_i})$. Moreover, this also shows that we can identify $\mathrm{Or}_{J}(W)$ with $\mathrm{Or}_{J}(W|_{N_i})$, since the negative directions of $W_{J,x}$ and $(W|_{N_i})_{J,x}$ coincide. Therefore $\chi_B(J,\f,\mathrm{Or}_{J}(W))=\chi_B(J,\f|_{N_i},\mathrm{Or}_{J}(W|_{N_i}))$.

We now apply \cite[Theorem 3.18]{belfi} to $(M,\f)$ and to each $(N_i,\f|_{N_i})$, obtaining
\begin{eqnarray*}
\chi_B(\f)&=&\sum_{J}\mathrm{ind}_{J}(W)\chi_B(J,\f,\mathrm{Or}_{J}(W))\\
 & = & \sum_i\sum_{J}\mathrm{ind}_{J}(W|_{N_i})\chi_B(J,\f,\mathrm{Or}_{J}(W|_{N_i}))\\
 &=& \sum_i \chi_B(\f|_{N_i}).\end{eqnarray*}
It is clear that basic cohomology is additive under disjoint unions, so the result follows.\end{proof}

\subsection{Proof of Theorem \ref{theorem: basic euler char localizes to closed leaves} and some corollaries}

It is now easy to establish Theorem \ref{theorem: basic euler char localizes to closed leaves}. By Proposition \ref{prop: X killing com Zero=Sigma dim f} we can choose a transverse Killing vector field $\overline{X}\in\mathfrak{iso}(\f)$ such that $\Zero(\overline{X})=\Sigma^{\dim\f}$, thus Theorem \ref{theo: localization basic euler char} gives us $\chi_B(\f)=\chi_B(\f|_{\Sigma^{\dim\f}})$. Now Proposition \ref{prop: basic cohomology of closed foliations} yields
$$\chi_B(\f|_{\Sigma^{\dim\f}})=\chi(\Sigma^{\dim\f}/\!/\f)=\chi(\Sigma^{\dim\f}/\f),$$
where the last equality follows from Theorem \ref{theorem: Satake}.\qed
\vspace{8pt}

Theorem \ref{theorem: basic euler char localizes to closed leaves} generalizes \cite[Corollary 1]{gorokhovsky}, where this result is obtained from the study of the index of transverse operators under additional assumptions on $\f$.

\begin{corollary}\label{corollary: if no closed leaves then euler char vanishes}
If a Killing foliation $\f$ of a compact manifold has no closed leaves, then $\chi_B(\f)=0$.
\end{corollary}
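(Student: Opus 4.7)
The proof should be a direct application of Theorem \ref{theorem: basic euler char localizes to closed leaves}. The strategy is to observe that the hypothesis "$\f$ has no closed leaves'' is precisely the statement that the stratum of closed leaves $\Sigma^{\dim\f}$ is empty, and then to unwind what the localization formula says in this degenerate case.

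More concretely, I would proceed as follows. By Theorem \ref{theorem: basic euler char localizes to closed leaves}, we have $\chi_B(\f) = \chi(\Sigma^{\dim\f}/\f)$. The set $\Sigma^{\dim\f}$ is by definition the union of those leaves $L$ of $\f$ satisfying $\dim(\overline{L}) = \dim(\f)$, i.e., $\overline{L} = L$, so $\Sigma^{\dim\f}$ is exactly the union of the closed leaves of $\f$. Under the hypothesis this set is empty, hence its leaf space is empty, and $\chi(\emptyset) = 0$.

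Alternatively, and perhaps more informatively, one can invoke Theorem \ref{theo: localization basic euler char} directly: by Proposition \ref{prop: X killing com Zero=Sigma dim f} one may pick $\overline{X} \in \mathfrak{iso}(\f)$ with $\Zero(\overline{X}) = \Sigma^{\dim\f} = \emptyset$, whereupon $\chi_B(\f) = \chi_B(\f|_{\emptyset}) = 0$. There is essentially no obstacle here—the entire content of the corollary has been absorbed into Theorem \ref{theorem: basic euler char localizes to closed leaves} and Proposition \ref{prop: X killing com Zero=Sigma dim f}. The only point that might deserve an explicit remark is that the stratum $\Sigma^{\dim\f}$ really does coincide with the union of the closed leaves, which is immediate from its definition since a leaf $L$ is closed in $M$ precisely when $L = \overline{L}$, equivalently $\dim(L) = \dim(\overline{L})$.
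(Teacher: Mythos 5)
Your proof is correct and is essentially the paper's own argument: the corollary is stated as an immediate consequence of Theorem \ref{theorem: basic euler char localizes to closed leaves}, since the absence of closed leaves means $\Sigma^{\dim\f}=\emptyset$ and hence $\chi(\Sigma^{\dim\f}/\f)=0$. Your alternative route through Proposition \ref{prop: X killing com Zero=Sigma dim f} and Theorem \ref{theo: localization basic euler char} is the same mechanism one level down, so there is nothing to add.
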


The theorem by P.~Conner \cite{conner} mentioned in the beginning of this chapter is originally stated for the set of fixed points of a torus action instead of $\Zero(X)$ (the statement for $\Zero(X)$ can be recovered by considering the action of the closure in $\mathrm{Iso}(M)$ of the subgroup generated by the flow of $X$). In analogy, if $\f$ is a Killing foliation, the stratum of closed leaves $\Sigma^{\dim\f}$ corresponds to the fixed point set of the transverse action of the Abelian algebra $\mathfrak{g}_\f$. The following result can be seen, thus, as a version of Conner's Theorem for foliations.

\begin{corollary}\label{corollary: Conner via goertsches}
Let $\f$ be a transversely oriented Killing foliation of a compact manifold $M$. Then
\begin{enumerate}[(i)]
\item $\displaystyle \sum_i b^{2i}_B(\f)\geq \sum_i b^{2i}_B(\f|_{\Sigma^{\dim\f}})$,
\item $\displaystyle \sum_i b^{2i+1}_B(\f)\geq \sum_i b^{2i+1}_B(\f|_{\Sigma^{\dim\f}})$.
\end{enumerate}
\end{corollary}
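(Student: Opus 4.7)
The plan is to reduce everything to a parity-counting argument once two ingredients are in place: the Euler-characteristic identity of Theorem \ref{theorem: basic euler char localizes to closed leaves}, and a total-Betti-number inequality
$$\sum_{i} b^{i}_{B}(\f)\;\geq\;\sum_{i} b^{i}_{B}(\f|_{\Sigma^{\dim\f}}).\qquad(\ast)$$
Granted $(\ast)$ together with $\chi_{B}(\f)=\chi_{B}(\f|_{\Sigma^{\dim\f}})$, adding the two relations yields (i) and subtracting yields (ii); hence the entire content of the corollary lies in establishing $(\ast)$.

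To prove $(\ast)$ I would work in the equivariant basic cohomology $H^{*}_{\mathfrak{g}_{\f},B}(\f)$ attached to the effective isometric transverse action of the Abelian algebra $\mathfrak{g}_{\f}$ on $(M,\f)$ described at the end of Section \ref{subsection: Molino Sheaf and Killing Foliations}, whose fixed-point foliation is $\f^{\mathfrak{g}_{\f}}=\f|_{\Sigma^{\dim\f}}$ (by the discussion following Theorem \ref{theorem: basic euler char localizes to closed leaves}). In the framework of Goertsches and Töben, this is a finitely generated graded module over $S^{*}(\mathfrak{g}_{\f}^{*})$, computed by a Cartan-type spectral sequence with
$$E_{2}^{p,q}\cong S^{p}(\mathfrak{g}_{\f}^{*})\otimes H^{q}_{B}(\f),$$
yielding the upper bound $\operatorname{rk}_{S^{*}(\mathfrak{g}_{\f}^{*})}H^{*}_{\mathfrak{g}_{\f},B}(\f)\leq\dim H^{*}_{B}(\f)$, since the generic rank of the abutment is at most that of the $E_{2}$-page.

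The second input is the Borel-type localization theorem for transverse actions on Killing foliations --- whence the corollary's name --- which states that the restriction
$$H^{*}_{\mathfrak{g}_{\f},B}(\f)\longrightarrow H^{*}_{\mathfrak{g}_{\f},B}(\f|_{\Sigma^{\dim\f}})\cong S^{*}(\mathfrak{g}_{\f}^{*})\otimes H^{*}_{B}(\f|_{\Sigma^{\dim\f}})$$
becomes an isomorphism after inverting the nonzero homogeneous elements of $S^{*}(\mathfrak{g}_{\f}^{*})$ (the action on the fixed locus being trivial makes the target free of rank $\dim H^{*}_{B}(\f|_{\Sigma^{\dim\f}})$). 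Comparing generic ranks then gives $\operatorname{rk}_{S^{*}(\mathfrak{g}_{\f}^{*})}H^{*}_{\mathfrak{g}_{\f},B}(\f)=\dim H^{*}_{B}(\f|_{\Sigma^{\dim\f}})$, which combined with the previous paragraph proves $(\ast)$.

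The main obstacle is precisely this Borel-type localization for equivariant basic cohomology, which is the nontrivial piece of machinery but is available in the literature on transverse actions and can be invoked as a black box. I also considered trying to deduce $(\ast)$ from Theorem \ref{theorem: Haefliger deformation} by passing to the $\mathbb{T}^{d}$-action on the orbifold $M/\!/\g$ and appealing to the classical Conner theorem, but this route seems to fail because the deformation is only guaranteed to preserve $\chi_{B}$, not the individual basic Betti numbers.
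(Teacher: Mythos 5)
Your proof is correct and takes essentially the same route as the paper: the paper obtains the total-Betti-number inequality $(\ast)$ by citing \cite[Theorem 5.5]{goertsches} (whose proof is precisely the equivariant-basic-cohomology/Borel-localization argument you sketch) and then adds it to, and subtracts from, the Euler-characteristic identity of Theorem \ref{theorem: basic euler char localizes to closed leaves}. Your closing remark is also on point --- the deformation route is indeed unavailable here because only $\chi_B$, not the individual basic Betti numbers, is preserved.
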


\begin{proof}
It is shown in \cite[Theorem 5.5]{goertsches} that
\begin{equation}\label{eq: toben conner}\sum_i b^{i}_B(\f)\geq\sum_i b^i_B(\f|_{\Sigma^{\dim\f}}).\end{equation}
On the other hand, $\chi_B(\f)=\chi(\Sigma^{\dim\f}/\f)$ gives
\begin{equation}\label{eq: corolary expanded} \sum_i (-1)^ib^{i}_B(\f)=\sum_i (-1)^ib^{i}_B(\f|_{\Sigma^{\dim\f}}).\end{equation}
By adding \eqref{eq: toben conner} and \eqref{eq: corolary expanded} we get the first item and by subtracting \eqref{eq: corolary expanded} from \eqref{eq: toben conner} we get the second one.
\end{proof}

\subsection{The basic Euler characteristic under deformations}

We will show now that the basic Euler characteristic is preserved by the deformation given in Theorem \ref{theorem: Haefliger deformation}. This is somewhat surprising, because the basic Betti numbers, in general, are not preserved.

\begin{theorem}\label{theorem: basic euler char is preserved by deformations}
Let $\f$ be a Killing foliation of a compact manifold $M$ and let $\g$ be obtained from $\f$ by a deformation as in Theorem \ref{theorem: Haefliger deformation}. Then
$$\chi_B(\f)=\chi_B(\g).$$
\end{theorem}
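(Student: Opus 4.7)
The strategy is to apply the localization result Theorem \ref{theo: localization basic euler char} on both sides of the desired equality $\chi_B(\f)=\chi_B(\g)$, arranging that each side reduces to the basic Euler characteristic of a common closed foliation supported on the stratum $\Sigma^{\dim\f}$ of closed leaves of $\f$. On the $\f$-side this is essentially free: Theorem \ref{theorem: basic euler char localizes to closed leaves} already gives $\chi_B(\f)=\chi(\Sigma^{\dim\f}/\f)$, and since $\f|_{\Sigma^{\dim\f}}$ is a closed foliation, Proposition \ref{prop: basic cohomology of closed foliations} combined with Theorem \ref{theorem: Satake} rewrites this as $\chi_B(\f)=\chi_B(\f|_{\Sigma^{\dim\f}})$.

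For the $\g$-side, the goal is to produce a transverse Killing vector field $\overline{Y}\in\mathfrak{iso}(\g)$ with $\Zero(\overline{Y})=\Sigma^{\dim\f}$. Since $\g$ is closed, transverse vector fields for $\g$ correspond to vector fields on the orbifold $M/\!/\g$, and $\mathfrak{iso}(\g)$ corresponds to Killing vector fields with respect to the quotient metric $\pi^\g_{*}\circ\iota(\mathrm{g}^T)$, where $\pi^\g:M\to M/\!/\g$ denotes the canonical projection. By Theorem \ref{theorem: Haefliger deformation}(iii), the $\mathbb{T}^d$-action on $M/\!/\g$ is isometric for this metric, so for a generic $V\in\mathrm{Lie}(\mathbb{T}^d)$ whose one-parameter subgroup is dense in $\mathbb{T}^d$ the induced Killing vector field $\widetilde{Y}$ on $M/\!/\g$ satisfies $\Zero(\widetilde{Y})=|M/\g|^{\mathbb{T}^d}$, and the associated $\overline{Y}\in\mathfrak{iso}(\g)$ then satisfies $\Zero(\overline{Y})=(\pi^\g)^{-1}\bigl(|M/\g|^{\mathbb{T}^d}\bigr)$.

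The crucial identification $(\pi^\g)^{-1}\bigl(|M/\g|^{\mathbb{T}^d}\bigr)=\Sigma^{\dim\f}$ comes from the isomorphism $M/\overline{\f}\cong(M/\g)/\mathbb{T}^d$: a class $[L_x^\g]$ is $\mathbb{T}^d$-fixed precisely when the $\overline{\f}$-leaf closure through $x$ projects to a single $\g$-leaf, and since the homotopic deformation $\f_t$ preserves leaf dimension (so that $\dim\g=\dim\f$ and $\g$-leaves are contained in $\overline{\f}$-leaf closures), this forces $L_x^\f=\overline{L_x^\f}$, i.e.\ $x\in\Sigma^{\dim\f}$. The same dimension count shows that on $\Sigma^{\dim\f}$ the containment $L_x^\g\subset \overline{L_x^\f}=L_x^\f$ of connected submanifolds of equal dimension is an equality, hence $\g|_{\Sigma^{\dim\f}}=\f|_{\Sigma^{\dim\f}}$. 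Applying Theorem \ref{theo: localization basic euler char} to $(M,\g,\overline{Y})$ then gives $\chi_B(\g)=\chi_B(\g|_{\Sigma^{\dim\f}})=\chi_B(\f|_{\Sigma^{\dim\f}})=\chi_B(\f)$, as required.

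The step that I expect will require the most attention is the lifting of the infinitesimal $\mathbb{T}^d$-action on $M/\!/\g$ to a genuine element of $\mathfrak{iso}(\g)$ on $M$ with the expected zero set. This is the vector-field analogue of the tensor-field correspondence $\pi^\g_{*}\circ\iota$ in Theorem \ref{theorem: Haefliger deformation}; although it is natural from the Lie groupoid viewpoint on orbifolds, it must be made precise and its compatibility with the operator $\Zero(\,\cdot\,)$ has to be verified in order to legitimately feed $\overline{Y}$ into Theorem \ref{theo: localization basic euler char}.
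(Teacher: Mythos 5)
Your argument is correct, and it reaches the conclusion by a genuinely different mechanism on the $\g$-side than the paper does. Both proofs hinge on the same structural fact from Theorem \ref{theorem: Haefliger deformation}(iii), namely that $M/\overline{\f}\cong(M/\g)/\mathbb{T}^d$ and hence that the $\mathbb{T}^d$-fixed points in $M/\g$ correspond exactly to the closed leaves of $\f$. But where you stay on $M$ and localize $\chi_B(\g)$ by feeding a transverse Killing vector field $\overline{Y}\in\mathfrak{iso}(\g)$ (induced by a generic element of $\mathrm{Lie}(\mathbb{T}^d)$) into Theorem \ref{theo: localization basic euler char}, the paper descends to the quotient: it writes $\chi_B(\g)=\chi(M/\g)$ via Proposition \ref{prop: basic cohomology of closed foliations} and Satake's theorem, and then invokes the classical localization $\chi(M/\g)=\chi\bigl((M/\g)^{\mathbb{T}^d}\bigr)$ for continuous torus actions (Bredon), followed immediately by $\chi\bigl((M/\g)^{\mathbb{T}^d}\bigr)=\chi(\Sigma^{\dim\f}/\f)=\chi_B(\f)$ from Theorem \ref{theorem: basic euler char localizes to closed leaves}. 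The paper's route is shorter because it sidesteps the two points you correctly flag as needing care: the lift of the infinitesimal torus action to an element of $\mathfrak{iso}(\g)$ with $\Zero(\overline{Y})=\Sigma^{\dim\f}$, and the verification that $\g|_{\Sigma^{\dim\f}}=\f|_{\Sigma^{\dim\f}}$ (which does hold, since the deformation occurs within leaf closures and the leaves of $\f$ and $\g$ have equal dimension). What your route buys is uniformity: the same basic Hopf-index localization theorem is applied on both sides of the equality, and no appeal to Čech-cohomological localization on the (possibly badly singular) quotient space is needed; the identification $\mathfrak{iso}(M/\!/\g)\cong\mathfrak{iso}(\g)$ you rely on is the same one the paper uses elsewhere (in Corollary \ref{corollary: Putmann for orbifolds}), so it is available.
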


\begin{proof}
From Theorem \ref{theorem: Haefliger deformation} we have that $M/\overline{\f}=(M/\g)/\mathbb{T}^d$. In particular, the points in $(M/\g)^{\mathbb{T}^d}$ correspond to the closed leaves of $\f$, that is $(M/\g)^{\mathbb{T}^d}=\Sigma^{\dim\f}/\f$. Hence
$$\chi_B(\g)=\chi(M/\g)=\chi\left((M/\g)^{\mathbb{T}^d}\right)=\chi(\Sigma^{\dim\f}/\f) = \chi_B(\f),$$
where the second equality follows from the theory of continuous torus actions \cite[Theorem 10.9]{bredon} and the last one from Theorem \ref{theo: localization basic euler char}.
\end{proof}

\section{Transverse Hopf conjecture}

A well-know conjecture by H.~Hopf states that any even-dimensional, compact Riemannian manifold $M$ with positive sectional curvature must have positive Euler characteristic. In dimension $2$ the conjecture holds, for if $\ric_M$ is bounded below by a positive constant we have $|\pi(M)|<\infty$, hence $H_1(M,\mathbb{R})=0$ by the Hurewicz Theorem. In dimension $4$, passing to the orientable double cover and using Poincaré duality we see that the conjecture also holds. For dimensions larger than $4$ the full conjecture is still an open problem, but it holds, however, when the symmetry rank of $M$ is sufficiently large. For example, if $\dim(M)=6$ and $M$ admits a non-trivial Killing vector field $X$, then, by the lower dimensional cases, $0<\chi(\Zero(X))=\chi(M)$. For dimensions larger than $6$, there is the following result by T.~Püttmann and C.~Searle \cite[Theorem 2]{puttmann}.

\begin{theorem}[Püttmann--Searle]\label{theorem: Puttmann-Searle classic} Let $M$ be an $n$-dimensional complete Riemannian manifold such that $\sec_M\geq c>0$. If $n$ is even and $\symrank(M)\geq n/4-1$, then $\chi(M)>0$.\end{theorem}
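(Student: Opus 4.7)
The proof goes by induction on the even dimension $n$. The base cases $n\in\{2,4,6\}$ are handled by the observations already sketched in the paragraph preceding the statement: $n=2$ is immediate, $n=4$ combines Bonnet--Myers (which forces $|\pi_1(M)|<\infty$ and hence $b_1(M)=0$) with Poincaré duality to give $\chi(M)=2+b_2(M)>0$, and $n=6$ exploits the hypothesis $\symrank(M)\ge 1$ to extract a nontrivial Killing field $X$, then uses the Hopf--Poincaré formula $\chi(M)=\chi(\Zero(X))$ together with the fact that every component of $\Zero(X)$ is a closed, oriented, totally geodesic submanifold of even codimension $\ge 2$, hence of dimension $\le 4$ and of positive Euler characteristic by the earlier cases. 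That $\Zero(X)$ is nonempty follows from Berger's classical theorem for even-dimensional compact manifolds.

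For the inductive step with $n\ge 8$, fix a maximal torus $T^k\subseteq\mathrm{Iso}(M)$ of rank $k\ge n/4-1$ and a generic Killing field $X\in\mathrm{Lie}(T^k)$. The Hopf--Poincaré formula gives
$$\chi(M)=\chi(\Zero(X))=\sum_i\chi(N_i),$$
where the $N_i$ are the connected components of $\Zero(X)$. By the classical analog of Proposition \ref{proposition: propriedade zero killing}, each $N_i$ is closed, oriented, and totally geodesic of even codimension, and the Gauss equation preserves $\sec\ge c>0$. It therefore suffices to show $\chi(N_i)>0$ for every $i$, either by appealing to the base cases when $\dim N_i\le 6$ or to the inductive hypothesis when $\dim N_i\ge 8$. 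In the latter case one must establish $\symrank(N_i)\ge\dim N_i/4-1$. The residual torus $T^k/\mathbb{S}^1$ acts on $N_i$ with kernel some subtorus $T^c\subseteq T^k$ fixing $N_i$ pointwise; the isotropy representation of $T^c$ on $\nu_pN_i$ at a point $p\in N_i$ can have no zero weights, since a circle in $T^c$ with a zero weight would fix a full neighborhood of $p$ in $M$ by the slice theorem, contradicting effectivity of the $T^k$-action. This forces $2c\le\codim(N_i)$ and yields the inheritance bound $\symrank(N_i)\ge k-\tfrac{1}{2}\codim(N_i)$.

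The main obstacle is that this naive bound is tight and must be refined: for borderline choices of $n$, $k$, and $\codim(N_i)$, the quantity $k-\codim(N_i)/2$ can fall a half-unit short of $\dim N_i/4-1$, so $X$ cannot be arbitrary. The natural remedy is to pick the pair $(\mathbb{S}^1,N)$ with $\mathbb{S}^1\subseteq T^k$ maximizing $\dim N$ among fixed components of all circle subgroups of $T^k$; the maximality forces the weights of $T^c$ on $\nu_pN$ to be in sufficiently generic position to sharpen the weight-counting bound above. Combined with the integrality of $\symrank(N)$ and the parity of $\codim(N)$, one verifies the inequality $k-\tfrac{1}{2}\codim(N)\ge\dim N/4-1$ case by case, closing the induction. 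Tracking the specific constant $n/4-1$ through these combinatorial checks, and handling the possible failure of effectivity of the $T^k/\mathbb{S}^1$-action by iterating the fixed-point reduction through a nested chain of subtori when necessary, is the delicate heart of the Püttmann--Searle argument.
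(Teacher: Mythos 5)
The paper does not actually prove this statement --- it quotes it from P\"uttmann--Searle --- but it does give a full proof of the closed-foliation analogue (Lemma \ref{lemma: putmann searle for closed foliations}), which is the classical argument transplanted, so that is the right benchmark. Your overall architecture (induction on the even dimension, base cases $n\le 6$, localization of $\chi$ to the zero set of a Killing field, positively curved totally geodesic components of even codimension) matches it. The gap is in the inductive step. Your rank-inheritance bound $\symrank(N)\ge k-\tfrac{1}{2}\codim(N)$ is both weaker than what is true and applied to the wrong submanifolds. The correct tool is the ``drop-by-one'' lemma (the classical counterpart of Proposition \ref{proposition: propriedades de Zero killing}(ii)): if $N'$ is a \emph{maximal} element of $\mathcal{Z}(\mathfrak{a})$ for a maximal abelian $\mathfrak{a}$, then $\dim(\mathfrak{a}|_{N'})=\dim(\mathfrak{a})-1$, independently of $\codim(N')$. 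To exploit it one strengthens the inductive statement to ``$\chi(N)>0$ for every $N\in\mathcal{Z}(\mathfrak{a})$'' and routes each such $N$ through a maximal $N'\supset N$; when $\codim(N')\ge 4$ the inequality $k-1\ge (n-4)/4-1\ge\dim(N')/4-1$ closes the induction immediately, with no combinatorics.

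More seriously, your proposed repair of the borderline case --- choosing the pair $(\mathbb{S}^1,N)$ with $\dim N$ maximal and claiming the maximality puts the weights ``in sufficiently generic position to sharpen the weight-counting bound'' --- does not work, and it is not how the argument actually goes. The genuinely borderline situation is $\codim(N')=2$, where even the drop-by-one lemma leaves you half a unit short, and no refinement of the weight count recovers it. What closes this case is structural, not combinatorial: a positively curved closed manifold carrying a Killing field whose zero set has a codimension-two component is fixed-point homogeneous, hence by Grove--Searle is diffeomorphic to a sphere, a real or complex projective space, or a lens space (Theorem \ref{theorem: quotient when there is a codimension 2 zero set} is the leaf-space version of this step); consequently all odd Betti numbers of $M$ vanish, and Conner's inequalities (Proposition \ref{proposition: Conner for closed foliations} in the foliated setting) push that vanishing down to every component $N$ of the zero set, giving $\chi(N)>0$ directly rather than by induction. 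Without this classification input the induction cannot be closed, so as written your proposal has a genuine gap precisely at the step you yourself identify as ``the delicate heart'' of the proof.
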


Our goal in this section is to obtain a transverse version of this result for Killing foliations. Let us begin by trying to directly generalize the low codimensional cases from the manifold counterparts that we saw above. Suppose that $\f$ is a $2$-codimensional complete Riemannian foliation on $M$ and that $\ric_\f\geq c>0$. By Theorem \ref{theorem: hebda} if follows that $H_B^1(\f)=0$. Furthermore, since $\Omega_B^0(\f)$ consists of the basic functions, which are precisely the functions $f:M\to\mathbb{R}$ that are constant on the closures of the leaves, we have $H_B^0(\f)= H_{\mathrm{dR}}^0(M)$, thus
$$\chi_B(\f)=b_B^0(\f)+b_B^2(\f)=\dim H_{\mathrm{dR}}^0(M)+b_B^2(\f)\geq 1,$$
which establishes the following.

\begin{proposition}\label{proposition: chi positive for codim 2}
Let $\f$ be a $2$-codimensional complete Riemannian foliation satisfying $\ric_\f\geq c>0$. Then $\chi_B(\f)>0$.
\end{proposition}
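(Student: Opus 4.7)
The plan is to exploit the fact that in codimension two only three basic Betti numbers can be nonzero, so the alternating sum reduces to $b_B^0(\f)-b_B^1(\f)+b_B^2(\f)$, and to argue that the first two are already forced by the curvature hypothesis.

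First I would invoke Hebda's theorem (Theorem~\ref{theorem: hebda}): since $\ric_\f\geq c>0$ on a complete Riemannian foliation, the leaf space $M/\f$ is compact (in particular $M/\overline{\f}$ is compact, so $\chi_B(\f)$ is well-defined) and $H_B^1(\f)=0$. This disposes of the degree-one term.

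Next I would compute $H_B^0(\f)$ directly. The space $\Omega_B^0(\f)$ consists of smooth functions on $M$ annihilated by $\mathfrak{X}(\f)$; such functions are constant on every leaf, hence by continuity on every leaf closure. Because $M$ is connected, any basic function attains a limit along some leaf closure and must in fact be globally constant (one way to see this cleanly is to observe that $\Omega_B^0(\f)=\Omega_B^0(\overline{\f})$, so the claim reduces to the quotient $M/\overline{\f}$ being connected, which it is as a continuous image of $M$). Thus $b_B^0(\f)=1$.

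Finally, since the codimension of $\f$ is two, $\Omega_B^i(\f)=0$ for $i>2$ (basic forms descend to forms on the $2$-dimensional transverse model), so $b_B^i(\f)=0$ for $i\geq 3$. Putting this together,
\[
\chi_B(\f)=b_B^0(\f)-b_B^1(\f)+b_B^2(\f)=1+b_B^2(\f)\geq 1,
\]
since Betti numbers are nonnegative, which gives the desired strict positivity. The only subtle point is the identification $H_B^0(\f)\cong\mathbb{R}$ in the noncompact setting; apart from this, the argument is entirely mechanical once Hebda's theorem is in hand.
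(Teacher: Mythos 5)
Your proof follows essentially the same route as the paper: Hebda's theorem kills $H_B^1(\f)$ and gives compactness of the leaf closure space (hence finite-dimensionality of the basic cohomology), $b_B^0(\f)=1$, and the remaining term $b_B^2(\f)$ is nonnegative, so $\chi_B(\f)\geq 1$. The one flaw is in your justification of $b_B^0(\f)=1$: you assert that every basic function is globally constant, which is false in general and false even under the hypotheses here (for the Hopf fibration of $\mathbb{S}^3$, a $2$-codimensional Riemannian foliation with $\ric_\f>0$, the ring $\Omega_B^0(\f)$ is the full ring of pullbacks of smooth functions on $\mathbb{S}^2$). What you actually need is not that $\Omega_B^0(\f)$ reduces to constants, but that $H_B^0(\f)=\ker\bigl(d:\Omega_B^0(\f)\to\Omega_B^1(\f)\bigr)$ consists of the \emph{locally constant} basic functions, which for connected $M$ is just $\mathbb{R}$; this is the paper's observation that $H_B^0(\f)=H^0_{\mathrm{dR}}(M)$. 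With that one-line correction your argument is complete and coincides with the paper's.
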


In order to prove the foliation version of Hopf's conjecture to $4$-codimensional foliations we need Poincaré duality to hold for the basic cohomology complex. Combining Theorem \ref{theorem: hebda} with the results in \cite{lopez} we see that $\ric_\f>0$ is a sufficient condition for this to happen, provided the foliation is also transversely oriented \cite[Corollary 6.2, Theorem 6.4 and Corollary 6.5]{lopez}.

\begin{lemma}[López]\label{lemma: poincare duality for positive ricci}
Let $\f$ be a transversely oriented complete Riemannian foliation of a compact manifold such that $\ric_\f>0$. Then $H^i_B(\f)\cong H^{q-i}_B(\f)$.
\end{lemma}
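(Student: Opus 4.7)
The plan is to deduce Poincaré duality from the classical tautness criterion by showing that the Álvarez López obstruction class vanishes under the transverse Ricci hypothesis. Recall that for a transversely oriented Riemannian foliation of a compact manifold, the mean curvature form $\kappa$ of the leaves (with respect to any bundle-like metric) decomposes, after Álvarez López, into a basic part $\kappa_b$ and an exact part; the class $[\kappa_b]\in H^1_B(\f)$ is independent of the bundle-like metric and is the sole obstruction to the existence of a bundle-like metric for which every leaf is a minimal submanifold. Moreover, when the foliation is \emph{geometrically taut} (i.e.\ $[\kappa_b]=0$), the twisted basic Laplacian coincides with the untwisted one, the basic Hodge star $\star_B\colon \Omega^i_B(\f)\to\Omega^{q-i}_B(\f)$ commutes with the relevant codifferential, and Poincaré duality $H^i_B(\f)\cong H^{q-i}_B(\f)$ follows by the standard Hodge-theoretic argument on basic forms.

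The key observation is that the hypothesis $\ric_\f>0$ trivializes the obstruction. Indeed, since $M$ is compact, $\ric_\f\geq c'>c>0$ for some $c'$, so Theorem \ref{theorem: hebda} (Hebda's transverse Bonnet--Myers) applies and gives $H^1_B(\f)=0$. Consequently $[\kappa_b]=0$, i.e.\ $\f$ is geometrically taut. First I would invoke this to fix a bundle-like metric $\mathrm{g}$ making every leaf minimal; then the basic codifferential $\delta_B$ agrees on $\Omega^*_B(\f)$ with the formal adjoint of $d$ with respect to the transverse Hodge inner product, and $\star_B$ intertwines $d$ with $\pm\delta_B$ in the usual way.

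Once geometric tautness is in place, the Poincaré duality isomorphism $H^i_B(\f)\cong H^{q-i}_B(\f)$ is exactly the content of the cited corollaries of López: the basic Hodge star descends to an isomorphism on basic cohomology because transverse orientability provides a well-defined global basic volume form on $T_\f$ invariant under $\mathscr{H}_\f$, and the vanishing of $[\kappa_b]$ ensures that $\star_B$ interchanges $\ker d$ and $\ker\delta_B$ on each degree. I would end the proof by citing the specific results of López \cite[Corollary 6.2, Theorem 6.4, Corollary 6.5]{lopez} for the last step, since they package precisely this implication.

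The main obstacle, conceptually, is the tautness machinery itself: proving from scratch that $[\kappa_b]=0$ implies Poincaré duality requires the full development of basic Hodge theory and the Rummler--Sullivan characterization of tautness, and the nontrivial input is that transverse Ricci positivity forces $H^1_B(\f)=0$. Since Hebda's theorem is already available to us as Theorem \ref{theorem: hebda}, and since López's results provide the tautness $\Rightarrow$ duality implication off the shelf, no further technical work is needed beyond assembling these two ingredients.
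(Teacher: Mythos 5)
Your proposal follows exactly the route the paper takes: it derives $H^1_B(\f)=0$ from Hebda's transverse Bonnet--Myers theorem (Theorem \ref{theorem: hebda}), concludes that the basic mean curvature class $[\kappa_b]$ vanishes, and then invokes the cited results of \cite{lopez} to pass from tautness to Poincaré duality for basic cohomology. The paper states this only as a one-line combination of Theorem \ref{theorem: hebda} with \cite[Corollary 6.2, Theorem 6.4 and Corollary 6.5]{lopez}, so your write-up is a correct and somewhat more explicit rendering of the same argument.
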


\begin{proposition}\label{proposition: chi positive for codim 4}
Let $\f$ be a $4$-codimensional, transversely oriented Riemannian foliation of a compact manifold $M$ satisfying $\ric_\f>0$. Then $\chi_B(\f)> 0$.
\end{proposition}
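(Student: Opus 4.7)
The plan is to mimic the codimension-$2$ argument that was just presented, but now exploiting Poincaré duality (Lemma~\ref{lemma: poincare duality for positive ricci}) to control the top two basic Betti numbers as well. Since $M$ is compact and $\ric_\f>0$, by continuity we have $\ric_\f\geq c>0$ for some constant $c$, so Theorem~\ref{theorem: hebda} applies and yields $H^1_B(\f)=0$. This is the only genuinely analytic input needed; everything else is bookkeeping on the basic cohomology groups in degrees $0,1,2,3,4$.

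Next I would invoke the hypothesis that $\f$ is transversely oriented, together with $\ric_\f>0$, to apply Lemma~\ref{lemma: poincare duality for positive ricci}, obtaining the isomorphisms $H^i_B(\f)\cong H^{4-i}_B(\f)$ for $i=0,\dots,4$. Combined with the previous step this gives $b^3_B(\f)=b^1_B(\f)=0$ and $b^4_B(\f)=b^0_B(\f)$.

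For the last ingredient I would use the fact, recalled in the paragraph preceding Proposition~\ref{proposition: chi positive for codim 2}, that $H^0_B(\f)\cong H^0_{\mathrm{dR}}(M)$; since $M$ is connected by our standing assumption, this equals $\mathbb{R}$, so $b^0_B(\f)=1$ and hence $b^4_B(\f)=1$ as well. Assembling the alternating sum,
\[
\chi_B(\f)=b^0_B(\f)-b^1_B(\f)+b^2_B(\f)-b^3_B(\f)+b^4_B(\f)=2+b^2_B(\f)\geq 2>0,
\]
which is the desired conclusion.

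There is essentially no obstacle in this proof: it is a direct assembly of Theorem~\ref{theorem: hebda} and Lemma~\ref{lemma: poincare duality for positive ricci}, the two nontrivial results having already been cited in the excerpt. The only subtlety worth flagging is verifying that the hypotheses of Lemma~\ref{lemma: poincare duality for positive ricci} are met, namely that $\f$ is a transversely oriented complete Riemannian foliation on a compact manifold with $\ric_\f>0$; all of these are either explicit in the statement or implied by compactness of $M$.
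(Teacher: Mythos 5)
Your proof is correct and follows the same route as the paper: Hebda's theorem gives $H^1_B(\f)=0$, López's Poincaré duality gives $b^3_B=b^1_B=0$ and $b^4_B=b^0_B=1$, and the alternating sum is then at least $2$. The only difference is that you spell out the computation of $b^0_B$ and $b^4_B$ explicitly, which the paper leaves implicit in its claim that $b_B^0(\f)+b_B^2(\f)+b_B^4(\f)\geq 2$.
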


\begin{proof}
Since $H_B^1(\f)=0$, by Theorem \ref{theorem: hebda}, and $H_B^1(\f)\cong H_B^3(\f)$ by Lemma \ref{lemma: poincare duality for positive ricci}, we have $\chi_B(\f)=b_B^0(\f)+b_B^2(\f)+b_B^4(\f)\geq 2$.
\end{proof}

\subsection{Foliations admitting a Killing vector field with a large zero set}

The Grove--Searle classification of manifolds with maximal symmetry rank and positive sectional curvature is achieved in \cite{grove} by reducing it to the classification of positively curved manifolds admitting a Killing vector field $X$ such that $\Zero(X)$ has a connected component $N$ with $\codim(N)=2$ \cite[Theorem 1.2]{grove}. In this section we will study, analogously, the leaf spaces of closed Riemannian foliations that admit a transverse Killing vector field $\overline{X}$ such that $\codim(N)=2$ for some connected component $N$ of $\Zero(\overline{X})$. We do so because it will be useful later in our pursuit of a transverse version of Theorem \ref{theorem: Puttmann-Searle classic}. We begin with the following transverse version of Frankel's Theorem.

\begin{lemma}\label{lemma: Fraenkel for foliations}
Let $(M,\f)$ be a complete Riemannian foliation with $\sec_\f\geq c>0$ and let $N$ and $N'$ be $\f$-saturated, horizontally totally geodesic, compact submanifolds such that $\codim(\f|_{N})+\codim(\f|_{N'})\geq\codim(\f)$. Then $N\cap N'\neq \emptyset$.
\end{lemma}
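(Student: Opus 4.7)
The strategy is a transverse adaptation of T.~Frankel's classical intersection theorem. I would argue by contradiction: assume $N\cap N'=\emptyset$ and produce a second-variation contradiction along a shortest geodesic from $N$ to $N'$.

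First, since $\f$ is complete, pick a bundle-like metric $\mathrm{g}$ inducing $\mathrm{g}^T$ with $(M,\mathrm{g})$ complete. As $N$ and $N'$ are compact and disjoint, there is a unit-speed minimizing geodesic $\gamma:[0,L]\to M$ with $\gamma(0)\in N$, $\gamma(L)\in N'$ and $L=d(N,N')>0$, meeting $N$ and $N'$ orthogonally. Because $N$ and $N'$ are $\f$-saturated, their tangent spaces at the endpoints contain $T\f$, so $\dot\gamma$ is orthogonal to $T\f$ at $t=0$ and at $t=L$. The bundle-like property then implies that $\dot\gamma(t)\in\nu_{\gamma(t)}\f$ for every $t$, i.e.\ $\gamma$ is horizontal.

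Next I carry out the dimension count on $\nu\f$ along $\gamma$. Set $p=\dim\f$, $q=\codim(\f)$, $a=\codim(\f|_N)$ and $b=\codim(\f|_{N'})$; since $N$ and $N'$ are $\f$-saturated we have $T_{\gamma(0)}N\cap\nu_{\gamma(0)}\f$ of dimension $a$ and $T_{\gamma(L)}N'\cap\nu_{\gamma(L)}\f$ of dimension $b$, and both lie in the hyperplane $\dot\gamma^\perp\cap\nu\f$ of dimension $q-1$. Let $P:\nu_{\gamma(0)}\f\to\nu_{\gamma(L)}\f$ be parallel transport with respect to the canonical basic connection $\nabla^B$; it is an isometry preserving $\dot\gamma^\perp\cap\nu\f$ because, when projected to any local quotient $\overline U$, it becomes Levi-Civita parallel transport along the geodesic $\pi(\gamma)$. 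From
$$\dim\bigl(P(T_{\gamma(0)}N\cap\nu_{\gamma(0)}\f)\cap (T_{\gamma(L)}N'\cap\nu_{\gamma(L)}\f)\bigr)\ge a+b-(q-1)\ge 1,$$
I extract a nonzero $\nabla^B$-parallel horizontal field $V$ along $\gamma$ with $V\perp\dot\gamma$, $V(0)\in T_{\gamma(0)}N$ and $V(L)\in T_{\gamma(L)}N'$.

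Now I construct the variation $\gamma_s(t)=\exp_{\gamma(t)}(sV(t))$. For small $s$, each $\gamma_s(0)$ lies in $N$: the curve $s\mapsto\exp_{\gamma(0)}(sV(0))$ starts horizontal, stays horizontal by the bundle-like condition, and hence stays in $N$ because $N$ is horizontally totally geodesic; the analogous statement holds at $t=L$ for $N'$. Thus $\gamma_s$ is an admissible variation connecting $N$ to $N'$ with variation field $V$. Projecting locally to the quotients $\overline U_i$ along $\gamma$, the geodesic $\gamma$ descends to a broken geodesic whose pieces are minimizing between totally geodesic submanifolds, and $V$ descends to a Levi-Civita parallel field perpendicular to the velocity. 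Applying the standard second-variation formula on each $\overline U_i$ and summing, the boundary terms at $t=0$ and $t=L$ vanish because $N$ and $N'$ project to totally geodesic submanifolds (so their second fundamental forms annihilate $V(0)$ and $V(L)$), and the interior contributes
$$\left.\frac{d^2 L(\gamma_s)}{ds^2}\right|_{s=0}=-\int_0^L \mathrm{g}^T\bigl(R^T(V,\dot\gamma)\dot\gamma,V\bigr)\,dt<0$$
by $\sec_\f>0$. This contradicts the fact that $\gamma$ minimizes length among curves from $N$ to $N'$.

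The main obstacle I expect is the last step: one has to justify carefully that the transverse second-variation formula is applicable and that the boundary terms genuinely vanish in the foliated setting. Equivalently, one must translate the global information (minimality, parallel transport, horizontally totally geodesic endpoints) into the local Riemannian quotients $\overline U_i$ in a consistent way along $\gamma$. Once this is done, the computation is a direct transcription of the classical Frankel argument.
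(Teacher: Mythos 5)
Your proposal is correct and follows essentially the same route as the paper, which omits the proof with the remark that one runs the classical Frankel second-variation argument using $\mathscr{H}_\f$-geodesics on a total transversal: your horizontal minimizing geodesic in $(M,\mathrm{g})$ is precisely the lift of such an $\mathscr{H}_\f$-geodesic, and your parallel field, dimension count and boundary-term analysis are the transverse transcription the paper has in mind. The one point you rightly flag as the remaining obstacle --- that the second variation is computed for the transverse (projected) length, so one must relate it back to the ambient minimality of $\gamma$, e.g.\ by horizontally lifting the shortened projected curves back into the saturated submanifolds $N$ and $N'$ --- is exactly the bookkeeping the paper's one-line reference to the pseudogroup formulation is meant to absorb.
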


The proof is similar to that of the classical case \cite[Theorem 1]{fraenkel} if one works with $\mathscr{H}_\f$-geodesics on a total transversal $T_\f$, so we will omit it.

\begin{theorem}\label{theorem: quotient when there is a codimension 2 zero set}
Let $\f$ be a $q$-codimensional, closed Riemannian foliation of a closed manifold $M$. Suppose that $q$ is even, $\sec_\f c>0$ and $\overline{X}\in\mathfrak{iso}(\f)$ satisfies $\codim(N)=2$ for some connected component $N$ of $\Zero(\overline{X})$. Then $M/\f$ is homeomorphic to the quotient space of either $\mathbb{S}^q$ or $|\mathbb{CP}^{q/2}[\lambda]|$ by the action of a finite group.
\end{theorem}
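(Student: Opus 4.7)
The plan is to reduce the statement to an orbifold version of Grove--Searle's fixed-point theorem applied to the leaf-space orbifold $\mathcal{O} := M/\!/\f$. Since $\f$ is closed and $\sec_\f \geq c > 0$, the leaf space $\mathcal{O}$ is a $q$-dimensional compact Riemannian orbifold of positive sectional curvature. Under the correspondence between $\f$-basic tensor fields and tensor fields on $\mathcal{O}$ (Proposition \ref{prop: basic cohomology of closed foliations}, extended to arbitrary basic tensors as in the discussion following Theorem \ref{theorem: Haefliger deformation}), the transverse Killing field $\overline{X}$ descends via the canonical projection $\pi : M \to \mathcal{O}$ to a Killing vector field $X^{\mathcal{O}}$ on $\mathcal{O}$. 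By Proposition \ref{proposition: propriedade zero killing} the component $N \subset \Zero(\overline{X})$ is $\f$-saturated and horizontally totally geodesic, so $N^{\mathcal{O}} := \pi(N)$ is a closed, totally geodesic suborbifold of $\mathcal{O}$ contained in $\Zero(X^{\mathcal{O}})$, with $\codim(N^{\mathcal{O}}) = \codim(N) = 2$.

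With this reduction, I would close the one-parameter group generated by the flow of $X^{\mathcal{O}}$ inside the compact Lie group $\mathrm{Iso}(\mathcal{O})$, obtaining a torus $T$ that acts isometrically on $\mathcal{O}$ and whose fixed-point set agrees with $\Zero(X^{\mathcal{O}})$. Consequently, $N^{\mathcal{O}}$ is a codimension-$2$ fixed-point component of an isometric action of $T$ (and hence, after choosing a generic circle subgroup $\mathbb{S}^1 < T$, of an isometric $\mathbb{S}^1$-action). Thus the even-dimensional, closed, positively curved Riemannian orbifold $\mathcal{O}$ admits an isometric circle action with a fixed-point component of codimension $2$. Applying the orbifold extension of Grove--Searle's classification in this fixed-point form --- the same technology that underlies the maximal-symmetry-rank statement \cite[Corollary E]{harvey} already used in Section \ref{section: proof of havery-searle} --- one concludes that $|\mathcal{O}| = M/\f$ is homeomorphic to a quotient of either $\mathbb{S}^q$ or $|\mathbb{CP}^{q/2}[\lambda]|$ by the action of a finite group.

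The main obstacle is actually invoking the orbifold Grove--Searle fixed-point theorem in the precise form needed. The Harvey--Kim result cited in \cite{harvey} packages the \textbf{maximal} symmetry rank case for Alexandrov spaces, whereas here one needs its underlying codimension-$2$-fixed-point building block in the orbifold/Alexandrov category, together with the precise description of the quotient as $\mathbb{S}^q/\Lambda$ or $|\mathbb{CP}^{q/2}[\lambda]|/\Lambda$ for a finite group $\Lambda$. A secondary, more routine subtlety is that $\f$ need not be transversely orientable: in that case I would first pass to the transverse orientation double cover, apply the orientable statement there, and descend the conclusion by quotienting by the deck involution, which contributes to the finite group acting on the model space. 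Confirming that the finite groups appearing are precisely those listed in the theorem, and that no Alexandrov-boundary phenomena intervene in $\mathcal{O}$, is where the delicate step of the proof lies.
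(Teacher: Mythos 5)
Your reduction matches the paper's opening moves exactly: descend $\overline{X}$ to a Killing field on the orbifold $\mathcal{O}=M/\!/\f$, close its flow to a torus $T<\mathrm{Iso}(\mathcal{O})$, and replace $T$ by a circle $\mathbb{S}^1<T$ with $\mathcal{O}^{\mathbb{S}^1}=\mathcal{O}^T$ (the paper cites \cite[Lemma 4.2.1]{allday} for this), so that $|\mathcal{O}|$ becomes a positively curved Alexandrov space with a fixed-point homogeneous isometric circle action. But at that point you stop and appeal to an unspecified ``codimension-$2$-fixed-point building block'' of the orbifold Grove--Searle classification, and you yourself flag that you do not know how to invoke it. That is precisely where the substance of the proof lies, and it is missing. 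The tool the paper uses is not Corollary E of \cite{harvey} but their Theorem 6.5 on fixed-point homogeneous actions: there is a unique ``soul'' orbit $\mathbb{S}^1x$ at maximal distance from $\pi(N)$ and an $\mathbb{S}^1$-equivariant homeomorphism $|\mathcal{O}|\cong(\nu_x*\mathbb{S}^1)/\mathbb{S}^1_x$, where $\nu_x\cong\mathbb{S}^{\codim(\mathbb{S}^1x)-1}/\Gamma_x$. Identifying the two model spaces then requires a genuine case analysis on the isotropy $\mathbb{S}^1_x$, carried out via the extension $0\to\Gamma_x\to\widetilde{\mathbb{S}}^1_x\to\mathbb{S}^1_x\to 0$ of \cite{galazgarcia}: when $\mathbb{S}^1_x$ is finite one rewrites the join as $\mathbb{S}^{q-2}*\mathbb{S}^1\cong\mathbb{S}^q$ with $\widetilde{\mathbb{S}}^1_x$ acting by a finite group, giving $\mathbb{S}^q/\Lambda$; when $\mathbb{S}^1_x=\mathbb{S}^1$ one gets $|\mathcal{O}|\cong\mathbb{S}^{q+1}/\widetilde{\mathbb{S}}^1_x$ and must show this circle acts \emph{almost freely} before one can quote the Gromoll--Grove classification of Riemannian $1$-foliations of spheres \cite[Theorem 5.4]{gromoll2} to land on $|\mathbb{CP}^{q/2}[\lambda]|/\Lambda$.

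The almost-freeness step is itself nontrivial and is the reason the paper proves a transverse Frankel theorem (Lemma \ref{lemma: Fraenkel for foliations}) beforehand: if the soul fixed point $x$ were not isolated in $\Zero(\overline{X})$, its component $N_L$ would have $\codim(N_L)\leq q-2$ (even codimension, by Proposition \ref{proposition: propriedade zero killing}), forcing $\codim(\f|_N)+\codim(\f|_{N_L})\geq\codim(\f)$ and hence $N\cap N_L\neq\emptyset$, contradicting that $x$ realizes the maximal distance from $\pi(N)$. None of this intersection-theoretic input appears in your proposal, and without it you cannot rule out a positive-dimensional fixed-point component at the soul, so you cannot identify the quotient in the second case. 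Your secondary worry about transverse orientability is a non-issue here: the theorem makes no orientability hypothesis and the Alexandrov-space argument does not require one.
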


\begin{proof}
Denote $\mathcal{O}:=M/\!/\f$. Then $\overline{X}$ induces $\overline{X}_\mathcal{O}\in\mathfrak{iso}(\mathcal{O})$. Let $T$ be the closure of the subgroup generated by the flow of $\overline{X}_\mathcal{O}$ in $\mathrm{Iso}(\mathcal{O})$. It is clear that $T$ is a torus and that $\overline{N}:=\pi(N)$ is a connected component of the fixed-point set $\mathcal{O}^T$. Choose a closed $1$-parameter subgroup $\mathbb{S}^1<T$ such that $\mathcal{O}^{\mathbb{S}^1}=\mathcal{O}^T$ \cite[Lemma 4.2.1]{allday}. Then $|\mathcal{O}|$, with the distance function induced by $\mathrm{g}^T$, is a positively curved Alexandrov space admitting fixed-point homogeneous action of $\mathbb{S}^1$, in the terminology of \cite[Section 6]{harvey}. By \cite[Theorem 6.5]{harvey}, there is a unique orbit $\mathbb{S}^1x$ at maximal distance from $\overline{N}$, the ``soul'' orbit, and there is an $\mathbb{S}^1$-equivariant homeomorphism
$$|\mathcal{O}|\cong \frac{\nu_x*\mathbb{S}^1}{\mathbb{S}^1_x},$$
where $*$ denotes the join operation, $\nu_x$ is the space of normal directions to $\mathbb{S}^1x$ at $x$ and $\mathbb{S}^1_x$ acts on the left on $\nu_x*\mathbb{S}^1$, the action on $\nu_x$ being the isotropy action and the action on $\mathbb{S}^1$ being the inverse action on the right. Notice that $\nu_x$ can be identified with $\mathbb{S}^{\codim(\mathbb{S}^1x)-1}/\Gamma_x$, where $\mathbb{S}^{\codim(\mathbb{S}^1x)-1}$ is the unit sphere in $T_x(\mathbb{S}^1x)^\perp\subset T_x\mathcal{O}$ and $\Gamma_x$ the local group of $\mathcal{O}$ at $x$. By \cite[Proposition 2.12 and Corollary 2.13]{galazgarcia} we can choose an orbifold chart $(\widetilde{U},\Gamma_x,\phi)$ and an extension
$$0\longrightarrow\Gamma_x\longrightarrow\widetilde{\mathbb{S}}^1_x\stackrel{\rho}{\longrightarrow} \mathbb{S}^1_x\to 0$$
acting on $\widetilde{U}$ with $\widetilde{U}/\widetilde{\mathbb{S}}^1_x=U/\mathbb{S}^1_x$ (let us denote this action by $\mu$). We now consider separately the cases when $\mathbb{S}^1_x$ is a finite cyclic group $\mathbb{Z}_r$ and when $\mathbb{S}^1_x=\mathbb{S}^1$.

Suppose that $\mathbb{S}^1_x\cong \mathbb{Z}_r$. Then $\dim(\mathbb{S}^1x)=1$, hence $\nu_x\cong \mathbb{S}^{q-2}/\Gamma_x$, and $\widetilde{\mathbb{S}}^1_x$ is finite. Recall that there is an isometry $\mathbb{S}^{m}*\mathbb{S}^{n}\cong \mathbb{S}^{m+n+1}$ when we realize $\mathbb{S}^{m}*\mathbb{S}^{n}$ via the map
$$
\begin{array}{rcl}
\mathbb{S}^m\times\mathbb{S}^n\times [0,1]& \longrightarrow &\mathbb{S}^{m+n+1}\subset\mathbb{R}^{m+n+2}\\
(s_1,s_2,t) & \longmapsto & \left(\cos\left(\frac{\pi}{2}t\right)s_1,\sin\left(\frac{\pi}{2}t\right)s_2\right).
\end{array}$$
If we define an isometric action of $\widetilde{\mathbb{S}}^1_x$ on $\mathbb{S}^q\cong\mathbb{S}^{q-2}*\mathbb{S}^1$ via this map by
$$\left(g,\left(\cos\left(\frac{\pi}{2}t\right)s_1,\sin\left(\frac{\pi}{2}t\right)s_2\right)\right)\longmapsto \left(\cos\left(\frac{\pi}{2}t\right)\dif(\mu^g)_{\widetilde{x}}s_1,\sin\left(\frac{\pi}{2}t\right)s_2\rho(g)^{-1}\right),$$
we get
$$|\mathcal{O}|\cong \frac{\mathbb{S}^{q-2}/\Gamma_x*\mathbb{S}^1}{\mathbb{Z}_r}\cong\frac{\mathbb{S}^q}{\widetilde{\mathbb{S}}^1_x},$$
which exhibits $|\mathcal{O}|$ as a finite quotient of a sphere.

Now suppose $\mathbb{S}^1_x\cong \mathbb{S}^1$. Then $\dim(\mathbb{S}^1x)=0$ and $\nu_x\cong \mathbb{S}^{q-1}/\Gamma_x$, so, similarly, we have
$$|\mathcal{O}|\cong \frac{\mathbb{S}^{q-1}/\Gamma_x*\mathbb{S}^1}{\mathbb{S}^1}\cong \frac{\mathbb{S}^{q-1}*\mathbb{S}^1}{\widetilde{\mathbb{S}}^1_x}\cong \frac{\mathbb{S}^{q+1}}{\widetilde{\mathbb{S}}^1_x}.$$
In this case $x$ is a fixed point of the $\mathbb{S}^1$ action and therefore corresponds to a leaf $L$ of $\f$ where $\overline{X}$ vanishes. We claim that $x$ is an isolated fixed point. Indeed, if this was not the case, since $q=\codim(\f)$ is even, the connected component $N_L$ of $\Zero(\overline{X})$ containing $L$ would satisfy $\codim(N_L)\leq q-2$ (see Proposition \ref{proposition: propriedade zero killing}), hence
$$\codim(\f|_N)+\codim(\f|_{N_L})=2q-\codim(N)-\codim(N_L)\geq q,$$
so, by Lemma \ref{lemma: Fraenkel for foliations}, $N_L\cap N\neq\emptyset$, which translates to $x\in \overline{N}$, absurd. It follows, therefore, that $\widetilde{\mathbb{S}}^1_x$ acts almost freely on the first join factor $\mathbb{S}^{q-1}$ that corresponds to the space of normal directions to $\mathbb{S}^1x$ and, hence, that its induced action on $\mathbb{S}^{q+1}$ is also almost free.

Let $\mathcal{E}$ be the Riemannian foliation of $\mathbb{S}^{q+1}$ given by the connected components of the orbits of $\widetilde{\mathbb{S}}^1_x$. Notice that $\widetilde{\mathbb{S}}^1_x$ may be disconnected, but the connected component $(\widetilde{\mathbb{S}}^1_x)_0$ of the identity is a circle whose action defines the same foliation $\mathcal{E}$. Thus, denoting by $\Lambda$ the finite group $\widetilde{\mathbb{S}}^1_x/(\widetilde{\mathbb{S}}^1_x)_0$, it follows that
$$\frac{\mathbb{S}^{q+1}}{\widetilde{\mathbb{S}}^1_x}\cong\frac{\mathbb{S}^{q+1}/(\widetilde{\mathbb{S}}^1_x)_0}{\Lambda}=\frac{\mathbb{S}^{q+1}/\mathcal{E}}{\Lambda},$$
where the action of $\Lambda$ identifies the points in $\mathbb{S}^{q+1}/\mathcal{E}$ corresponding to the same $\widetilde{\mathbb{S}}^1_x$-orbit. In view of the classification of Riemannian $1$-foliations of the sphere \cite[Theorem 5.4]{gromoll2}, and since $\mathcal{E}$ is closed, we obtain $|\mathcal{O}|\cong |\mathbb{CP}^{n/2}[\lambda]|/\Lambda$.
\end{proof}

\subsection{Püttmann--Searle Theorem for orbifolds}

Recall that we obtained a transverse version of Conner's Theorem in Corollary \ref{corollary: Conner via goertsches}. Notice, however, that a complete analog of Conner's theorem should be stated for the zero set of any transverse Killing vector field, in place of $\Sigma^{\dim\f}$. Unfortunately, the result in \cite{goertsches} that we use in the proof of Corollary \ref{corollary: Conner via goertsches} cannot be adapted to show this for a general Killing foliation. The following version for closed foliations, however, will be useful.

\begin{proposition}\label{proposition: Conner for closed foliations}
Let $\f$ be a closed Riemannian foliation of a closed manifold $M$ and let $\overline{X}\in\mathfrak{iso}(\f)$. Then
$$\sum_i b^{2i+k}_B(\f)\geq \sum_i b^{2i+k}_B(\f|_{\Zero(\overline{X})}),$$
for $k=0,1$.
\end{proposition}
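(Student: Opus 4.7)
The plan is to reduce the statement to a Conner-type inequality on the quotient orbifold $\mathcal{O}:=M/\!/\f$. Since $\f$ is closed, Proposition~\ref{prop: basic cohomology of closed foliations} combined with Satake's Theorem~\ref{theorem: Satake} identifies
$$b^i_B(\f)=\dim H^i(|\mathcal{O}|,\mathbb{R}).$$
The subset $\Zero(\overline{X})\subset M$ is $\f$-saturated, and $\f|_{\Zero(\overline{X})}$ is again a closed Riemannian foliation, so the same identification yields $b^i_B(\f|_{\Zero(\overline{X})})=\dim H^i(|\Zero(\overline{X})/\!/\f|,\mathbb{R})$.

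Next, $\overline{X}$ descends to a genuine Killing vector field $\overline{X}_\mathcal{O}$ on $\mathcal{O}$. Since the orbifold isometry group of $\mathcal{O}$ is a (finite-dimensional) Lie group, the closure $T$ of the one-parameter subgroup generated by the flow of $\overline{X}_\mathcal{O}$ is a compact torus, and $\Zero(\overline{X}_\mathcal{O})=\mathcal{O}^T$ as suborbifolds; in particular $|\mathcal{O}^T|$ coincides with $|\Zero(\overline{X})/\!/\f|$. The proposition is therefore equivalent to the orbifold analog of Conner's theorem
$$\sum_i b^{2i+k}(|\mathcal{O}|,\mathbb{R})\geq\sum_i b^{2i+k}(|\mathcal{O}^T|,\mathbb{R}),\qquad k=0,1,$$
for an isometric torus action $T$ on a closed Riemannian orbifold.

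To prove this inequality we pass to the orthonormal frame bundle $\mathrm{Fr}(\mathcal{O})$, a smooth compact manifold on which $\mathrm{O}(q)$ acts freely by isometries, with the natural lift of the $T$-action commuting with the $\mathrm{O}(q)$-action and satisfying $\mathrm{Fr}(\mathcal{O})/\mathrm{O}(q)=|\mathcal{O}|$ and $\mathrm{Fr}(\mathcal{O})^T/\mathrm{O}(q)=|\mathcal{O}^T|$. The classical Conner theorem applied to the torus action on $\mathrm{Fr}(\mathcal{O})$ gives the even/odd inequality there, and averaging over the compact group $\mathrm{O}(q)$ identifies the $\mathrm{O}(q)$-invariant subspaces of $H^*(\mathrm{Fr}(\mathcal{O}))$ and $H^*(\mathrm{Fr}(\mathcal{O})^T)$ with $H^*(|\mathcal{O}|,\mathbb{R})$ and $H^*(|\mathcal{O}^T|,\mathbb{R})$, respectively.

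The main obstacle is verifying that the Conner inequality on $\mathrm{Fr}(\mathcal{O})$ descends to $\mathrm{O}(q)$-invariants with the correct parity. This can be handled by noting that the inequality arises from Morse--Bott theory applied to the squared moment-type function of a generic circle $\mathbb{S}^1<T$ (so that $\mathrm{Fr}(\mathcal{O})^{\mathbb{S}^1}=\mathrm{Fr}(\mathcal{O})^T$); since the $T$- and $\mathrm{O}(q)$-actions commute, this function, together with a compatible Riemannian metric on $\mathrm{Fr}(\mathcal{O})$, can be chosen $\mathrm{O}(q)$-invariant, so the Morse--Bott inequalities restrict to the $\mathrm{O}(q)$-invariant subcomplex and yield the claimed estimate on $|\mathcal{O}|$. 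Alternatively, one may run the Borel-localization argument directly on $\mathcal{O}$, using the equivariant de~Rham complex of basic differential forms on $\mathcal{O}$, bypassing $\mathrm{Fr}(\mathcal{O})$ entirely.
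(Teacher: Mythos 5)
Your reduction to the quotient is the same as the paper's: identify $b^i_B(\f)$ with $\dim H^i(|\mathcal{O}|,\mathbb{R})$ for $\mathcal{O}=M/\!/\f$ via Proposition \ref{prop: basic cohomology of closed foliations} and Theorem \ref{theorem: Satake}, pass to the torus $T$ obtained as the closure of the flow of $\overline{X}_\mathcal{O}$, and note $\Zero(\overline{X}_\mathcal{O})=|\mathcal{O}|^T$. From that point on, however, the paper does something much simpler than what you propose: since $|\mathcal{O}|$ is paracompact and locally contractible, singular and \v{C}ech cohomology agree, and the inequality is then exactly the statement of Conner's theorem for \emph{continuous} torus actions on such spaces (\cite[Theorem 10.9]{bredon}), applied directly to the topological action of $T$ on $|\mathcal{O}|$. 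No smooth structure, frame bundle, or Morse theory is needed.

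Your proposed proof of the orbifold Conner inequality via $\mathrm{Fr}(\mathcal{O})$ has two genuine gaps. First, the lifted $T$-action on $\mathrm{Fr}(\mathcal{O})$ does \emph{not} satisfy $\mathrm{Fr}(\mathcal{O})^T/\mathrm{O}(q)=|\mathcal{O}^T|$: a frame at a $T$-fixed point $x$ is fixed only if the isotropy representation of $T$ on $T_x\mathcal{O}$ is trivial, and for a nontrivial Killing vector field vanishing at $x$ the endomorphism $\nabla\overline{X}_\mathcal{O}$ is a nonzero skew-symmetric map there, so the isotropy representation is nontrivial and $\mathrm{Fr}(\mathcal{O})^T$ is in fact empty over $\Zero(\overline{X}_\mathcal{O})$ (this absence of fixed frames is precisely the desingularization phenomenon underlying Molino theory). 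Second, the descent step is wrong: the $\mathrm{O}(q)$-action on $\mathrm{Fr}(\mathcal{O})$ is only almost free (its isotropy groups are the local groups of $\mathcal{O}$), and for a positive-dimensional compact group one has $H^*(|\mathcal{O}|,\mathbb{R})\cong H^*_{\mathrm{O}(q)}(\mathrm{Fr}(\mathcal{O}),\mathbb{R})$, which is \emph{not} the invariant subspace $H^*(\mathrm{Fr}(\mathcal{O}),\mathbb{R})^{\mathrm{O}(q)}$; already for $\mathcal{O}=\mathbb{S}^2$ the frame bundle is $\mathrm{O}(3)$ and the invariant part of $H^3$ does not vanish, while $H^3(\mathbb{S}^2)=0$. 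Your alternative suggestion of running Borel localization on a basic equivariant complex is closer in spirit to a workable argument, but as stated it is only a pointer; the clean fix is simply to invoke the topological version of Conner's theorem on $|\mathcal{O}|$ as the paper does.
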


\begin{proof}
Let us denote $\mathcal{O}:=M/\!/\f$. By Proposition \ref{prop: basic cohomology of closed foliations} and Theorem \ref{theorem: Satake} we have
$$b^i_B(\f)=\dim(H_{\mathrm{dR}}^i(\mathcal{O}))=\dim(H^i(|\mathcal{O}|,\mathbb{R})).$$
Furthermore, since $|\mathcal{O}|$ is paracompact and locally contractible, its singular cohomology coincides with its \v{C}ech cohomology, so we also have
$$b^i_B(\f)=\rank (\check{H}^i(|\mathcal{O}|,\mathbb{R})).$$

Now consider the closure of the subgroup generated by the flow of the induced Killing vector field $\overline{X}_{\mathcal{O}}\in\mathfrak{iso}(\mathcal{O})$. This subgroup is a torus $T<\mathrm{Iso}(\mathcal{O})$ and $\Zero(\overline{X}_{\mathcal{O}})=|\mathcal{O}|^T$, the fixed-point set of its action. From the theory of continuous torus actions \cite[Theorem 10.9]{bredon} it follows that
$$\sum_i \rank (\check{H}^{2i+k}(|\mathcal{O}|,\mathbb{R}))\geq \sum_i \rank (\check{H}^{2i+k}(|\mathcal{O}|^T,\mathbb{R})).\qedhere$$
\end{proof}

The lemma below is a transverse version of Theorem \ref{theorem: Puttmann-Searle classic} for closed foliations, from which the orbifold analogue of the Püttmann--Searle Theorem will be a direct consequence.

\begin{lemma}\label{lemma: putmann searle for closed foliations}
Let $\f$ be a $q$-codimensional, transversely orientable, closed Riemannian foliation of a closed manifold $M$ and let $N\in\mathcal{Z}(\mathfrak{a})$, where $\mathfrak{a}<\mathfrak{iso}(\f)$ is any Abelian Lie subalgebra such that $\dim(\mathfrak{a})=\symrank(\f)$. If $q$ is even, $\sec_\f>0$ and $\symrank(\f)\geq q/4-1$, then $\chi_B(\f|_N)>0$. In particular, $\chi_B(\f)>0$.
\end{lemma}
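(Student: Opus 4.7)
The plan is to prove this by induction on the codimension $q$. The base cases $q = 2$ and $q = 4$ follow from Propositions \ref{proposition: chi positive for codim 2} and \ref{proposition: chi positive for codim 4}: by Proposition \ref{proposition: propriedade zero killing}, any $N \in \mathcal{Z}(\mathfrak{a})$ yields a transversely orientable closed Riemannian foliation $\f|_N$ of the closed submanifold $N$, with $\sec_{\f|_N} > 0$ by horizontal total geodesicity and codimension $q - \codim_M(N) \in \{0, 2, 4\}$. If this codimension is $0$ then $N/\f$ is a point and $\chi_B(\f|_N) = 1$; otherwise the relevant proposition applies to $\f|_N$.

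For the inductive step $q \geq 6$, fix $N \in \mathcal{Z}(\mathfrak{a})$ and split into two cases according to whether $\mathcal{Z}(\mathfrak{a})$ contains an element of codimension $2$ in $M$. Suppose first that every element of $\mathcal{Z}(\mathfrak{a})$ has codimension at least $4$, and choose a maximal $N_0 \in \mathcal{Z}(\mathfrak{a})$ with $N \subseteq N_0$ (existence by codimension descent). By Proposition \ref{proposition: propriedades de Zero killing}(ii), $\mathfrak{a}|_{N_0}$ is an Abelian subalgebra of $\mathfrak{iso}(\f|_{N_0})$ of dimension $\symrank(\f) - 1 \geq q/4 - 2$; extending it to a maximal Abelian $\mathfrak{b} < \mathfrak{iso}(\f|_{N_0})$, we obtain $\dim \mathfrak{b} \geq q/4 - 2 = (q-4)/4 - 1 \geq \codim(\f|_{N_0})/4 - 1$, since $\codim(\f|_{N_0}) \leq q - 4$. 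The inductive hypothesis applies to $(N_0, \f|_{N_0}, \mathfrak{b})$, and since for any $\overline{X} \in \mathfrak{a}$ with $N$ a component of $\Zero(\overline{X})$ the restriction $\overline{X}|_{N_0} \in \mathfrak{b}$ has $N$ as a component of its zero set in $N_0$, we get $N \in \mathcal{Z}(\mathfrak{b})$ and thus $\chi_B(\f|_N) > 0$ (the case $N = N_0$ being covered by the ``in particular'' clause of the inductive hypothesis).

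If instead some $N_\ast \in \mathcal{Z}(\mathfrak{a})$ has codimension $2$, Theorem \ref{theorem: quotient when there is a codimension 2 zero set} identifies $M/\f$ with a finite quotient of either $\mathbb{S}^q$ or $|\mathbb{CP}^{q/2}[\lambda]|$. Since $N$ is horizontally totally geodesic, $N/\f$ is an even-dimensional, closed, positively curved, totally geodesic suborbifold of $M/\!/\f$, which by inspection of totally geodesic sub-orbifolds of these model spaces is itself (a finite quotient of) an even-dimensional sphere or weighted complex projective space. Both have positive Euler characteristic, so $\chi_B(\f|_N) = \chi(N/\f) > 0$ by Proposition \ref{prop: basic cohomology of closed foliations} together with Theorem \ref{theorem: Satake}.

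For the ``in particular'' clause, when $q \in \{2,4\}$ the base propositions directly give $\chi_B(\f) > 0$. For $q \geq 6$, $\symrank(\f) \geq 1$, so $\mathfrak{a}$ contains some nonzero $\overline{X}$; an orbifold version of Berger's theorem applied to the closed, even-dimensional, positively curved orbifold $M/\!/\f$ ensures $\Zero(\overline{X}) \neq \emptyset$, and Theorem \ref{theo: localization basic euler char} yields $\chi_B(\f) = \chi_B(\f|_{\Zero(\overline{X})}) = \sum_N \chi_B(\f|_N) > 0$ by the main claim. The principal obstacle I anticipate is the codimension-$2$ case: explicitly identifying all totally geodesic suborbifolds of finite quotients of spheres and weighted complex projective spaces, and verifying their positive Euler characteristics particularly in the weighted setting, where the orbit stratification is more delicate.
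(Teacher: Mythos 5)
Your base case and your first inductive case (all elements of $\mathcal{Z}(\mathfrak{a})$ of codimension at least $4$) essentially reproduce the paper's argument, and your explicit treatment of the ``in particular'' clause via an orbifold Berger theorem plus Theorem \ref{theo: localization basic euler char} is a reasonable completion of a step the paper leaves implicit. The problem is your codimension-$2$ case, and it is a genuine gap, not just a technicality. Theorem \ref{theorem: quotient when there is a codimension 2 zero set} only produces a \emph{homeomorphism} of $M/\f$ with $\mathbb{S}^q/\Lambda$ or $|\mathbb{CP}^{q/2}[\lambda]|/\Lambda$; it gives no isometric (or even smooth orbifold) identification. So the phrase ``by inspection of totally geodesic sub-orbifolds of these model spaces'' has no content: the metric on $M/\!/\g$ is arbitrary subject to $\sec>0$, and there is no classification of its horizontally totally geodesic saturated submanifolds to inspect. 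Even granting an isometric identification, classifying totally geodesic suborbifolds of weighted projective spaces and verifying positivity of their Euler characteristics is exactly the hard problem you flag at the end, and it does not get solved.

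The paper's route through this case uses only the topological information and avoids geometry entirely. From the homeomorphism $M/\f\cong \mathbb{S}^q/\Lambda$ or $|\mathbb{CP}^{q/2}[\lambda]|/\Lambda$ and \cite[Theorem 7.2]{bredon}, the \v{C}ech cohomology of $M/\f$ is the $\Lambda$-invariant part of that of the model, so all odd basic Betti numbers $b_B^{2i+1}(\f)$ vanish. Then Proposition \ref{proposition: Conner for closed foliations} (the Conner-type inequality for closed foliations, itself a consequence of the Borel-type inequality for continuous torus actions on $|M/\!/\f|$) gives
$$0=\sum_i b^{2i+1}_B(\f)\geq \sum_i b^{2i+1}_B(\f|_{\Zero(\overline{X})})\geq \sum_i b^{2i+1}_B(\f|_N),$$
so all odd basic Betti numbers of $\f|_N$ vanish and $\chi_B(\f|_N)\geq b^0_B(\f|_N)\geq 1$. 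You should replace your totally-geodesic-suborbifold argument with this parity-transfer argument; everything else in your proof can stand.
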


\begin{proof}
We proceed by induction on $q$. Notice that $\codim(\f|_N)=\codim(\f)-\codim(N)$ is always even and $\sec_{\f|_N}>0$, by Proposition \ref{proposition: propriedade zero killing}. For $q< 6$ the result follows directly from Propositions \ref{proposition: chi positive for codim 2} and \ref{proposition: chi positive for codim 4}.

For the induction step, take a maximal $N'\in\mathcal{Z}(\mathfrak{a})$ such that $N\subset N'$. We now have two cases.

If $\codim(N')\geq 4$, then $\codim(\f|_{N'})\leq\codim(\f)-4$. Therefore, by Proposition \ref{proposition: propriedades de Zero killing},
\begin{eqnarray*}
\symrank(\f|_{N'}) & = & \dim(\mathfrak{a}|_{N'})=\dim(\mathfrak{a})-1=\symrank(\f)-1\\
           & \geq & \frac{\codim(\f)-4}{4}-1\geq\frac{\codim(\f|_{N'})}{4}-1,\end{eqnarray*}
so $(N',\f|_{N'})$ satisfy the induction hypothesis and $\chi_B(\f|_N)>0$, because $N\in\mathcal{Z}(\mathfrak{a}|_{N'})$.

If $\codim(N')= 2$, then using Theorem \ref{theorem: quotient when there is a codimension 2 zero set} we have that
$$M/\f\cong\begin{cases}
|\mathbb{CP}^{\frac{q}{2}}[\lambda]|/\Lambda \text{ or}\\
\mathbb{S}^q/\Lambda,
\end{cases}$$
where $\Lambda$ is a finite group in either case. Now, by \cite[Theorem 7.2]{bredon},
$$\check{H}^*(M/\f,\mathbb{R})\cong\begin{cases}
\check{H}^*(\mathbb{CP}^{\frac{q}{2}}[\lambda],\mathbb{R})^\Lambda \text{ or}\\
\check{H}^*(\mathbb{S}^q,\mathbb{R})^\Lambda.
\end{cases}$$
Since all odd Betti numbers of both $\mathbb{CP}^{q/2}[\lambda]$ and $\mathbb{S}^q$ vanish, we obtain $b_{2i+1}(M/\f)=b^{2i+1}_B(\f)=0$ for all $i>0$. From Proposition \ref{proposition: Conner for closed foliations},
$$0\geq \sum_i b^{2i+1}_B(\f|_{\Zero(\overline{X})})\geq \sum_i b^{2i+1}_B(\f|_N),$$
so $b^{2i+1}_B(\f|_N)$ also vanishes for all $i$. In particular, $\chi_B(\f|_N)>0$.
\end{proof}

\begin{corollary}[Püttmann--Searle Theorem for orbifolds]\label{corollary: Putmann for orbifolds}
Let $\mathcal{O}$ be a compact, orientable $n$-dimensional Riemannian orbifold such that $\sec_{\mathcal{O}}>0$. If $n$ is even and $\symrank(\mathcal{O})\geq n/4-1$, then $\chi(|\mathcal{O}|)>0$.
\end{corollary}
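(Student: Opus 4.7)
The plan is to reduce the orbifold statement to Lemma \ref{lemma: putmann searle for closed foliations} by presenting $\mathcal{O}$ as the leaf space of a closed transversely orientable Riemannian foliation of a closed manifold. To do so, I would pass to the orthonormal frame bundle $P:=\mathcal{O}^{\Yup}$ associated to the orbifold tangent bundle. Although $\mathrm{SO}(n)$ acts on $|\mathcal{O}|$ only through the (possibly nontrivial) local groups, its action on orthonormal frames is free: for each orbifold chart $(\widetilde{U},\Gamma,\phi)$ the group $\Gamma$ acts freely on the ordinary frame bundle of $\widetilde{U}$, so gluing these pieces yields a genuine smooth manifold $P$ together with a principal $\mathrm{SO}(n)$-bundle structure $\pi\colon P\to\mathcal{O}$ (in the groupoid sense). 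Since $\mathcal{O}$ is compact and $\mathrm{SO}(n)$ is compact, $P$ is a closed manifold.

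Let $\f$ be the foliation of $P$ by the fibers of $\pi$. Then $\f$ is a closed foliation of codimension $q=n$ with $P/\!/\f\cong\mathcal{O}$. Pulling back a Riemannian metric on $\mathcal{O}$ that realizes the orbifold metric and combining it with a bi-invariant metric on $\mathrm{SO}(n)$ turns $\pi$ into a Riemannian submersion of orbifolds, so $\f$ is Riemannian and, by O'Neill's equation, $\sec_{\f}\geq \sec_{\mathcal{O}}>0$. Transverse orientability of $\f$ follows from the assumed orientability of $\mathcal{O}$. Moreover, each Killing vector field on $\mathcal{O}$ lifts canonically (via the induced action on frames) to a transverse Killing vector field of $\f$, giving
\[
 \symrank(\f)\geq\symrank(\mathcal{O})\geq \tfrac{n}{4}-1=\tfrac{q}{4}-1.
\]

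With these properties verified, Lemma \ref{lemma: putmann searle for closed foliations} applied to $(P,\f)$ yields $\chi_B(\f)>0$. To conclude, I invoke Proposition \ref{prop: basic cohomology of closed foliations} to obtain an isomorphism $\Omega_B^*(\f)\cong \Omega^*(\mathcal{O})$ of differential complexes, and then Satake's Theorem \ref{theorem: Satake} to identify $H_{\mathrm{dR}}^*(\mathcal{O})$ with $H^*(|\mathcal{O}|,\mathbb{R})$. Putting these together gives
\[
 \chi(|\mathcal{O}|)=\sum_i(-1)^i\dim H_{\mathrm{dR}}^i(\mathcal{O})=\chi_B(\f)>0,
\]
as desired. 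The only delicate point is the construction of the orbifold orthonormal frame bundle as a smooth closed manifold and the verification that transverse data on $\f$ correspond bijectively to (orbifold) data on $\mathcal{O}$; once this dictionary is set up, the argument is a bookkeeping exercise. I expect the main conceptual check to be that the transverse symmetry rank of $\f$ is bounded below by the orbifold symmetry rank of $\mathcal{O}$, which is essentially the statement that Killing vector fields lift uniquely to $\mathrm{SO}(n)$-invariant Killing vector fields on $P$ commuting with the principal action.
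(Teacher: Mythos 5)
Your proposal is correct and follows essentially the same route as the paper: realize $\mathcal{O}$ as the leaf space of a closed, transversely orientable Riemannian foliation of a closed manifold, apply Lemma \ref{lemma: putmann searle for closed foliations}, and conclude via Proposition \ref{prop: basic cohomology of closed foliations} and Theorem \ref{theorem: Satake}; the paper simply outsources the frame-bundle realization to a citation (Alexandrino--Bettiol) rather than constructing it by hand. One cosmetic point: the transverse metric of the fiber foliation \emph{is} the orbifold metric on the local quotients, so $\sec_{\f}=\sec_{\mathcal{O}}$ holds on the nose and no appeal to O'Neill's equation is needed.
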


\begin{proof}
By \cite[Proposition 5.21]{alex}, $\mathcal{O}$ is the leaf space of a closed, transversely orientable, $n$-codimensional Riemannian foliation $\f$ of a closed manifold, satisfying $\sec_\f =\sec_{\mathcal{O}}>0$. Moreover, the identification $\mathfrak{iso}(\mathcal{O})\cong\mathfrak{iso}(\f)$ gives us that $\symrank(\f)\geq n/4-1$, therefore $\f$ satisfies the hypotheses of Lemma \ref{lemma: putmann searle for closed foliations}. From Proposition \ref{prop: basic cohomology of closed foliations} and Theorem \ref{theorem: Satake} it follows that
$$\chi(|\mathcal{O}|)=\sum_i(-1)^i\dim(H_{\mathrm{dR}}^i(\mathcal{O}))=\chi_B(\f)>0.\qedhere$$
\end{proof}

\subsection{Proof of Theorem \ref{theorem: puttmann for foliations}}

Since $M$ is compact we actually have $\sec_\f\geq c>0$ for some constant $c$. Therefore, we can use Theorem \ref{theorem: Haefliger deformation} to deform $\f$ into a closed Riemannian foliation $\g$ such that $\sec_\g>0$ and $\symrank(\g)\geq d=\dim(\overline{\f})-\dim(\f)$. By Lemma \ref{lemma: putmann searle for closed foliations} and Theorem \ref{theorem: basic euler char is preserved by deformations} we have
$$\chi_B(\f)=\chi_B(\g)>0.$$

\section{A topological obstruction}\label{section: topological obstruction}

Let $(M,\f)$ be a Riemannian foliation and denote by $\mathrm{G}_\f$ the \textit{holonomy groupoid} of $\f$ (see \cite[Proposition 5.6]{mrcun} or \cite[Section 1.1]{haefliger3}). The study of classifying spaces of topological groupoids that appears in \cite{haefliger3} shows that there is a locally trivial fibration $E\mathrm{G}_\f\to B\mathrm{G}_\f$, whose fiber is a generic leaf of $\f$ \cite[Corollaire 3.1.5]{haefliger3}, and a commutative diagram
$$\xymatrix{
E\mathrm{G}_\f \ar[dd] \ar[dr]^-{\zeta} & \\
 & M \ar[dl]^-{\Upsilon}\\
B\mathrm{G}_\f &}$$
where $\zeta$ is a homotopy equivalence \cite[Coloraire 3.1.4]{haefliger3}. Analogously, this construction can be applied to the groupoid $\mathrm{G}^T_\f$ of a representative $(T_\f,\mathscr{H}_\f)$ of the holonomy pseudogroup of $\f$ and, since $\mathrm{G}_\f$ and $\mathrm{G}^T_\f$ are equivalent \cite[p. 81]{haefliger3}, it follows from \cite[Corollaire 3.1.3]{haefliger3} that $B\mathrm{G}_\f$ and $B\mathrm{G}^T_\f$ are homotopy equivalent. Moreover, notice that when $\f$ is a closed foliation $B\mathrm{G}^T_\f$ coincides with the classifying space of the orbifold $M/\!/\f$, so the above results show that, at least from the homotopy theoretic point of view, closed foliations behave essentially like fibrations. We can now combine these facts with the invariance of the basic Euler characteristic under deformations (see Theorem \ref{theorem: basic euler char is preserved by deformations}) to obtain the following.

\begin{theorem}\label{theorem: closed leaf + transverse symmetry implies charM vanishes}
Let $\f$ be a Riemannian foliation of a closed, simply-connected manifold $M$. If $\chi(M)\neq 0$ then $\f$ is closed.
\end{theorem}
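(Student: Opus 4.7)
The plan is to combine the invariance of the basic Euler characteristic under the deformation of Theorem~\ref{theorem: Haefliger deformation} with the classifying-space framework recalled just above to express $\chi(M)$ as a product, and then to use the Haefliger--Salem realization to force one factor to vanish whenever $\f$ fails to be closed.

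First, since $M$ is simply-connected, $\f$ is automatically a Killing foliation, so Theorem~\ref{theorem: Haefliger deformation} produces a closed Riemannian foliation $\g$ and Theorem~\ref{theorem: basic euler char is preserved by deformations} gives $\chi_B(\g)=\chi_B(\f)$. Proposition~\ref{prop: basic cohomology of closed foliations} combined with Theorem~\ref{theorem: Satake} identifies $\chi_B(\g)$ with $\chi(|M/\!/\g|)$. Since $\g$ is closed, the fibration $L_\g\to E\mathrm{G}_\g\to B\mathrm{G}_\g$ recalled from \cite{haefliger3}, together with the homotopy equivalences $E\mathrm{G}_\g\simeq M$ and $B\mathrm{G}_\g\simeq B(M/\!/\g)$, exhibits $M$ up to homotopy as the total space of a locally trivial fibration over $B(M/\!/\g)$ with fiber a generic holonomy-free leaf $L_\g$ of $\g$. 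Multiplicativity of the Euler characteristic in this fibration (valid because $B(M/\!/\g)$ is simply-connected, as $\pi_1(M)=0$ surjects onto $\pi_1^{\mathrm{orb}}(M/\!/\g)$) then yields
$$
\chi(M)=\chi(L_\g)\cdot\chi(|M/\!/\g|)=\chi(L_\g)\cdot\chi_B(\f).
$$

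Next I would argue by contrapositive: assuming $\f$ is not closed, I would show $\chi(L_\g)=0$. Using the Haefliger--Salem realization (Theorem~\ref{theorem: Haefliger-Salem}), write $\f=\Upsilon^*(\f_H)$ and $\g=\Upsilon^*(\g_K)$ for the good smooth map $\Upsilon\colon M\to\mathcal{O}$, with $\f_H$ the foliation of $\mathcal{O}$ by orbits of the dense contractible subgroup $H<\mathbb{T}^N$ and $\g_K$ the foliation by orbits of the nearby closed subtorus $K<\mathbb{T}^N$ of the same dimension as $H$. The hypothesis that $\f$ is not closed forces $H$ to be non-trivial (if $H$ were trivial, $\f_H$ would be the foliation of $\mathcal{O}$ by points, and $\f$ would coincide with the closed foliation given by the fibers of $\Upsilon$), so $\dim K=\dim H\geq 1$ and every generic $K$-orbit $K\cdot x\cong\mathbb{T}^{\dim K}$ is a positive-dimensional torus with $\chi(K\cdot x)=0$. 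Transversality of $\Upsilon$ to $\g_K$ makes $\Upsilon|_{L_\g}\colon L_\g\to K\cdot x$ a surjective submersion, which is automatically proper since $L_\g$ is compact, hence a locally trivial fiber bundle by Ehresmann's theorem. Multiplicativity of $\chi$ in this bundle gives $\chi(L_\g)=\chi(\text{fiber})\cdot\chi(K\cdot x)=0$, and the displayed formula above forces $\chi(M)=0$.

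The main obstacle I anticipate lies in this last paragraph: rigorously verifying inside the Haefliger--Salem framework that $\Upsilon|_{L_\g}$ really is a proper surjective submersion onto the torus orbit $K\cdot x$ for a generic leaf $L_\g$, despite the orbifold singularities of $\mathcal{O}$, and justifying the multiplicativity of the Euler characteristic along the resulting Ehresmann bundle. The other ingredients are direct applications of the invariance of $\chi_B$ under deformations and the Haefliger classifying-space results already in hand.
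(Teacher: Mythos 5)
Your first paragraph is essentially the paper's own argument: deform $\f$ to a closed foliation $\g$, use the fibration $E\mathrm{G}_\g\to B\mathrm{G}_\g$ with fiber a generic leaf $\widehat L$ of $\g$, observe that the base is simply-connected, and combine Theorem \ref{theorem: basic euler char is preserved by deformations} with Proposition \ref{prop: basic cohomology of closed foliations} to get $\chi(M)=\chi(\widehat L)\,\chi_B(\f)$. Up to this point you and the paper coincide.

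The second half is where you diverge, and it contains a genuine gap. Transversality of $\Upsilon$ to $\g_K$ gives $T_m L_\g=(d\Upsilon_m)^{-1}(T\g_K)$, but it does \emph{not} give $T\g_K\subset d\Upsilon_m(T_mM)$; hence it does not make $\Upsilon|_{L_\g}$ a submersion onto $K\cdot x$, nor even surjective. This is not a repairable technicality: $\Upsilon$ restricted to a leaf is only the homotopy-theoretic classifying map of its holonomy cover, and the statement you are aiming for ($\chi(L_\g)=0$ whenever $\f$ is non-closed) is false in general --- note that your second paragraph never uses $\pi_1(M)=0$. Concretely, let $S$ be a closed orientable surface of genus two and let $\f$ be the suspension of a homomorphism $\pi_1(S)\to\mathrm{SO}(2)<\mathrm{Iso}(\mathbb{S}^2)$ with dense image; this is a non-closed Killing foliation of an $\mathbb{S}^2$-bundle over $S$, and the approximating closed foliations are suspensions of homomorphisms onto $\mathbb{Z}_m$, whose generic leaves are $m$-fold covers of $S$ with Euler characteristic $-2m\neq 0$. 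Such a leaf admits no fibration over the circle $K\cdot x$, so the mechanism you propose cannot work. The paper's mechanism is different: by Ghys's theorem $\f$ has a closed leaf $L$; local Reeb stability gives $\chi(\widehat L)=h(\g)\chi(L)$, where $h(\g)$ is the order of the holonomy group of $L$ in $\g$; and Lemma \ref{lemma: variando a holonomia} shows that $h(\g_i)\to\infty$ along a sequence of deformations, so the identity $\chi(M)=h(\g_i)\chi(L)\chi_B(\f)$ with constant left-hand side forces $\chi(L)\chi_B(\f)=0$, hence $\chi(M)=0$. That argument is deliberately consistent with $\chi(\widehat L)\neq 0$ (in which case it forces $\chi_B(\f)=0$ instead), which is precisely the case your route cannot reach.
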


\begin{proof} Assume that $\chi(M)\neq 0$ and that $\f$ is not closed. By \cite[Théorème 3.5]{ghys}, there is a closed leaf $L\in\f$. If we choose a closed foliation $\g$ near $\f$ via Theorem \ref{theorem: Haefliger deformation}, then $L$ is also a leaf of $\g$, because the deformation respects $\overline{\f}$. Fix a generic leaf $\widehat{L}\in\g$ near $L$. By the results in \cite{haefliger3} presented above, we have a locally trivial fibration $E\mathrm{G}_\g\to B\mathrm{G}_\g$ with fiber $\widehat{L}$ and a commutative diagram
$$\xymatrix{
 & E\mathrm{G}_\g \ar[dd] \ar[dr]^-{\zeta} & \\
 &  & M \ar[dl]^-{\Upsilon}\\
B\mathcal{O}\ar[r]^-{h} & B\mathrm{G}_\g &}$$
where $\mathcal{O}$ denotes $M/\!/\g$ and both $\zeta$ and $h$ are homotopy equivalences. By the homotopy exact sequence of $E\mathrm{G}_\g\to B\mathrm{G}_\g$ we see that $B\mathrm{G}_\g$ is also simply-connected, hence the Euler characteristic (with real coefficients) has the product property
$$\chi(M)=\chi(E\mathrm{G}_\g)=\chi(\widehat{L})\chi(B\mathrm{G}_\g).$$
Moreover, it is known that $H^*(B\mathcal{O},\mathbb{R})\cong H^*(|\mathcal{O}|,\mathbb{R})$ \cite[Corollary 4.3.8]{boyer}, so
\begin{equation}\chi(M)=\chi(\widehat{L})\chi(|\mathcal{O}|)=\chi(\widehat{L})\chi_B(\g)=\chi(\widehat{L})\chi_B(\f),\label{eq: fibration property basic euler char}\end{equation}
where we use Theorem \ref{theorem: basic euler char is preserved by deformations} and Proposition \ref{prop: basic cohomology of closed foliations}.

Local Reeb stability asserts that the restriction of the orthogonal projection $\mathrm{Tub}_\varepsilon(L)\to L$ to $\widehat{L}$ is a $h(\g)$-sheeted covering map $\widehat{L}\to L$, where $h(\g)=|\mathrm{Hol}(L)|<\infty$ (the holonomy being with respect to $\g$). Hence $\chi(\widehat{L})=h(\g)\chi(L)$ and equation \eqref{eq: fibration property basic euler char} can be rewritten as
\begin{equation}\chi(M)=h(\g)\chi(L)\chi_B(\f).\label{eq: fibration property basic euler char 2}\end{equation}
On the other hand, by Lemma \ref{lemma: variando a holonomia}, for a sequence $\g_i$ of closed foliations approaching $\f$ we must have $h(\g_{i})\to\infty$. In particular, we can change the number $h(\g)$ by varying $\g$. This violates equation \eqref{eq: fibration property basic euler char 2}.
\end{proof}

\subsection{Proof of Theorem \ref{teo: topological obstruction}}

Suppose $M$ is a compact manifold satisfying $|\pi_1(M)|<\infty$ and $\chi(M)\neq0$ and let $\f$ be a Riemannian foliation of $M$. We denote by $\widehat{\f}$ the lift of $\f$ to the universal covering $\widehat{M}$. Then we have $\chi(\widehat{M})=|\pi_1(M)|\chi(M)\neq0$, so $\widehat{\f}$ is a closed foliation, by Theorem \ref{theorem: closed leaf + transverse symmetry implies charM vanishes}. It now follows from Corollary \ref{corollary: algebra estrutural e levantamentos finitos} that $\f$ is also closed.\qed


\begin{thebibliography}{10}

\bibitem{adem} A. Adem, J. Leida, Y. Ruan: \textit{Orbifolds and Stringy Topology}, Tracts in Mathematics \textbf{171}, Cambridge University Press, 2007.

\bibitem{kacimi2} A. Alaoui, V. Sergiescu, G. Hector: \textit{La cohomologie basique d'un feuilletage riemannien est de dimension finie}, Math. Z. \textbf{188} (1985), 593--599.

\bibitem{alex} M. Alexandrino, R. Betiol: \textit{Lie Groups and Geometric Aspects of Isometric Actions}, Springer, 2015.

\bibitem{allday} C. Allday, V. Puppe: \textit{Cohomological Methods in Transformation Groups}, Cambridge Studies in Advanced Mathematics \textbf{32}, Cambridge University Press, 1993.

\bibitem{belfi} V. Belfi, E. Park, K. Richardson: \textit{A Hopf index theorem for foliations}, Differential Geom. Appl. \textbf{18}(3) (2003), 319--341.

\bibitem{borzellino3} J. Borzellino: \textit{Orbifolds of maximal diameter}, Indiana Univ. Math. J. \textbf{42}(1) (1993), 37--53.

\bibitem{borzellino2} J. Borzellino, V. Brunsden: \textit{The stratified structure of spaces of smooth orbifold mappings}, Commun. Contemp. Math. \textbf{15}(5) (2013), 1350018.

\bibitem{boyer} C. Boyer, K. Galicki: \textit{Sasakian Geometry}, Oxford Mathematical Monographs, Oxford University Press, 2008.

\bibitem{bredon} G. Bredon: \textit{Introduction to Compact Transformation Groups}, Pure and Applied Mathematics \textbf{46}, Academic press, 1972.

\bibitem{candel} A. Candel, L. Conlon: \textit{Foliations I}, Graduate Studies in Mathematics \textbf{23}, American Mathematical Society, 2000.

\bibitem{chenruan} W. Chen, Y. Ruan: \textit{Orbifold Gromov-Witten theory}, in: \textit{Orbifolds in Mathematics and Physics}, ed. by A. Adem, J. Morava and Y. Ruan, Contemporary Mathematics \textbf{310}, American Mathematical Society, 2002, 25--85.

\bibitem{conner} P. Conner: \textit{On the action of the circle group}, Michigan Math. J. \textbf{4} (1957), 241--247.

\bibitem{fraenkel} T. Frankel: \textit{Manifolds with Positive Curvature}, Pacific J. Math. \textbf{11} (1961), 165--174.

\bibitem{galazgarcia} F. Galaz-García et al: \textit{Torus orbifolds, slice-maximal torus actions and rational ellipticity}, Int. Math. Res. Not. IMRN (2017), rnx064.

\bibitem{ghys} E. Ghys: \textit{Feuilletages riemanniens sur les varietes simplement connexes}, Ann. Inst. Fourier \textbf{34} (1984), 203--223.

\bibitem{goertsches} O. Goertsches, D. Töben: \textit{Equivariant basic cohomology of riemannian foliations}, J. Reine Angew. Math. (2016).

\bibitem{gorokhovsky} A. Gorokhovsky, J. Lott: \textit{The index of a transverse Dirac-type operator}, J. Reine Angew. Math. \textbf{678} (2013), 125--162.

\bibitem{gromoll2} D. Gromoll, K. Grove: \textit{The low-dimensional metric foliations of euclidean spheres}, J. Differential Geom. \textbf{28} (1988), 143--156.

\bibitem{grove} K. Grove, C. Searle: \textit{Positivelly curved manifolds with maximal symmetry rank}, J. Pure Appl. Algebra \textbf{91} (1994), 137--142.

\bibitem{haefliger3} A. Haefliger: \textit{Groupoïdes d'holonomie et classifiants}, Astérisque \textbf{116} (1986), 321--331.

\bibitem{haefliger5} \bysame: \textit{Leaf closures in riemannian foliations}, in: \textit{A Fête of Topology}, ed. by Y. Matsumoto, T. Mizutani and S. Morita, Academic Press, 1988, 3--32.

\bibitem{haefliger2} A. Haefliger, E. Salem: \textit{Riemannian foliations on simply connected manifolds and actions of tori on orbifolds}, Illinois J. Math. \textbf{34} (1990), 706--730.

\bibitem{harvey} J. Harvey, C. Searle: \textit{Orientation and symmetries of Alexandrov spaces with applications in positive curvature}, J. Geom. Anal. \textbf{27}(2) (2017), 1636--1666.

\bibitem{hebda} J. Hebda: \textit{Curvature and focal points in riemannian foliations}, Indiana Univ. Math. J. \textbf{35} (1986), 321--331.

\bibitem{kleiner} B. Kleiner, J. Lott: \textit{Geometrization of three-dimensional orbifolds via Ricci flow}, Astérisque \textbf{365} (2014), 101--177.

\bibitem{kobayashi} S. Kobayashi: \textit{Transformation Groups in Differential Geometry}, Reprint of the 1972 Edition, Classics in Mathematics, Springer-Verlag, 1972.

\bibitem{lopez} J. López: \textit{The basic component of the mean curvature of riemannian foliations}, Ann. Global Anal. Geom. \textbf{10} (1992), 179--194.

\bibitem{lupercio} E. Lupercio, B. Uribe: \textit{Gerbes over orbifolds and twisted $K$-theory}, Comm. Math. Phys. \textbf{245}(3) (2004), 449--489.

\bibitem{mrcun} I. Moerdijk, J. Mr\v{c}un: \textit{Introduction to Foliations and Lie Groupoids}, Cambridge Studies in Advanced Mathematics \textbf{91}, Cambridge University Press, 2003.

\bibitem{pronk} I. Moerdijk, D. Pronk: \textit{Orbifolds, sheaves and groupoids}, $K$-Theory \textbf{12}(1) (1997), 3--21.

\bibitem{molino3} P. Molino: \textit{Desingularisation des feuilletages riemanniens}, Amer. J. Math. \textbf{106}(5) (1984), pp. 1091--1106.

\bibitem{molino} P. Molino: \textit{Riemannian Foliations}, Progress in Mathematics \textbf{73}, Birkhäuser, 1988.

\bibitem{mozgawa} W. Mozgawa: \textit{Feuilletages de Killing}, Collect. Math. \textbf{36}(3) (1985), 285--290.

\bibitem{oshikiri} G. Oshikiri: \textit{On transverse Killing fields of metric foliations of manifolds with positive curvature}, Manuscripta Math. \textbf{104} (2001), 527---531.

\bibitem{petersen} P. Petersen: \textit{Riemannian Geometry}, Second Edition, Graduate Texts in Mathematics \textbf{171}, Springer, 2006.

\bibitem{puttmann} T. Püttmann, C. Searle: \textit{The Hopf conjecture for manifolds with low cohomogeneity or high symmetry rank}, Proc. Amer. Math. Soc. \textbf{130}(1) (2001), 163--166.

\bibitem{salem} E. Salem: \textit{Riemannian foliations and pseudogroups of isometries}, Appendix D in: \textit{Riemmanian Foliations} by P. Molino, Progress in Mathematics \textbf{73}, Birkhäuser, 1988, 265--296.

\bibitem{satake} I. Satake: \textit{On a generalization of the notion of manifold}, Proc. Natl. Acad. Sci. USA,  \textbf{42}(6) (1956), 359--363.

\bibitem{yeroshkin} D. Yeroshkin: \textit{Riemannian orbifolds with non-negative curvature}, Doctoral Dissertation, University of
Pennsylvania, 2014.

\end{thebibliography}
\end{document}